\newcommand{\bbP}{\mathbb{P}}
\newcommand{\bbF}{\mathbb{F}}
\newcommand{\bbZ}{\mathbb{Z}}
\newcommand{\calM}{\mathcal{M}}
\newcommand{\calS}{\mathcal{S}}
\newcommand{\isomap}{\stackrel{\thicksim}{\longrightarrow}}
\newcommand{\calO}{\mathcal{O}}
\newtheorem{theorem}{Theorem}
\newtheorem{lemma}[theorem]{Lemma}
\newtheorem{proposition}[theorem]{Proposition}
\newtheorem{definition}[theorem]{Definition}
\newtheorem{corollary}[theorem]{Corollary}
\begin{document}


\title[Moduli of bundles II]{Moduli of bundles over rational surfaces and elliptic curves
II: non-simply laced cases }


\author{Naichung Conan Leung}


\address{the Institute of Mathematical Sciences and Department of Mathematics,
         The Chinese University of Hong Kong,
         Shatin, N.T., Hong Kong}

\email{leung@ims.cuhk.edu.hk}


\author{Jiajin Zhang}
\address{Department of Mathematics, Sichuan University, Chengdu, 610000, P.R. China}
\email{jjzhang@scu.edu.cn}
\curraddr{Institute of Mathematics, Johannes Gutenberg-University
Mainz, Mainz, 55099, Germany} \email{zhangji@uni-mainz.de}

\subjclass[2000]{Primary 14J26; Secondary 14H60}




\begin{abstract}
   For any non-simply laced Lie group $G$ and elliptic curve $\Sigma$, we show that the moduli space of
flat $G$ bundles over $\Sigma$ can be identified with the moduli
space of rational surfaces with $G$-configurations which contain
$\Sigma$ as an anti-canonical curve. We also construct
$Lie(G)$-bundles over these surfaces. The corresponding results for
simply laced groups were obtained by the authors in another paper.
Thus we have established a natural identification for these two
kinds of moduli spaces for any Lie group $G$.
\end{abstract}

\maketitle

\section{Introduction}\label{introduction}

     There is a classical identification for the moduli space of flat
$E_n$ bundles over a fixed elliptic curve $\Sigma$ and the moduli
space of del Pezzo surfaces of degree $9-n$ containing $\Sigma$
as an anti-canonical curve, see \cite{Donagi}\cite{Donagi2}\cite{FM1}\cite{FMW1}.
For other simply laced Lie group $G$, we also obtained in \cite{LZ} an identification
for the moduli space of flat $G$ bundles over a fixed
elliptic curve $\Sigma$ and the moduli space of the pairs
$(S,\Sigma)$ where $S$ is a rational surface (called $ADE$-surface in \cite{LZ})
containing $\Sigma$ as an anti-canonical curve. In this paper, we extend the above
identification to non-simply laced cases. Therefore we establish a one-to-one correspondence
between flat $G$ bundles over a fixed elliptic curve $\Sigma$ and rational surfaces with
$\Sigma$ as an anti-canonical curve for Lie groups of {\it all} types.

      A non-simply laced Lie group $G$ is uniquely determined by a
simply laced Lie group $G'$ and its outer automorphism group. Hence
it is natural to apply the previous results for the simply laced cases in \cite{LZ} to the current situation.
Similar to simply-laced cases, we can define $G$-{\it
surfaces} and {\it rational surfaces with $G$-configurations} (see
Definition~\ref{Bn-config}, ~\ref{Cn-config}, ~\ref{G2-config},
~\ref{F4-config}). Our main result is the following:

   \begin{theorem} Let $\Sigma$ be a fixed elliptic curve with
   identity $0\in \Sigma$,  $G$ be any simple, compact, simply
   connected and non-simply laced  Lie group. Denote $\calS(\Sigma,G)$ the  moduli space of the pairs
   $(S,\Sigma)$, where $S$ is a $G$-surface such that $\Sigma\in|-K_S|$. Denote $\calM^G_{\Sigma}$ the moduli space
   of flat $G$-bundles over $\Sigma$. Then we have\par
     (i) $\calS(\Sigma,G)$ can be
   embedded into $\calM^G_{\Sigma}$  as an open dense subset.\par
     (ii) This embedding can be extended to an isomorphism from $\overline{\calS(\Sigma,G)}$ onto $\calM^G_{\Sigma}$
     by including all rational surfaces with $G$-configurations, and this
   gives us a natural and explicit compactification $\overline{\calS(\Sigma,G)}$ of $\calS(\Sigma,G)$.
   \end{theorem}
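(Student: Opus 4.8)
The plan is to reduce the non-simply laced case to the simply laced results of \cite{LZ} through the folding of $G'$ by its outer automorphism. Every non-simply laced $G$ is the fixed-point subgroup $(G')^{\tau}$ of a unique simply laced $G'$ under a generator $\tau$ of its outer automorphism group (of order $2$ for $B_n,C_n,F_4$ and order $3$ for $G_2$). On the bundle side this gives $\check\Lambda_G=(\check\Lambda_{G'})^{\tau}$ for the cocharacter lattices and $W_G\cong (W_{G'})^{\tau}$ for the Weyl groups, so that the Looijenga--Bernstein--Schwarzman presentation $\calM^{G'}_{\Sigma}\cong(\Sigma\otimes_{\bbZ}\check\Lambda_{G'})/W_{G'}$ restricts to $\calM^{G}_{\Sigma}\cong(\Sigma\otimes_{\bbZ}\check\Lambda_{G'})^{\tau}/W_G$. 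On the surface side, a $G$-surface is by Definitions~\ref{Bn-config}--\ref{F4-config} a $G'$-surface carrying a geometric automorphism that realizes $\tau$ on $K_S^{\perp}$, so $\calS(\Sigma,G)$ should be identified with the $\tau$-fixed locus $(\calS(\Sigma,G'))^{\tau}$; the dimensions match, both equalling $\mathrm{rank}(G)$.

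First I would make the \cite{LZ} isomorphism $\Phi'\colon\calS(\Sigma,G')\isomap\calM^{G'}_{\Sigma}$ explicit and check that it is $\tau$-equivariant. Recall that $\Phi'$ sends a $G'$-surface $S$ to the restriction homomorphism $K_S^{\perp}\to\mathrm{Pic}^0(\Sigma)=\Sigma$, an element of $\Sigma\otimes\check\Lambda_{G'}$ well defined modulo $W_{G'}$; since a geometric automorphism of $S$ acts on $K_S^{\perp}$ precisely by the diagram automorphism $\tau$ while fixing the class of $\Sigma=-K_S$, it should intertwine the geometric $\tau$ with the Lie-theoretic $\tau$. Granting this, applying $(-)^{\tau}$ to $\Phi'$ and composing with the two identifications above yields the embedding of part~(i). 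Openness and density are then inherited: the image of $\calS(\Sigma,G')$ is open, dense and $\tau$-invariant in $\calM^{G'}_{\Sigma}$, and the fixed locus of such a subset is open and dense in the ambient fixed locus, which I would verify on the single family of $G$-surfaces.

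For part~(ii) I would \emph{not} merely take fixed loci in the compactification of \cite{LZ}, since fixed loci and closures need not commute and one only has $\overline{(\calS(\Sigma,G'))^{\tau}}\subseteq(\overline{\calS(\Sigma,G')})^{\tau}$ a priori. Instead I would extend the embedding directly along the boundary, assigning to each rational surface with a $G$-configuration the flat $G$-bundle produced by the same restriction-to-$\Sigma$ recipe, now with the configuration curves allowed to degenerate. Surjectivity onto all of $\calM^{G}_{\Sigma}$ would then be checked against the explicit target: by Looijenga's theorem $\calM^{G}_{\Sigma}$ is an explicit weighted projective space attached to $G$, and I would confirm, type by type for $B_n,C_n,F_4,G_2$, that the classified $G$-configurations account for exactly the non-generic strata missing from the open dense image, with the simply laced marks replaced by their folded values.

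The hard part will be the compatibility asserted in the second paragraph together with the boundary bookkeeping of the third. Concretely, one must prove that the automorphism built into the definition of a $G$-surface acts on $K_S^{\perp}\cong\check\Lambda_{G'}$ by the very diagram automorphism that folds $G'$ to $G$, and that its induced action on $\mathrm{Pic}^0(\Sigma)$ is trivial, so that the restriction data genuinely lands in the $\tau$-invariants and $\Phi'$ becomes equivariant. The remaining difficulty is to upgrade the boundary bijection of part~(ii) to an isomorphism of moduli spaces by controlling the scheme structure at those points where several exceptional curves are interchanged and collide under the folding; this is where the non-simply laced geometry genuinely differs from the $ADE$ situation of \cite{LZ}.
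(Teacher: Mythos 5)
Your overall theme (folding $G=(G')^{\tau}$ and reducing to \cite{LZ}) is the same as the paper's, but the central mechanism of your part (i) is wrong for three of the four cases. You assert that, by the paper's definitions, a $G$-surface is a $G'$-surface carrying a geometric automorphism realizing $\tau$ on $K_S^{\perp}$, and hence that $\calS(\Sigma,G)$ is the fixed locus $(\calS(\Sigma,G'))^{\tau}$. Neither claim holds. The definitions impose closed conditions on the blown-up points ($x_1=0$ for $B_n$; $x_i+x_{2n+1-i}=0$ for $C_n$; $x_1=0$ and $x_4=x_2+x_3$ for $G_2$), not the existence of any automorphism of $S$; the folding identification exists only after restriction to $\Sigma$, as an isomorphism of line bundles there (e.g.\ $\mathcal{S}^{+}\otimes\calO(-l_1)\cong\mathcal{S}^{-}$). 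More seriously, these conditions are exactly degenerations forbidden by general position for $G'$: by Corollaries~\ref{Bn-bundle}, \ref{Cn-bundle}, \ref{G2-bundle}, the connected component of $(\calM_{\Sigma}^{G'})^{Out(G')}$ containing the trivial bundle --- which is the copy of $\calM_{\Sigma}^{G}$ singled out in Corollary~\ref{fixed-part} --- pulls back under $\phi$ entirely into the boundary $\overline{\calS(\Sigma,G')}\setminus\calS(\Sigma,G')$. So for $B_n$, $C_n$, $G_2$ the component of the fixed locus you need meets $\calS(\Sigma,G')$ in the empty set, and your inheritance claim (``the fixed locus of an open, dense, invariant subset is open and dense in the ambient fixed locus'') fails; it is already false for $z\mapsto -z$ on $\mathbb{C}\supset\mathbb{C}^{*}$, and it fails here for precisely that reason. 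Only for $F_4$ is a $G$-surface genuinely a $G'$-surface. Relatedly, $\calM^{G}_{\Sigma}\cong(\Sigma\otimes_{\bbZ}\check\Lambda_{G'})^{\tau}/W_G$ is not correct as stated: the $\tau$-fixed part has several connected components (four for $B_n$, indexed by $2x_1=0$, etc.), and one needs Lemma~\ref{inv-component} together with the choice of the component through the trivial bundle to force $x_1=0$ rather than merely $2x_1=0$.

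What the paper does instead, and what your outline is missing, is a boundary analysis: it defines $G$-surfaces by a new, $G$-adapted notion of general position that allows exactly the degenerations above, so $\calS(\Sigma,G)$ lives inside boundary strata of $\overline{\calS(\Sigma,G')}$ rather than inside $\calS(\Sigma,G')$; it proves that $W(G)$ acts simply transitively on $G$-exceptional systems (Lemmas~\ref{W(Bn)action}, \ref{W(Cn)action}, \ref{W(G2)action}, \ref{W(F4)action}), which is what makes the restriction map $\calS(\Sigma,G)\rightarrow Hom(\Lambda(G),\Sigma)/W(G)$ well defined without any equivariance statement; and it gets injectivity plus the extension in (ii) by restricting an explicit simple root system of $G$ to $\Sigma$ and checking the resulting linear system over $\Sigma$ has a unique solution (Propositions~\ref{Bn-moduli}, \ref{Cn-moduli}, \ref{G2-moduli}, \ref{F4-moduli}), with surjectivity coming from Corollaries~\ref{Bn-bundle}--\ref{G2-bundle} rather than from Looijenga's theorem, which the paper invokes only as a concluding remark. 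The difficulties you flag at the end (equivariance of $\Phi'$, scheme structure where exceptional curves collide) are not where the actual work lies.
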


      This paper is motivated by some duality in physics.
   When $G=E_n$ is a simple subgroup of $E_8\times E_8$, these $G$ bundles are related
   to the duality between  $F$-theory and string theory. Among other things, this duality predicts the moduli of
   flat $E_n$ bundles over a fixed elliptic curve $\Sigma$ can be identified with the
   moduli of del Pezzo surfaces with fixed anti-canonical curve $\Sigma$. For details, one can consult  \cite{Donagi}
   \cite{Donagi2}  \cite{FMW1}. Our result can be considered as a test of above duality for other Lie groups.\par

      As an application, this identification provides us with an intuitive explanation
  for $\calM^G_{\Sigma}$. We also provide an interesting geometric realization of
  root system theory, and we can see very clearly how the Weyl group acts on the (marked)
  moduli space of flat $G$-bundles over $\Sigma$.

      In the following, we illustrate briefly via pictures what $G$-configurations and $G$-surfaces are
   in each case and compare it  with the corresponding case where $G'$ is simply-laced.

\subsection{$B_n$-configurations as special
$D_{n+1}$-configurations}\label{Bn-intro}
      In these cases we consider rational surfaces with fibration structure and
   a fixed smooth anti-canonical curve $\Sigma$.
      A $B_n$-configuration comes from a $D_{n+1}$-configuration. Roughly speaking,
      by saying a rational surface $S$ has a $D_{n+1}$-configuration
      $(l_1,\cdots,l_{n+1})$, we mean that $S$ can be
      considered as a blow-up of $\bbF_1$ (a {\it Hirzebruch
      surface}) at $n+1$ points on $\Sigma\in |-K_{\bbF_1}|$, such
      that $l_1,\cdots,l_{n+1}$ are the corresponding exceptional
      classes \cite{LZ}. When these blown up points are {\it in general position}, $S$ is called a
      $G=D_{n+1}$-surface.
      See Figure 1 for a surface with a $D_{n+1}$-configuration.

         Given a surface $S$ with a $D_{n+1}$-configuration $\zeta=(l_1,\cdots,l_{n+1})$,
      if  it satisfies the condition $x_1=l_1\cap\Sigma$ is the identity element $0$ of the elliptic curve $\Sigma$,
      then $\zeta$ is a $B_n$-configuration on
      $S$ (Definition~\ref{Bn-config}). If all blown up points but $x_1$ are {\it in general position}, $S$ is called a
      $B_n$-surface. See Figure 2 for a surface with a $B_n$-configuration.

\begin{figure}[H]\label{figure-D}
\centering
\includegraphics{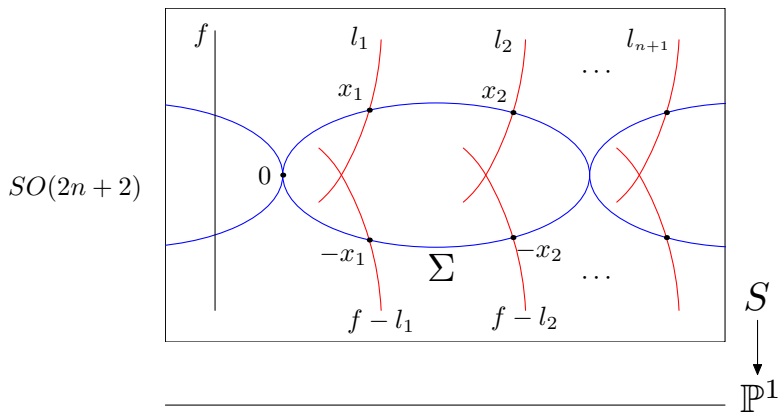}
\caption{A surface with a $D_{n+1}$-configuration $(l_1,\cdots,l_{n+1})$}
\end{figure}

\begin{figure}[H]
\centering
\includegraphics{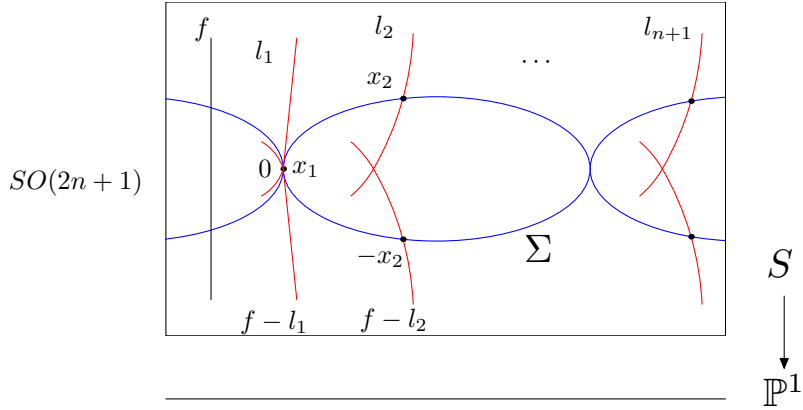}
\caption{A surface with a $B_n$-configuration $(l_1,l_2,\cdots,l_{n+1})$, where $x_1=l_1\cap\Sigma=0$}
\end{figure}

\subsection{$C_n$-configurations as special
$A_{2n-1}$-configurations}\label{Cn-intro}

         In these cases, we consider rational surfaces with fibration and section structure
      and a fixed smooth anti-canonical curve $\Sigma$.\par
      A $C_n$-configuration comes from an $A_{2n-1}$-configuration.
      By saying a rational surface $S$ has an $A_{2n-1}$-configuration
      $(l_1,\cdots,l_{2n})$, we mean that $S$ can be
      considered as a blow-up of $\bbF_1$  at $2n$ points on $\Sigma\in |-K_{\bbF_1}|$  which sum to zero, such
      that $l_1,\cdots,l_{2n}$ are the corresponding exceptional
      classes \cite{LZ}. When these blown up points are {\it in general position}, $S$ is called an
      $A_{2n-1}$-surface.
      See Figure 3 for a surface with an $A_{2n-1}$-configuration.

\begin{figure}[H]
\centering
\includegraphics{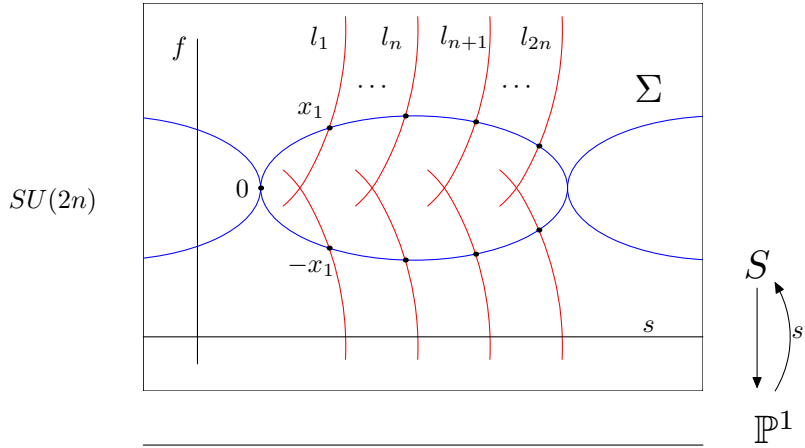}
\caption{A surface with an $A_{2n-1}$-configuration $(l_1,\cdots,l_{2n})$}
\end{figure}

         Given a surface $S$ with an $A_{2n-1}$-configuration $\zeta=(l_1,\cdots,l_{2n})$,
      if  it satisfies the condition $x_i=-x_{2n+1-i}$ with $x_i=l_i\cap\Sigma$, for $i=1,\cdots,n$,
      then $\zeta$ is called a $C_n$-configuration on
      $S$ (Definition~\ref{Cn-config}). If all blown up points are {\it in general position}, $S$ is called a
      $C_n$-surface. See Figure 4 for a surface with a $C_n$-configuration.

\begin{figure}[H]
\centering
\includegraphics{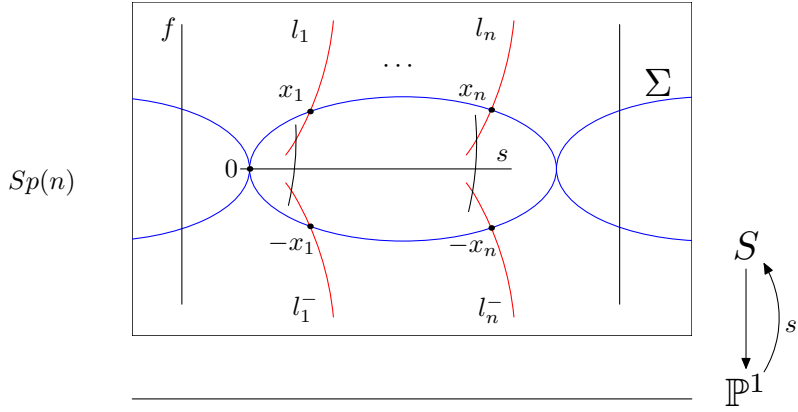}
\caption{A surface with a $C_n$-configuration $(l_1,\cdots,l_n,l_n^-,\cdots,l_1^-)$}
\end{figure}

\subsection{$G_2$-configurations as special
$D_{4}$-configurations}\label{G2-intro}

     In these cases we still consider rational surfaces with fibration
structure and a fixed smooth anti-canonical curve
$\Sigma$.\par
      A $G_2$-configuration comes from a $D_{4}$-configuration. We have seen what a $D_4$-configuration is
      from Subsection~\ref{Bn-intro}. Roughly speaking,
      by saying a rational surface $S$ has a $D_{4}$-configuration
      $(l_1,\cdots,l_{4})$, we mean that $S$ can be
      considered as a blow-up of $\bbF_1$  at $4$ points on $\Sigma\in |-K_{\bbF_1}|$, such
      that $l_1,\cdots,l_{4}$ are the corresponding exceptional
      classes \cite{LZ}. When these blown up points are {\it in general position}, $S$ is called a
      $G=D_{4}$-surface.
      See Figure 5 for a surface with a $D_{4}$-configuration.

\begin{figure}[H]
\centering
\includegraphics{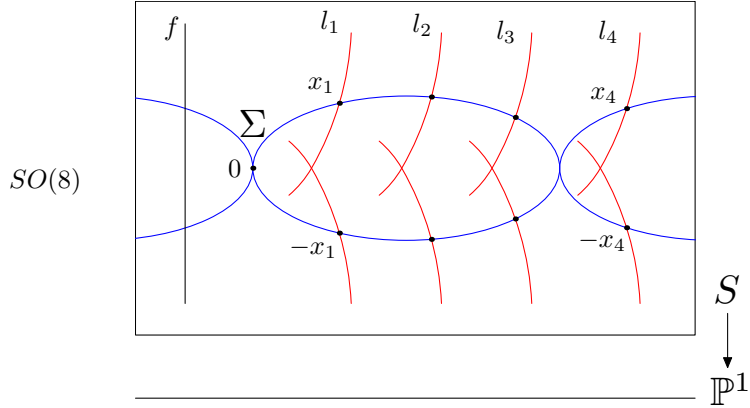}
\caption{A surface with a $D_4$-configuration $(l_1,\cdots,l_4)$}
\end{figure}

         Given a surface $S$ with a $D_{4}$-configuration $\zeta=(l_1,\cdots,l_{4})$,
      if  it satisfies these two conditions $x_1=0$ and $x_4=x_2+x_3$, where $x_i=l_i\cap\Sigma$,
      then $\zeta$ is called a $G_2$-configuration on
      $S$ (Definition~\ref{G2-config}). If all blown up points but $x_1$ are {\it in general position}, $S$ is called a
      $G_2$-surface. See Figure 6 for a surface with a $G_2$-configuration.

\begin{figure}[H]
\centering
\includegraphics{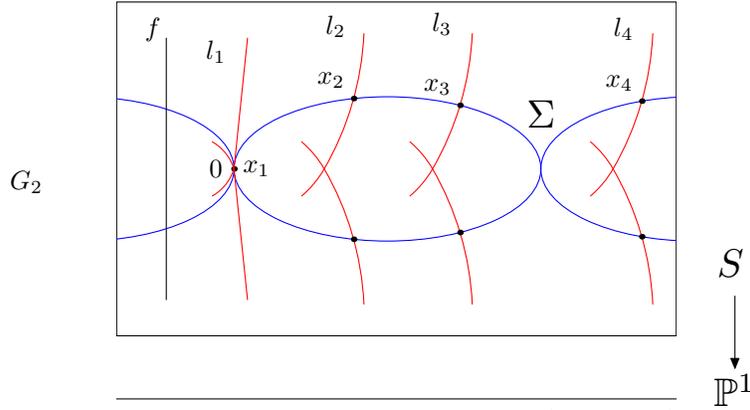}
\caption{A surface with a $G_2$-configuration $(l_1,l_2,l_3,l_4)$,
where $x_1=0$ and $x_4=x_2+x_3$ with $x_i=l_i\cap \Sigma$}
\end{figure}

\subsection{$F_4$-configurations as special
$E_{6}$-configurations}\label{F4-intro}

     In these cases we consider rational surfaces which are blow-ups of  the projective plane $\bbP^2$
     at $6$ points in almost general position, and which contain a fixed smooth anti-canonical curve
$\Sigma$ \cite{LZ}.\par
      An $F_4$-configuration comes from an $E_6$-configuration.
      Recall that by saying a rational surface $S$ has an $E_6$-configuration
      $(l_1,\cdots,l_{6})$, we mean that $S$ can be
      considered as a blow-up of $\bbP^2$  at $6$ points on $\Sigma\in |-K_{\bbP^2}|$, such
      that $l_1,\cdots,l_{6}$ are the corresponding exceptional
      classes. When these blown up points are {\it in general position}, $S$ is called an
      $E_6$-surface, which is in fact a cubic surface.
      See Figure 7 for a surface with an $E_6$-configuration.

\begin{figure}[H]
\centering
\includegraphics{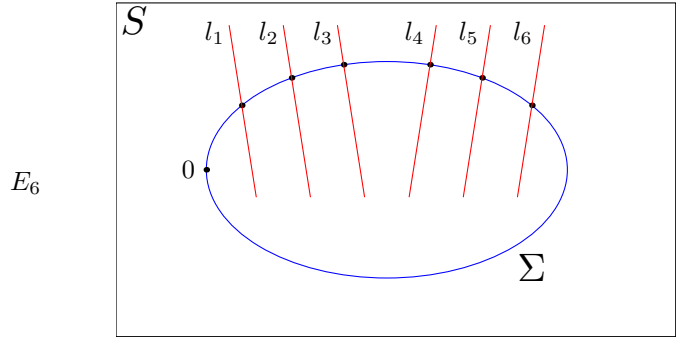}
\caption{A surface with an $E_6$-configuration $(l_1,\cdots,l_6)$}
\end{figure}

         Given a surface $S$ with an $E_6$-configuration $\zeta=(l_1,\cdots,l_{6})$,
      if  it satisfies the condition $x_1+x_6=x_2+x_5=x_3+x_4$, where $x_i=l_i\cap\Sigma$,
      then $\zeta$ is called an $F_4$-configuration on
      $S$ (Definition~\ref{F4-config}). If all blown up points are {\it in general position}, $S$ is called an
      $F_4$-surface. See Figure 8 for a surface with an $F_4$-configuration.

\begin{figure}[H]
\centering
\includegraphics{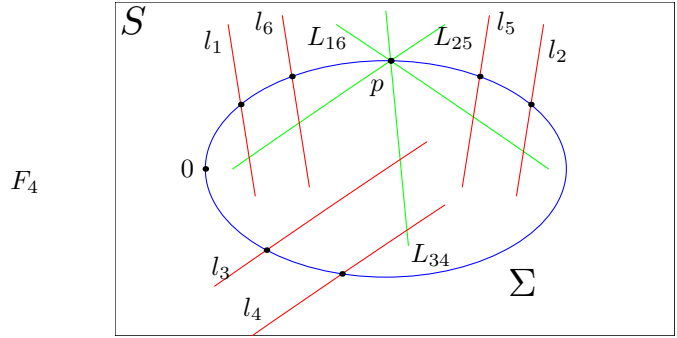}
\caption{A surface with an $F_4$-configuration $(l_1,\cdots,l_6)$
where three lines $L_{16}, L_{25}, L_{34}$ meet at $p\in\Sigma$,
or equivalently, $x_1+x_6=x_2+x_5=x_3+x_4$ with $x_i=l_i\cap\Sigma$}
\end{figure}

      Moreover, we can construct  $\mathcal {G}$$=\mbox{Lie}(G)$
   bundles over $S$ with a $G$-configuration. By restriction, we obtain $\mbox{Lie}(G)$ bundles over $\Sigma$.
   And we can also constructed some natural fundamental representation
   bundles over $\Sigma$ which have interesting geometric meanings, such that the Lie algebra
   bundles are the automorphism bundles of these
   representation bundles preserving certain algebraic structures.

In this paper, the notations are the same as those in \cite{LZ}. Let
$G$ be a compact, simple and simply-connected Lie group. We
denote\par
   $r(G)$: the rank of $G$;\par
   $R(G)$: the root system;\par
   $R_c(G)$: the coroot system;\par
   $W(G)$: the Weyl group;\par
   $\Lambda(G)$: the root lattice;\par
   $\Lambda_c(G)$: the coroot lattice;\par
   $\Lambda_w(G)$: the weight lattice;\par
   $T(G)$: a maximal torus;\par
   $ad(G)$: the adjoint group of $G$, i.e. $G/C(G)$ where $C(G)$
   is the center of $G$;\par
   $\Delta(G)$: a simple root system of $G$.\par
   $Out(G)$: the outer automorphism group of $G$, which is defined as the quotient of the automorphism group of $G$ by
    its inner automorphism group. It is well-known that $Out(G)$ is isomorphic to the diagram
   automorphism group of the Dynkin diagram of $G$.\par
      When there is no confusion, we just ignore  the letter $G$.

\section{Reductions to simply laced cases}\label{non-simply->simply}
     Let $G$ be any simple, compact and simply
connected Lie group. Then $G$ is classified into the following $7$
types according to its Lie algebra. \par
   (1) $A_n$-type, $G=SU(n+1)$;\par
   (2) $B_n$-type, $G=Spin(2n+1)$;\par
    (3) $C_n$-type, $G=Sp(n)$;\par
    (4) $D_n$-type, $G=Spin(2n)$;\par
    (5) $E_n$-type, $n=6,7,8$;\par
    (6) $F_4$-type;\par
    (7) $G_2$-type.\par
  Among these, $A_n,D_n$ and $E_n$ are called of simply laced
type, while $B_n,C_n,F_4$ and $G_2$ are called of non-simply laced
type. And $A_n,B_n,C_n,D_n$ are called classical Lie groups, while
  $E_n,F_4$ and $G_2$ are called exceptional Lie groups. \\

   From now on, we always assume that  $G$ is a compact,
simple, simply-connected Lie group of non-simply laced type, that
is, of type $B_n,C_n,F_4,G_2$. There are two natural approaches to reduce situations to simply laced cases.
One is embedding $G$ into a simply laced Lie group $G'$ such that $G$ is
the subgroup fixed by the outer automorphism group of $G'$. Another
 is taking the simply laced subgroup $G''$ of maximal rank. \par
    In the following we explain the first reduction. The following result is
well-known.

\begin{proposition}\label{reduction-I} Let $G$ be a compact, non-simply laced, simple, and simply
connected Lie group. There exists a simple, simply connected and
simply laced Lie group $G'$, s.t. $G\subset G'$ and $G=(G')^{\rho}$,
where $\rho$ is an outer automorphism of $G'$ of order $3$ for
$G'=D_4$, and of order $2$ otherwise.
\end{proposition}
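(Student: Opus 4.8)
The plan is to establish this correspondence by invoking the classical theory of folding of Dynkin diagrams. Each non-simply laced type arises from a simply laced type by a diagram automorphism, and I would exhibit $G'$ and $\rho$ explicitly in each of the four cases. Concretely, I would take $G'$ to be of type $D_{n+1}$ for $G$ of type $B_n$, of type $A_{2n-1}$ for $G$ of type $C_n$, of type $E_6$ for $G$ of type $F_4$, and of type $D_4$ for $G$ of type $G_2$; these are precisely the pairings announced in the introduction. In each case $\rho$ is the automorphism induced by the nontrivial symmetry of the Dynkin diagram of $G'$, which has order $2$ in the first three cases and order $3$ for the triality automorphism of $D_4$.

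First I would recall that for a simply laced simple Lie group $G'$ that is simply connected, any diagram automorphism lifts to an honest automorphism $\rho$ of $G'$, and that the subgroup $(G')^\rho$ of fixed points is again connected, simple and simply connected. The key computation is to identify the root system of $(G')^\rho$ with the folded root system. I would fix a $\rho$-invariant maximal torus $T(G') \subset G'$ and a $\rho$-invariant set of simple roots $\Delta(G')$; averaging over the $\rho$-orbits on $\Delta(G')$ produces the simple roots of the fixed-point group, and a direct check of the resulting Cartan integers shows they realize the Dynkin diagram of $G$. For $D_{n+1} \to B_n$ the two branch-node legs are swapped and collapse to the short root; for $A_{2n-1} \to C_n$ the linear diagram folds about its center; for $E_6 \to F_4$ the diagram folds about its central axis; and for $D_4 \to G_2$ the three outer legs are cyclically permuted by $\rho$ of order $3$, collapsing to the short root of $G_2$. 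This case-by-case verification of Cartan matrices is the technical core, though each computation is short and standard.

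The main obstacle, and the reason the proposition is stated rather than merely cited, is the bookkeeping around the global group structure as opposed to the Lie algebra: one must confirm that the fixed-point subgroup is simply connected (not merely its Lie algebra of the asserted type) and that $\rho$ is genuinely outer rather than inner. I would handle outerness by noting that $\rho$ acts nontrivially on $\Delta(G')$, hence on the center and on $\Lambda_w/\Lambda$, so it cannot be realized by conjugation by an element of $T(G')$; and I would obtain simple connectivity of $(G')^\rho$ from the standard structure theory of fixed-point groups of semisimple automorphisms on simply connected groups. Since the proposition is explicitly labelled well-known, the cleanest route is to assemble these ingredients and refer to the standard literature on diagram folding for the detailed verification, presenting the four explicit foldings as the substance of the statement.
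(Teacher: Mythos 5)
Your proposal is correct and takes essentially the same approach as the paper: the same four foldings ($B_n\subset D_{n+1}$, $C_n\subset A_{2n-1}$, $F_4\subset E_6$, $G_2\subset D_4$) by the diagram automorphism, with the folded simple roots given by orbit averages and the detailed verification deferred to standard references --- the paper's own proof simply reduces to the Lie algebra level ``by the functorial property,'' cites Kac, and lists the same explicit root data in the remark that follows. If anything your sketch is more careful about the group-level points (connectedness and simple connectivity of $(G')^{\rho}$, outerness of $\rho$) that the paper's one-line reduction glosses over; the only caution is that $\rho$ must preserve a full pinning (i.e.\ permute the Chevalley generators), since invariance of the maximal torus and of the set of simple roots alone does not determine the fixed-point subgroup (e.g.\ $\mathfrak{so}_{2n}$ versus $\mathfrak{sp}_{2n}$ inside $\mathfrak{sl}_{2n}$), and it is the pinned automorphism for which the averaging computation identifies $(G')^{\rho}$ with $G$.
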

 \begin{proof} By the functorial property, we just need to prove it in the Lie algebra
level. For the construction of $\mathcal{G}=Lie(G)$ and
$\mathcal{G}'=Lie(G')$, one can see \cite{Kac} for the details,
where the construction of Lie algebras is determined by the
construction of root systems. \end{proof}

{\bf Remark} \label{root-sys} {\rm For later use, we list the construction of
non-simply laced root systems via simply laced root systems.}

\begin{enumerate}
\item $G=C_n=Sp(n)$, $G'=A_{2n-1}=SU(2n)$.\\
$\Delta(G')=\{\alpha_i, i=1,\cdots,2n-1\}$. \\ $Out(G')=\{1,\rho
\}\cong \bbZ_2$, where $\rho(\alpha_i)=\alpha_{2n-i},i=1,\cdots,n-1$, $\rho(\alpha_n)=\alpha_n$.\\
$\Delta(G)=\{\beta_i=\frac{1}{2}(\alpha_i+\alpha_{2n-i}),i=1,\cdots,n-1,\beta_n=\alpha_n\}$.\\

\item $G=B_n=Spin(2n+1)$, $G'=D_{n+1}=Spin(2n+2)$.\\
$\Delta(G')=\{\alpha_i, i=1,\cdots,n+1\}$. \\ $Out(G')=\{1,\rho \}\cong\bbZ_2$,
where $\rho(\alpha_i)=\alpha_i,i=3,\cdots,n+1$,
$\rho(\alpha_1)=\alpha_2$, $\rho(\alpha_2)=\alpha_1$.\\
$\Delta(G)=\{\beta_1=\frac{1}{2}(\alpha_1+\alpha_2),\beta_i=\alpha_{i+1},i=2,\cdots,n\}$.\\

\item $G=F_4$, $G'=E_6$.\\ $\Delta(G')=\{\alpha_i, i=1,\cdots,6\}$.\\ $Out(G')=\{1,\rho \}\cong\bbZ_2$,  where
$\rho(\alpha_i)=\alpha_{6-i},i=1,\cdots,5$,  and $\rho(\alpha_6)=\alpha_6$.\\
$\Delta(G)=\{\beta_1=\frac{1}{2}(\alpha_1+\alpha_5),\beta_2=\frac{1}{2}(\alpha_2+\alpha_4),
\beta_3=\alpha_3,\beta_4=\alpha_6\}$.\\

\item $G=G_2$, $G'=D_4=Spin(8)$.\\ $\Delta(G')=\{\alpha_i,
i=1,\cdots,4\}$. \\ $Out(G')=\langle\rho_1,\rho_2 \rangle\cong S_3$,  where $\rho_1$
interchanges $\alpha_1$ and $\alpha_2$, and $\rho_2$ interchanges
$\alpha_1$ and $\alpha_4$.\\
$\Delta(G)=\{\beta_1=\frac{1}{3}(\alpha_1+\alpha_2+\alpha_4),\beta_2=\alpha_3\}$.\\
\end{enumerate}
  The Dynkin diagrams of $G$ and $G'$ are listed in Figure 9.

\begin{figure}[H]
\centering
\includegraphics{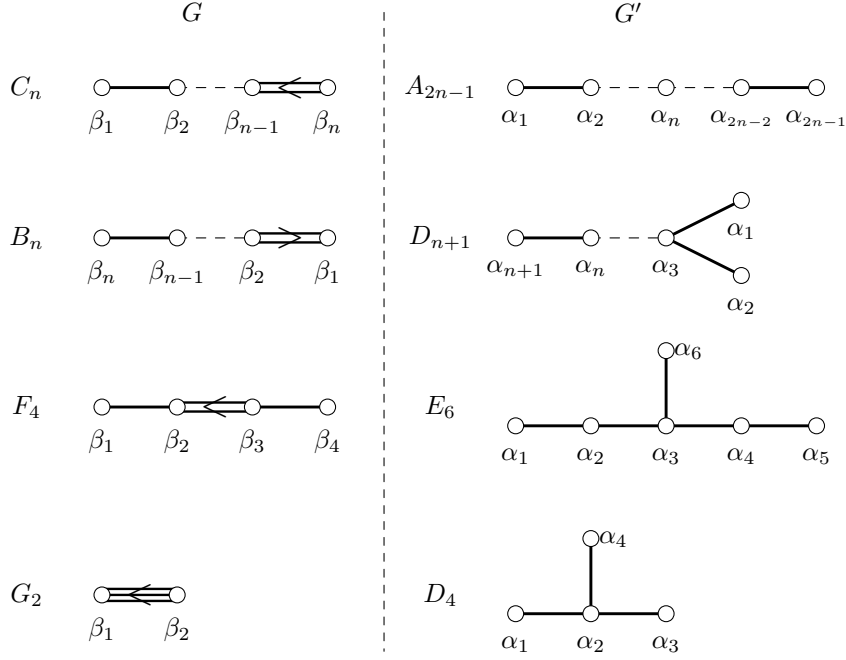}
\caption{Non-simply laced $G$ reduced to simply laced $G'$}
\end{figure}

{\bf Remark} \label{Weyl-group} {\rm  Note that $W(G)$ is the subgroup of $W(G')$ fixing the root
system
  $R(G)$, and also the subgroup  pointwise fixed by $Out(G')$. For a root $\alpha$,
  let $S_{\alpha}\in W(G)$ be the reflection with respect to $\alpha$, that is, $S_{\alpha}(x)=x+(x,\alpha)\alpha$.
  Thus as a subgroup of $W(A_{2n-1})$,
$W(C_n)$ is generated by $S_{\alpha_i}\circ S_{\alpha_{2n-i}}$ for
$i=1,\cdots,n-1$ and $S_{\alpha_n}$. As a subgroup of $W(D_{n+1})$,
$W(B_n)$ is generated by $S_{\alpha_1}\circ S_{\alpha_2}$ and
$S_{\alpha_i}$ for $i=3,\cdots,n+1$. As a subgroup of $W(E_{6})$,
$W(F_4)$ is generated by $S_{\alpha_1}\circ S_{\alpha_5}$,
$S_{\alpha_2}\circ S_{\alpha_4}$, $S_{\alpha_3}$ and $S_{\alpha_6}$.
As a subgroup of $W(D_{4})$, $W(G_2)$ is generated by
$S_{\alpha_1}\circ S_{\alpha_2}\circ S_{\alpha_4}$ and
$S_{\alpha_3}$.}

  In the following we let $\Sigma$ be a fixed elliptic curve with identity element
$0$, and we fix a primitive $d^{th}$ root of $Jac(\Sigma) \cong \Sigma$, where $d=2$ for $D_n$
case, $d=9-n$ for $E_n$ case,  and $d=n+1$ for $A_n$ case, respectively (see \cite{LZ}).
Recall that for any compact, simple and simply-connected Lie group
$H$, the moduli space of flat $H$ bundles over $\Sigma$ is
$$\mathcal{M}_{\Sigma}^{H}\cong (\Lambda_c(H)\otimes \Sigma)/W(H).$$

\vspace{0.3cm}
  For $G'$, the group $Out(G')$ acts on
  $$(\Lambda_c(G')\otimes \Sigma)/W(G')$$ naturally. Let $((\Lambda_c(G')\otimes
\Sigma)/W(G'))^{Out(G')}$ be the fixed part. \par
  Then we have a natural map $$\chi:(\Lambda_c(G)\otimes \Sigma)/W(G)\rightarrow ((\Lambda_c(G')\otimes
\Sigma)/W(G'))^{Out(G')}.$$ The image of $\chi$ is contained in a connected
component of the fixed part.

\begin{lemma}\label{inv-component} The map $$\chi: (\Lambda_c(G)\otimes
\Sigma)/W(G)\rightarrow ((\Lambda_c(G')\otimes
\Sigma)/W(G'))^{Out(G')}$$ is injective. \end{lemma}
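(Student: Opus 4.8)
The plan is to turn the injectivity of $\chi$ into a purely combinatorial comparison of $W(G')$- and $W(G)$-orbits on a subtorus, and then to settle that comparison by a folded-fundamental-domain argument, with the torsion of $\Sigma$ being the delicate point. Write $\Gamma=Out(G')$. Using the data recorded in the Remark following Proposition~\ref{reduction-I}, each simple root $\beta_i$ of $G$ is (up to the normalizing factor $\tfrac1{|\text{orbit}|}$) the sum over a $\Gamma$-orbit of simple roots of $G'$; dualizing and using that $G'$ is simply laced, each simple coroot $\beta_i^{\vee}$ is exactly the sum of the coroots in the corresponding $\Gamma$-orbit, so $\beta_i^{\vee}\in\Lambda_c(G')^{\Gamma}$. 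A short computation in the four cases, in the standard $\epsilon$-coordinates on $\Lambda_c(A_{2n-1}),\Lambda_c(D_{n+1}),\Lambda_c(E_6),\Lambda_c(D_4)$, shows that these orbit sums generate the whole invariant sublattice, i.e. $\Lambda_c(G)=\Lambda_c(G')^{\Gamma}$. Because $\Lambda_c(G')/\Lambda_c(G')^{\Gamma}$ is torsion free, tensoring with $\Sigma$ keeps the inclusion injective, and the image of $\Lambda_c(G)\otimes\Sigma$ is the identity component of the fixed locus $(\Lambda_c(G')\otimes\Sigma)^{\Gamma}$ (the other components, governed by the torsion of $H^{1}(\Gamma,\Lambda_c(G'))$, are what forces $\chi$ into a single component). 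Finally $W(G)=C_{W(G')}(\Gamma)$ by the Remark on Weyl groups, and this subgroup preserves the subtorus.

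With these identifications, $\chi(\bar x)=\chi(\bar y)$ means precisely that the images $x,y\in\Lambda_c(G)\otimes\Sigma$ lie in one $W(G')$-orbit, whereas $\bar x=\bar y$ means they lie in one $W(G)$-orbit. So the content of the lemma is the orbit statement: if $x,y$ lie in the subtorus $\Lambda_c(G)\otimes\Sigma$ and $y=w'x$ for some $w'\in W(G')$, then $y=wx$ for some $w\in W(G)=C_{W(G')}(\Gamma)$. There is a clean formal reduction: the set $T=\{w'\in W(G'):w'x=y\}$ is a torsor under $\mathrm{Stab}_{W(G')}(x)$, and since $\Gamma$ acts trivially on the invariant subtorus we have $\gamma x=x$ and $\gamma y=y$, so conjugation $w'\mapsto\gamma w'\gamma^{-1}$ preserves $T$; a $\Gamma$-fixed point of this action is exactly an element of $T\cap W(G)$. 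When $\mathrm{Stab}_{W(G')}(x)$ has order prime to $|\Gamma|$ such a fixed point exists by a standard coprimality argument, so the generic points (those with trivial stabilizer) are immediate, and only the special $x$ need real work.

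To treat all $x$ uniformly I would pass to the universal cover, writing $\Lambda_c(G')\otimes\Sigma=(\Lambda_c(G')\otimes\mathbb{C})/(\Lambda_c(G')\oplus\tau\Lambda_c(G'))$, so that $(\Lambda_c(G')\otimes\Sigma)/W(G')$ is the quotient of $\Lambda_c(G')\otimes\mathbb{C}$ by the group generated by $W(G')$ and the two period lattices. One fixes a $\Gamma$-stable fundamental domain of alcove type for this group (possible because $\Gamma$ acts as a diagram automorphism and so permutes the relevant walls) and checks that its intersection with the fixed subspace $\Lambda_c(G)\otimes\mathbb{C}$ is a fundamental domain for the analogous group built from $W(G)$ and the period lattices of $\Lambda_c(G)$. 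This is the familiar principle that folding a Weyl chamber along a diagram automorphism produces the chamber of the folded system, transported to the elliptic setting. Granting it, any $W(G')$-orbit meeting the subtorus meets this common fundamental domain in a single point, which forces that orbit to be one $W(G)$-orbit, and injectivity follows.

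The main obstacle is exactly the previous step at the torsion points of $\Sigma$ — the $2$-torsion in the $B_n,C_n,F_4$ cases and the $3$-torsion in the $G_2$ case — where stabilizers jump, the coprimality used above breaks down, and $(\Lambda_c(G')\otimes\Sigma)^{\Gamma}$ becomes disconnected; there one must rule out any element of $W(G')\setminus W(G)$ creating an extra coincidence between two $\Gamma$-fixed points lying over such torsion. I expect the most transparent way to discharge this is case by case with the explicit models. For example, in the $C_n$ case the source is $n$-tuples of points of $\Sigma$ modulo signed permutations and the target is unordered $2n$-tuples summing to zero that are invariant under negation, and injectivity collapses to the elementary fact that a negation-symmetric multiset decomposes uniquely into pairs $\{s,-s\}$ (the $2$-torsion points entering with even multiplicity, so that $(s,-s)$ and $(s,s)$ are identified only through the sign change already present in $W(C_n)$). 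The $B_n$, $F_4$ and $G_2$ cases admit parallel symmetric-function descriptions, and I would verify the torsion points in each of them directly.
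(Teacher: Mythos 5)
Your opening reduction is exactly the paper's: both proofs reduce the lemma to the orbit statement that two points of the subtorus $\Lambda_c(G)\otimes\Sigma$ which are $W(G')$-conjugate must already be $W(G)$-conjugate, and your supporting observations (that $\Lambda_c(G)=\Lambda_c(G')^{\Gamma}$ with torsion-free quotient, that $W(G)$ is the centralizer of $\Gamma$ in $W(G')$, and the torsor/coprimality argument for points with stabilizer of order prime to $|\Gamma|$) are correct. The problem is that neither of the two devices you invoke to handle the remaining points actually closes the argument. The ``folded fundamental domain'' principle is circular as you state it: if $D$ is a $\Gamma$-stable strict fundamental domain for $W(G')\ltimes(\Lambda_c(G')\oplus\tau\Lambda_c(G'))$, then strictness plus $\Gamma$-stability does show that any $W(G')$-orbit meeting the fixed subspace meets $D$ in a $\Gamma$-fixed point; but the statement you need --- that the group element moving a given $\Gamma$-fixed point into $D$ can be chosen inside the folded group $W(G)\ltimes(\Lambda_c(G)\oplus\tau\Lambda_c(G))$ --- is precisely the orbit statement being proved. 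In the finite and affine settings folding is a theorem because roots of $G'$ restrict to roots of $G$ on the fixed subspace and one has chamber/alcove combinatorics; the elliptic Weyl group is not a reflection group in that sense, there is no alcove theory to appeal to, and your ``one checks'' is exactly the content of the lemma.

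The case-by-case fallback is then only genuinely executed for $C_n$. That case is clean because $\Lambda_c(C_n)=\mathbb{Z}^n$ and the $A_{2n-1}$ root lattice has torsion-free quotient in $\mathbb{Z}^{2n}$, so orbits really are negation-symmetric multisets and your uniqueness-of-pairing argument works. For $B_n$ and $G_2$ the ambient lattice is a $D$-type root lattice, an index-two sublattice of $\mathbb{Z}^{m}$, so the natural map from $\Lambda_c(D_m)\otimes\Sigma$ to $\Sigma^{m}$ is an isogeny whose kernel is the $2$-torsion of $\Sigma$: the multiset picture only sees the $SO$-quotient and discards exactly the $2$-torsion data where you yourself locate the difficulty, so a further fiberwise analysis over this isogeny is required (it can be made to work, but it is not in your text, and for $G_2$ the triality automorphism is not a coordinate permutation, so nothing is ``parallel'' there). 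Most seriously, $F_4\subset E_6$ admits no symmetric-function model at all; this is the one case the paper itself cannot do by inspection and settles either by computer or by the geometry of the $27$ lines --- double-sixes, triangles, and the identification of $W(F_4)$ as the stabilizer of a triangle in Section~\ref{E6}. Your proposal offers no substitute for that verification, so the hardest case of the lemma remains unproved.
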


\begin{proof} It suffices to prove that for any $x,y\in \Lambda(G)\otimes
\Sigma$, if $\exists\ w'\in W(G')$, such that $w'(x)=y$, then
$\exists\ w\in W(G)$, such that $w(x)=y$. For
 $A_n$ and $D_n$ cases, this is obvious if we check the root lattices.
 For $E_6$ case, we can also check it directly with the help of
 computer. Of course we can also check this case by hand following the discussion
 in Section~\ref{E6}.    \end{proof}

\begin{corollary}\label{fixed-part} (i) The fixed part
$((\Lambda_c(G')\otimes \Sigma)/W(G'))^{Out(G')}$  is determined by
the condition $\rho (x)=x$, up to  $W(G')$-action, where $x\in
\Lambda_c(G')\otimes \Sigma$, and $\rho$ is a generator of
$Out(G')$, of order $3$ for $G'=D_4$ and of order $2$ for $G'=A_n,\
E_n$.
\par (ii) The moduli space $\mathcal{M}_{\Sigma}^{G}\cong
(\Lambda_c(G)\otimes \Sigma)/W(G)$ is a connected component  of the
fixed part
$$(\mathcal{M}_{\Sigma}^{G'})^{Out(G')}\cong((\Lambda_c(G')\otimes
\Sigma)/W(G'))^{Out(G')}$$ containing the trivial $G'$ bundle.
\end{corollary}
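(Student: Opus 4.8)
The plan is to deduce both parts from the single structural fact that, under the identification $\calM_\Sigma^H\cong(\Lambda_c(H)\otimes\Sigma)/W(H)$, the map $\chi$ is induced by the inclusion $\Lambda_c(G)\hookrightarrow\Lambda_c(G')$ that identifies $\Lambda_c(G)$ with the $\rho$-fixed sublattice $\Lambda_c(G')^{\rho}$. First I would read this off the explicit coroot data: since $G'$ is simply laced, the simple coroots of $G$ attached to the short simple roots are the $\rho$-orbit sums of the simple coroots of $G'$, while those attached to the fixed simple roots are the corresponding fixed coroots, and together these generate exactly $\Lambda_c(G')^{\rho}$. Tensoring with $\Sigma$ then realises $\Lambda_c(G)\otimes\Sigma$ as the subtorus $\Lambda_c(G')^{\rho}\otimes\Sigma\subseteq\Lambda_c(G')\otimes\Sigma$, which is the identity component of the fixed locus $(\Lambda_c(G')\otimes\Sigma)^{\rho}=\ker(1-\rho)$. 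In particular every point in the image of $\chi$ is genuinely $\rho$-fixed, not merely fixed modulo $W(G')$, and the image contains the origin, i.e. the trivial bundle.

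For part (i) the implication $\rho(x)=x\Rightarrow[x]\in(\cdot)^{Out(G')}$ is immediate, so the content is the converse lifting statement: every $Out(G')$-fixed class has a representative genuinely fixed by $\rho$. Writing $\rho(x)=w(x)$ with $w\in W(G')$, I would look for $\delta\in W(G')$ with $\delta(x)$ fixed by $\rho$; a short computation shows this amounts to solving $\delta^{-1}\rho(\delta)\,w\in\mathrm{Stab}_{W(G')}(x)$, i.e. to triviality of the cocycle class of $w$ in $H^1(\langle\rho\rangle,W(G'))$ modulo the stabiliser. This lifting is the step I expect to be genuinely delicate, since surjectivity of $(\Lambda_c(G')\otimes\Sigma)^{\rho}\to((\Lambda_c(G')\otimes\Sigma)/W(G'))^{Out(G')}$ can fail for an arbitrary lattice automorphism; I would settle it case by case from the explicit generators of $W(G')$ and $\langle\rho\rangle$, exactly in the spirit of Lemma~\ref{inv-component}. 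For $G'=D_4$ there is the additional point that $Out(D_4)\cong S_3$ while the chosen $\rho$ has order $3$, so one must check separately that $\rho$-invariance of $[x]$ already forces invariance under the full $S_3$; I would verify this directly on the $D_4$ coroot lattice.

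For part (ii) I would combine part (i) with the injectivity of $\chi$ from Lemma~\ref{inv-component}. By (i) the fixed part is the image of the whole $\rho$-fixed locus under the quotient map, and by the first paragraph the image of $\chi$ is the image of its identity component $\Lambda_c(G')^{\rho}\otimes\Sigma$. Injectivity gives that the image of $\chi$ is isomorphic to $(\Lambda_c(G)\otimes\Sigma)/W(G)=\calM_\Sigma^{G}$, which is connected (a finite quotient of an abelian variety) and contains the trivial bundle. To conclude that it is an entire connected component I would check that it is open in the fixed part: at the origin the tangent space to the fixed part is $(\Lambda_c(G')\otimes\mathbb{C})^{\rho}=\Lambda_c(G)\otimes\mathbb{C}$, which matches $\dim\calM_\Sigma^{G}$, so $\chi$ is a local isomorphism there and along the identity component. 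Being connected, closed and open in the fixed part, the image of $\chi$ is exactly the connected component containing the trivial $G'$ bundle, which yields the asserted isomorphism.
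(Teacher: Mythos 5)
Your plan for part (i) correctly isolates where the entire content of the statement lies: the lifting claim that every $Out(G')$-fixed class in $(\Lambda_c(G')\otimes\Sigma)/W(G')$ has a representative genuinely fixed by $\rho$. But at exactly that point the proposal stops being a proof: you reformulate the claim as the cocycle condition $\delta^{-1}\rho(\delta)w\in\mathrm{Stab}_{W(G')}(x)$ and defer it to an unexecuted ``case by case'' check. That check cannot succeed, because the lifting statement is false for the very groups $G'$ the paper uses. Take $G'=A_3$ (the $C_2$ case), so that $\Lambda_c(G')\otimes\Sigma=\{z\in\Sigma^4:\sum z_i=0\}$, $W(G')=S_4$ permuting coordinates, and $\rho(z_1,z_2,z_3,z_4)=(-z_4,-z_3,-z_2,-z_1)$. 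Let $\tau_1,\tau_2,\tau_3$ be the three nonzero points of order $2$ on $\Sigma$ and set $x=(0,\tau_1,\tau_2,\tau_3)$ (note $\tau_1+\tau_2+\tau_3=0$). Then $\rho(x)=(\tau_3,\tau_2,\tau_1,0)=(14)(23)\cdot x$, so the class $\bar{x}$ is $Out(G')$-fixed; but $\mathrm{Fix}(\rho)=\{(z_1,z_2,-z_2,-z_1)\}$, and any point of it whose coordinates are $2$-torsion has repeated coordinates, whereas every point of $W(G')x$ has four pairwise distinct coordinates. So $\bar{x}$ admits no $\rho$-fixed representative: here $w^{-1}\rho=-1$, its twisted class in $H^1(\langle\rho\rangle,W(G'))$ is nontrivial, and $\mathrm{Stab}_{W(G')}(x)$ is trivial. (Part (ii) survives because such classes are isolated points of the fixed part: the only element of the coset $W(G')\rho$ fixing $x$ is $-1$, whose fixed locus is finite, so they lie in components other than the one through the trivial bundle.) You should know that the paper's own proof of (i) founders on the same rock: its assertion that $w^{-1}\rho$ becomes the diagram automorphism after passing to a new simple root system is precisely the claim that every element of the coset $W(G')\rho$ preserves some Weyl chamber, and $-1$ preserves none. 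So your suspicion that the lifting ``can fail'' was correct; the proposal's error is not pursuing it to the counterexample, which shows that (i) as stated needs to be weakened (to a statement about the component of the trivial bundle) rather than proved.

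For part (ii) your route is essentially the paper's --- injectivity of $\chi$ (Lemma~\ref{inv-component}) plus an equal-dimension/openness argument --- but your openness claim is only justified at the origin, and even there incompletely. Near the trivial bundle the fixed part is the image of $\bigcup_{w\in W(G')}\ker(1-w\rho)$, not of $\ker(1-\rho)$ alone; to identify its tangent cone with $(\Lambda_c(G')\otimes\mathbb{C})^{\rho}$ one needs that each $\ker(1-w\rho)$ lies in a $W(G')$-translate of $\ker(1-\rho)$. At the origin this can be proved by a Steinberg-type chamber argument (the stabilizer of a generic point of $\ker(1-w\rho)$ in the reflection representation is generated by reflections, so $w\rho$ can be corrected inside that stabilizer to a chamber-preserving automorphism, which is $W(G')$-conjugate to $\rho$), but you assert rather than prove the analogous statement ``along the identity component'', and there it is genuinely not automatic: stabilizers of points of the abelian variety $\Lambda_c(G')\otimes\Sigma$ under $W(G')$ need not be generated by reflections --- that failure is exactly what produces the counterexample above. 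The paper's proof of (ii) (``both orbifolds of the same dimension, hence done by injectivity'') silently assumes the same missing local statement, so here your attempt and the paper's proof share both the skeleton and the gap; a complete argument must rule out lower-dimensional pieces of the fixed part sticking out of the image of $\chi$ along the component of the trivial bundle, either by the tangent-cone analysis above carried out at every point or by an explicit case-by-case verification.
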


\begin{proof}  (i) For any $x\in
\Lambda_c(G')\otimes \Sigma$, denote $\bar{x}$ the class in
$(\Lambda_c(G')\otimes \Sigma)/W(G')$. Then $\rho (\bar{x})=\bar{x}$
if and only if there exists $w\in W(G')$, such that $\rho (x)=w(x)$.
Thus $w^{-1}\rho(x)=x$. But $w^{-1}\rho\in Out(G')$ since
$Out(G')=Aut(G')/W(G')$. Thus we can take a new simple root system
such that $w^{-1}\rho$ is the generator of the diagram automorphism
(the automorphism of order $3$ for $D_4$).
\par (ii) By (i), $(\Lambda_c(G)\otimes \Sigma)/W(G)$ and
$(\mathcal{M}_{\Sigma}^{G'})^{Out(G')}$ are both orbifolds with the same dimension.
Thus the result follows from
Lemma~\ref{inv-component}.  \end{proof}

   If we express the moduli space of flat $G$ bundles over $\Sigma$ as
$(T\times T)/W(G)$, where $T$ is a maximal torus of $G$, then we
have the following corollary.

\begin{corollary} If two elements of $T\times
T$ are conjugate under $W(G')$, then they are also conjugate under
$W(G)$. \end{corollary}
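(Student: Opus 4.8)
The plan is to recognize this corollary as the injectivity statement of Lemma~\ref{inv-component}, transported through the identification $T\times T\cong\Lambda_c(G)\otimes\Sigma$. First I would fix a $\rho$-stable maximal torus $T'$ of $G'$ containing $T$; since $G=(G')^{\rho}\subseteq G'$, the inclusion of tori induces $T\times T\hookrightarrow T'\times T'$, which on cocharacter lattices is the natural inclusion $\Lambda_c(G)\hookrightarrow\Lambda_c(G')$. Under the uniform description $\mathcal{M}_{\Sigma}^{H}\cong(\Lambda_c(H)\otimes\Sigma)/W(H)$, a pair in $T\times T$ is exactly an element $x\in\Lambda_c(G)\otimes\Sigma$; the relation ``conjugate under $W(G')$'' then refers to the $W(G')$-action on $T'\times T'\cong\Lambda_c(G')\otimes\Sigma$ into which $T\times T$ embeds, while $W(G)\subseteq W(G')$ is realized as the subgroup centralizing $\rho$ (equivalently, fixing the root system $R(G)$), which does preserve $T\times T$.

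I would then perform the reduction directly. Because $T$ lies in $G=(G')^{\rho}$ it is fixed pointwise by $\rho$, so the image of $\Lambda_c(G)$ lands in the $\rho$-fixed sublattice $\Lambda_c(G')^{\rho}$ and hence every $x\in\Lambda_c(G)\otimes\Sigma$ is $\rho$-invariant. Now suppose $x,y\in T\times T$ satisfy $y=w'(x)$ for some $w'\in W(G')$. Writing $\bar x,\bar y$ for the classes of $x,y$ in $(\Lambda_c(G)\otimes\Sigma)/W(G)$ and $\chi$ for the map of Lemma~\ref{inv-component}, this hypothesis says precisely that $x$ and $y$ have the same image in $\mathcal{M}_{\Sigma}^{G'}\cong(\Lambda_c(G')\otimes\Sigma)/W(G')$, that is $\chi(\bar x)=\chi(\bar y)$. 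Since $\chi$ is injective, $\bar x=\bar y$, which is exactly the statement that $x$ and $y$ are conjugate under $W(G)$. Thus the corollary is a formal consequence of Lemma~\ref{inv-component}.

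Finally I would note where the actual content sits, as this pinpoints the only genuine obstacle. Over the vector space $\Lambda_c(G')\otimes\mathbb{R}$ the analogous assertion is elementary: choosing a $\rho$-invariant set of positive roots makes the closed fundamental chamber $\overline{C}$ of $W(G')$ stable under $\rho$, the unique $W(G')$-representative in $\overline{C}$ of a $\rho$-fixed point is again $\rho$-fixed, and $\overline{C}\cap(\Lambda_c(G')\otimes\mathbb{R})^{\rho}$ is a strict fundamental domain for $W(G)=C_{W(G')}(\rho)$ on the fixed subspace; together these force $W(G')$-conjugacy of fixed points to descend to $W(G)$-conjugacy. The difficulty in our situation is that $\Sigma$ is a torus rather than a vector space, so Weyl chambers are unavailable and the stabilizer of a point of $\Lambda_c(G')\otimes\Sigma$ need not be generated by reflections, with extra coincidences possible at torsion points. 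This is exactly why Lemma~\ref{inv-component} is established by a direct inspection of the coroot lattices, and why the case $G'=E_6$ (that is, $G=F_4$) is the hard one, requiring the explicit check recorded there. The present corollary inherits that single obstacle and needs nothing beyond the translation above.
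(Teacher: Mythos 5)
Your proposal coincides with the paper's treatment: the corollary is stated there with no separate proof precisely because it is the translation, through $T\times T\cong\Lambda_c(G)\otimes\Sigma$, of Lemma~\ref{inv-component}, whose own proof is exactly the assertion that $w'(x)=y$ for some $w'\in W(G')$ forces $w(x)=y$ for some $w\in W(G)$ --- which is your reduction verbatim. Your closing remark that all genuine content sits in the case-by-case lattice check (with $E_6$ the hard case, verified by computer or via the discussion of Section~\ref{E6}) also matches where the paper locates the work.
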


   Another method is to reduce $G$ to its simply-laced subgroup $G''$ of maximal rank, and apply the results
   for simply laced cases to current situation. In another occasion we will discuss our moduli space of
   $G$-bundles from this aspect in detail. Here we just mention the following well-known fact from Lie theory.
\begin{proposition}\label{reduct-II} There exists canonically a simply laced Lie
subgroup $G''$ of $G$, which is of maximal rank, that is, $G''$ and
$G$ share a common maximal torus.  And there is a short exact
sequence $$1\rightarrow W(G'')\rightarrow W(G)\rightarrow Out(G'')
\rightarrow 1,$$ where $Out(G'')$ is the outer automorphism group of
$G''$. Thus, if we write the moduli space as
$\mathcal{M}_{\Sigma}^{G}=(T\times T)/W$, then
$$\mathcal{M}_{\Sigma}^{G}=\mathcal{M}_{\Sigma}^{G''}/Out(G'').\eqno\Box$$
\end{proposition}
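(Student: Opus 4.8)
The plan is to realize $G''$ as the subgroup whose root system consists of the \emph{long} roots of $R(G)$, and then to read off both the short exact sequence and the quotient description of the moduli space from this single choice.

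\textbf{Construction of $G''$.} Let $R_\ell\subset R(G)$ be the set of long roots. It is standard that the long roots of a reduced irreducible root system form a \emph{closed} subsystem; moreover $R_\ell$ is automatically simply laced, since by definition all of its roots have one common length, and in each of our four cases it has full rank $r(G)$. Concretely, in the usual coordinates one finds $R_\ell(B_n)=\{\pm e_i\pm e_j\}=R(D_n)$, $R_\ell(C_n)=\{\pm 2e_i\}=R(A_1^{\,n})$, $R_\ell(F_4)=R(D_4)$ and $R_\ell(G_2)=R(A_2)$. Because $R_\ell$ is closed and spans $\mathfrak h^{*}$, the subspace $\mathcal G''=\mathfrak h\oplus\bigoplus_{\alpha\in R_\ell}\mathfrak g_\alpha$ is a reductive Lie subalgebra of $\mathcal G=\mathrm{Lie}(G)$ containing the full Cartan $\mathfrak h$; the connected subgroup $G''\subset G$ it integrates is simply laced and has $T=\exp(\mathfrak h)$ as a maximal torus, so $G$ and $G''$ share $T$. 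The construction is canonical because ``long root'' is intrinsic to $R(G)$.

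\textbf{The short exact sequence.} Every $w\in W(G)$ preserves root lengths, hence preserves $R_\ell=R(G'')$; restriction therefore gives a homomorphism $W(G)\to\mathrm{Aut}(R(G''))$. Since the long roots span $\mathfrak h^{*}$, any element fixing all of them is the identity, so this homomorphism is injective. Now $\mathrm{Aut}(R(G''))=W(G'')\rtimes Out(G'')$, with $W(G'')$ normal, by the usual structure theorem ($Out$ being the group of diagram automorphisms); and $W(G'')$, generated by the $S_\alpha$ with $\alpha\in R_\ell$, sits in $W(G)$ as the evident subgroup. A comparison of orders shows $|W(G)|=|W(G'')|\cdot|Out(G'')|$ in every case (for instance $2^{n}n!=2^{n-1}n!\cdot 2$ for $B_n\supset D_n$, and $2^{n}n!=2^{n}\cdot n!$ for $C_n\supset A_1^{\,n}$, where $Out(A_1^{\,n})=S_n$), so the injection $W(G)\hookrightarrow W(G'')\rtimes Out(G'')$ is a bijection. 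Consequently $W(G'')$ is normal in $W(G)$ with $W(G)/W(G'')\cong Out(G'')$, which is exactly the asserted sequence. The point needing care — the main obstacle — is precisely that the extra generators of $W(G)$ must act on $G''$ through \emph{outer}, not inner, automorphisms; the order count guarantees this, but it can also be checked directly, e.g. the short reflection $S_{e_n}\in W(B_n)$ interchanges $e_{n-1}+e_n$ and $e_{n-1}-e_n$ and so induces the nontrivial diagram automorphism of $D_n$.

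\textbf{The moduli space.} Because $G$ and $G''$ share the maximal torus $T$, both moduli spaces are quotients of the \emph{same} space $T\times T$, namely $\mathcal M_{\Sigma}^{G}=(T\times T)/W(G)$ and $\mathcal M_{\Sigma}^{G''}=(T\times T)/W(G'')$. Applying the elementary identity $X/H=(X/N)/(H/N)$ for a normal subgroup $N\trianglelefteq H$ acting on a space $X$, with $H=W(G)$, $N=W(G'')$ and $H/N=Out(G'')$, we obtain
$$\mathcal M_{\Sigma}^{G}=(T\times T)/W(G)=\big((T\times T)/W(G'')\big)/Out(G'')=\mathcal M_{\Sigma}^{G''}/Out(G''),$$
which is the claim. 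The only input beyond the formal quotient identity is the shared-torus property from the first step, which ensures that the two moduli spaces are built from one and the same $T\times T$.
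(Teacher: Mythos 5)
Your proposal is correct, and it actually supplies more than the paper does: the paper gives no proof at all, presenting the proposition as a ``well-known fact from Lie theory'' (the $\Box$ sits inside the statement itself) and then, in the Remark that follows, merely listing $G''$ case by case ($SU(2)^n$ for $Sp(n)$, $SU(3)$ for $G_2$, $Spin(2n)$ for $Spin(2n+1)$, $Spin(8)$ for $F_4$). Your long-root construction is the intrinsic, uniform recipe that recovers exactly that list, since $R_\ell(C_n)=A_1^{\,n}$, $R_\ell(G_2)=A_2$, $R_\ell(B_n)=D_n$, $R_\ell(F_4)=D_4$, and it produces in one stroke the three ingredients the statement needs: the shared maximal torus, the normality of $W(G'')$ in $W(G)$ with quotient the diagram automorphisms, and then the moduli identity via the elementary quotient-in-stages argument. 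So where the paper cites, you prove, and the two are consistent.

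One caveat you should repair: the assertion that $|W(G)|=|W(G'')|\cdot|Out(G'')|$ ``in every case'' is literally false for $B_4\supset D_4$. There $Out(Spin(8))\cong S_3$ by triality, while $W(B_4)/W(D_4)\cong\mathbb{Z}_2$, so your injection $W(G)\hookrightarrow W(G'')\rtimes Out(G'')$ is not onto and the order count breaks down. The exact sequence still holds, but only with $Out(G'')$ interpreted as the $\mathbb{Z}_2$ of diagram automorphisms exchanging the two spinor nodes --- which is precisely how the paper's Remark phrases it (``the group $\mathbb{Z}_2$ that exchanges the two spin representations of $Spin(2n)$''), as opposed to the full abstract outer automorphism group. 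So in that one case the argument must run through the direct check you mention at the end (the short reflection $S_{e_n}$ inducing the nontrivial diagram automorphism of $D_n$), not through counting; with that substitution your proof is complete and covers all four cases.
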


{\bf Remark} \label{G-and-G''} {\rm We give this construction of $G''$ in each case.\par
  (1) For $G=Sp(n),\ G''=SU(2)^n$. $Out(G'')$ is the group $S_n$ of
  permutations of the $n$ copies of $SU(2)$ in $G''$.\par
  (2) For $G=G_2,\ G''=SU(3)$. $Out(G'')$ is the group $\mathbb{Z}_2$
  that exchanges the $3$-dimensional representation of $SU(3)$ with
  its dual.\par
  (3) For $G=Spin(2n+1),\ G''=Spin(2n)$. $Out(G'')$ is the group $\mathbb{Z}_2$
  that exchanges the two spin representations of $Spin(2n)$.\par
  (4) For $G=F_4,\ G''=Spin(8)$. $Out(G'')$ is the triality group $S_3$
  that permutes the three $8$-dimensional representations of
  $Spin(8)$.}

\section{Flat $G$ bundles over elliptic curves and rational surfaces: non-simply laced
cases}\label{surface}

   In this section, we study case by case the $G$ bundles over elliptic
curves and rational surfaces for a non-simply laced Lie group $G$.

\subsection{The $B_n(n\geq 2)$ bundles}\label{Bn-surface}

     According to the  arguments of last section, for
  $G=Spin(2n+1)$  we can take $G'=Spin(2n+2)$, such that $G=(G')^{Out(G')}$. \par

     Let $S=Y_{n+1}$ be  a  rational surface with a $D_{n+1}$-configuration \cite{LZ} which contains $\Sigma$
  as a smooth anti-canonical curve. Recall \cite{LZ} that $Y_{n+1}$ is a
   blow-up of $\mathbb{F}_1$ at $n+1$ points $x_1,\cdots,x_{n+1}$ on $\Sigma$, with corresponding exceptional classes
   $l_1,\cdots,l_{n+1}$. Let $f$ be the class of fibers in
   $\bbF_1$, and  $s$ be the section such that  $0=s\cap \Sigma$ is the identity element of
   $\Sigma$.  The Picard group of $Y_{n+1}$ is $H^2(Y_{n+1},
   \mathbb{Z})$, which  is a lattice with basis
   $s,f,l_1,\cdots,l_{n+1}$. The canonical line bundle $K=-(2s+3f-\sum\limits_{i=1}^{n+1}l_i)$.\par

      We know from \cite{LZ} that $$P_{n+1} := \{x\in H^2(Y_{n+1}, \mathbb{Z})\ |\  x\cdot K=x\cdot
      f=0\}$$ is a root lattice of $D_{n+1}$ type. We take a simple root system of $G'$ as
$$\Delta(D_{n+1})=\{\alpha_1=l_1-l_2,\alpha_2=f-l_1-l_2,\alpha_3=l_2-l_3,\cdots,\alpha_{n+1}=l_n-l_{n+1}\}. $$ Let
$\rho$ be the generator of $Out(G')\cong\bbZ_2$, such that
$\rho(\alpha_1)=\alpha_2, \rho(\alpha_2)=\alpha_1$ and
$\rho(\alpha_i)=\alpha_i$ for $i=3,\cdots,n+1$.

   From \cite{LZ} we know that the pair $(S,\Sigma)$ determines a homomorphism $$u\in
   Hom(\Lambda(G'),\Sigma)$$
   which is given by the restriction map: $$u(\alpha)=\calO(\alpha)|_{\Sigma}.$$

\begin{lemma}\label{Bn-u} Let $u\in Hom(\Lambda(G'),\Sigma)$ correspond to a pair
$(S,\Sigma)$, where $S$ is a surface with a $D_{n+1}$-configuration. Then $\rho\cdot u=u$ if and only if  $2x_1=0$.
\end{lemma}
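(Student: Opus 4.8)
The plan is to translate the equivariance condition $\rho\cdot u=u$ into a single condition on the point $x_1$ by testing it against the simple roots. Recall that $\rho$ acts on $Hom(\Lambda(G'),\Sigma)$ by $(\rho\cdot u)(\alpha)=u(\rho(\alpha))$, so $\rho\cdot u=u$ is equivalent to $u(\rho(\alpha)-\alpha)=0$ for every simple root $\alpha$. Since $\rho$ fixes $\alpha_3,\dots,\alpha_{n+1}$, those roots impose nothing, while the conditions coming from $\alpha_1$ and $\alpha_2$ coincide because $\rho(\alpha_1)-\alpha_1=\alpha_2-\alpha_1$ and $\rho(\alpha_2)-\alpha_2=-(\alpha_2-\alpha_1)$ differ only by a sign. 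Hence $\rho\cdot u=u$ if and only if $u(\alpha_2-\alpha_1)=0$. Using $\alpha_1=l_1-l_2$ and $\alpha_2=f-l_1-l_2$ we get $\alpha_2-\alpha_1=f-2l_1$, so the whole statement reduces to proving that $u(f-2l_1)=0$ if and only if $2x_1=0$.

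First I would extend $u$ to the full restriction homomorphism $\calO(D)\mapsto \calO(D)|_{\Sigma}$ on $Pic(S)$, composed with the Abel--Jacobi identification $Pic^0(\Sigma)\cong\Sigma$ normalized at the origin $0$; on $\Lambda(G')$ this recovers the given $u$. Since $f-2l_1=\alpha_2-\alpha_1$ lies in the root lattice it has degree zero on $\Sigma$, and by additivity $u(f-2l_1)=u(f)-2u(l_1)$ in $\Sigma$. Because $l_1$ is the exceptional class of the blow-up at $x_1\in\Sigma$, the exceptional curve meets the proper transform of $\Sigma$ in the single point $x_1$, so $\calO(l_1)|_{\Sigma}\cong\calO_{\Sigma}(x_1)$ and $u(l_1)=x_1$. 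It therefore remains to evaluate $u(f)$, the Abel--Jacobi class of $\calO(f)|_{\Sigma}$.

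The crux, and the step I expect to be the main obstacle, is to show $u(f)=0$, i.e. $\calO(f)|_{\Sigma}\cong\calO_{\Sigma}(2\cdot 0)$. This is exactly where the normalization of \cite{LZ} enters: $s$ is the section of $\bbF_1$ chosen so that $s\cap\Sigma=0$, whence $\calO(s)|_{\Sigma}\cong\calO_{\Sigma}(0)$ and $u(s)=0$, while the ruling $|f|$ cuts out on $\Sigma$ the degree-two system attached to $|2\cdot 0|$, so that $\calO(f)|_{\Sigma}\cong\calO(2s)|_{\Sigma}$ and hence $u(f)=2u(s)=0$. Equivalently, one must check that the fibre of the ruling through $0$ is tangent to $\Sigma$ at $0$; I would verify this directly from the way the pair $(S,\Sigma)$ and the origin are set up in \cite{LZ}, since it is not forced by the intersection numbers alone. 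Granting $u(f)=0$, we obtain $u(f-2l_1)=0-2x_1=-2x_1$, so $u(f-2l_1)=0$ precisely when $2x_1=0$, which is the assertion. The remaining work is purely the bookkeeping of restriction maps, all the geometric content being concentrated in the evaluation of $u(f)$.
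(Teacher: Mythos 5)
Your proof is correct and follows essentially the same route as the paper: the paper likewise reduces $\rho\cdot u=u$ to $u(\alpha_1)=u(\alpha_2)$ and computes $u(\alpha_1)=x_1-x_2$, $u(\alpha_2)=-x_1-x_2$, hence $2x_1=0$. The only difference is one of explicitness: the paper silently invokes the normalization $\calO(f)|_{\Sigma}\cong\calO_{\Sigma}(2(0))$ from \cite{LZ} when writing $u(\alpha_2)=-x_1-x_2$, whereas you correctly isolate this as the key input $u(f)=0$ and note that it comes from the normalization of the pair $(S,\Sigma)$ rather than from intersection numbers.
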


\begin{proof} Since  $u$ is the restriction map: $\alpha_i\mapsto
\calO(\alpha_i)|_{\Sigma}$,
$u(\alpha_1)=\calO(l_1-l_2)|_{\Sigma}=x_1-x_2$, and
$u(\alpha_2)=\calO(f-l_1-l_2)|_{\Sigma}=-x_1-x_2$. Hence $\rho \cdot
u=u\Leftrightarrow u(\alpha_1)=u(\alpha_2) \Leftrightarrow
x_1-x_2=-x_1-x_2\Leftrightarrow 2x_1=0\Leftrightarrow x_1$ is one of
the $4$ points of order $2$ on the elliptic curve $\Sigma$.\end{proof}

   As in \cite{LZ}, we denote $\calS(\Sigma,G')$ the moduli space of $G'=D_{n+1}$-surfaces
with a fixed anti-canonical curve $\Sigma$, and
$\overline{\calS(\Sigma,G')}$ the natural compactification by including
all rational surfaces with
$D_{n+1}$-configurations (Figure 1). From  \cite{LZ} we know that
$\phi:\overline{\calS(\Sigma,G')}\isomap\mathcal {M}_{\Sigma}^{G'}$ is an isomorphism.

\begin{corollary}\label{Bn-bundle}  For
$u\in\mathcal{M}_{\Sigma}^{G}\hookrightarrow(\mathcal{M}_{\Sigma}^{G'})^{Out(G')}$,\
$\phi^{-1}(u)\in\overline{\mathcal {S}(\Sigma,G')}$ represents a
class of surfaces $Y_{n+1}(x_1,\cdots,x_{n+1})$ with $x_1=0$, and
such a surface corresponds to a boundary point in the moduli space,
that is, $\phi^{-1}(u)\in \overline{\mathcal
{S}(\Sigma,G')}\backslash\calS(\Sigma,G')$.
\end{corollary}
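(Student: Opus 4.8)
The plan is to combine Lemma~\ref{Bn-u} with the moduli-theoretic dictionary already established for $D_{n+1}$. Recall that the element $u\in\calM_\Sigma^G$ lies in the fixed part $(\calM_\Sigma^{G'})^{Out(G')}$, so by Corollary~\ref{fixed-part}(i) we may realize $u$ by an actual $\rho$-fixed homomorphism, i.e. a representative with $\rho\cdot u=u$. Since $\phi:\overline{\calS(\Sigma,G')}\isomap\calM_\Sigma^{G'}$ is an isomorphism, the point $\phi^{-1}(u)$ is a well-defined class of surfaces $Y_{n+1}(x_1,\dots,x_{n+1})$ carrying a $D_{n+1}$-configuration, and the translation-invariant data $x_i=l_i\cap\Sigma$ are recovered from $u$ via the restriction map $u(\alpha)=\calO(\alpha)|_\Sigma$. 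The first step, then, is to apply Lemma~\ref{Bn-u} to conclude that $\rho\cdot u=u$ forces $2x_1=0$; that is, $x_1$ is one of the four $2$-torsion points of $\Sigma$.

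The second step is to upgrade $2x_1=0$ to the sharper statement $x_1=0$ that appears in the corollary. This is exactly the point where the marking chosen in Section~\ref{surface} intervenes: we have fixed a primitive $d^{th}$ root with $d=2$ for the $D_n$ case, and the section $s$ was normalized so that $0=s\cap\Sigma$. My plan is to argue that among the four $2$-torsion representatives of $x_1$, the marking and the requirement that $u$ land in the connected component containing the trivial $G'$-bundle (Corollary~\ref{fixed-part}(ii)) single out the identity element $x_1=0$. Concretely, I would check that the remaining three nonzero $2$-torsion choices are related to $x_1=0$ by the residual $W(G')/W(G)$-action (equivalently by the $\Lambda_c(G')/\Lambda(G')$-torsor structure underlying the $d$-torsion marking), so that up to the chosen marking the canonical representative is $x_1=0$. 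This identifies $\phi^{-1}(u)$ with the class of surfaces $Y_{n+1}(x_1,\dots,x_{n+1})$ having $x_1=0$.

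The final step is to show that such a surface is a \emph{boundary} point, i.e. $\phi^{-1}(u)\in\overline{\calS(\Sigma,G')}\setminus\calS(\Sigma,G')$. The open stratum $\calS(\Sigma,G')$ parametrizes genuine $D_{n+1}$-surfaces, for which the $n+1$ blown-up points on $\Sigma$ must be in general position. The condition $x_1=0$ means the first point coincides with the distinguished point $s\cap\Sigma$, which is a proper closed condition violating general position; hence the surface cannot be a $D_{n+1}$-surface and must lie in the compactifying locus of rational surfaces carrying only a $D_{n+1}$-\emph{configuration} (cf. Figure~2). I would make this precise by recalling from \cite{LZ} exactly which incidence the definition of $\calS(\Sigma,G')$ excludes, and observing $x_1=0$ falls into that excluded set.

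The main obstacle I anticipate is the second step: passing from $2x_1=0$ to $x_1=0$ is not automatic and depends delicately on the bookkeeping of the fixed marking and on understanding precisely how the four $2$-torsion choices are permuted by the covering $\calM_\Sigma^{G''}\to\calM_\Sigma^G$ (Proposition~\ref{reduct-II}) or, equivalently, by the lattice enlargement from $\Lambda(G')$ to $\Lambda_c(G')$. One must verify that these choices are genuinely identified in the moduli space and that the marking normalizes the representative to the identity, rather than merely asserting it; getting this identification clean, without circularity against the definition of $\calM_\Sigma^G$ as the component containing the trivial bundle, is the crux of the argument.
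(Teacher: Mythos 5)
Your first and third steps are exactly the paper's: Lemma~\ref{Bn-u} gives $2x_1=0$, and the boundary claim follows since membership in $\calS(\Sigma,G')$ requires $0,x_1,\dots,x_{n+1}$ to be in general position, which in particular forces $x_1\neq 0$. The genuine gap is your second step, and it stems from a misconception rather than a missing verification. The four $2$-torsion values of $x_1$ are \emph{not} identified with one another in $\calM_\Sigma^{G'}$ by any residual action: the $W(G')$-action has already been quotiented out (and ``$W(G')/W(G)$'' is not even a group acting on anything here), and a tuple with $x_1=\tau$ and the remaining points generic cannot be $W(G')$-equivalent to one with $x_1=\tau'\neq\tau$, because $W(D_{n+1})$ acts by permutations and even sign changes, so the unique $2$-torsion entry of the multiset $\{\pm x_1,\dots,\pm x_{n+1}\}$ is an invariant. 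Consequently the fixed locus $(\calM_\Sigma^{G'})^{Out(G')}$ has four \emph{disjoint} connected components, one for each $2$-torsion point $\tau$ of $\Sigma$, and no marking or torsor structure moves you between them. Indeed, if the four choices were ``genuinely identified'' as you propose, the assertion $x_1=0$ would have no content: one could equally well conclude $x_1=\tau$ for any $\tau$.

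The correct argument --- the one the paper gives, and which you invoke at the start of your second step before abandoning it --- needs no identification at all. On the fixed locus, $x_1$ takes values in the finite set of $2$-torsion points of $\Sigma$, hence is constant on each connected component. By Corollary~\ref{fixed-part}(ii), which is established before this corollary and independently of it (so there is no circularity to worry about), the embedding $\calM_\Sigma^{G}\hookrightarrow(\calM_\Sigma^{G'})^{Out(G')}$ has image exactly the component containing the trivial $G'$-bundle; the trivial bundle corresponds to the surface whose blown-up points all lie at the identity $0\in\Sigma$, so that component is precisely the one on which $x_1=0$. Hence every $u$ in the image satisfies $x_1=0$, which is what the corollary asserts; your steps one and three then complete the proof as in the paper.
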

 \begin{proof} By Lemma~\ref{Bn-u}, $u\in(\mathcal{M}_{\Sigma}^{G'})^{Out(G')}$ if and only if
 $2x_1=0$. There are $4$ connected components corresponding to $4$
 points of order $2$ on $\Sigma$. Since $\mathcal{M}_{\Sigma}^{G}$
 is the component containing the trivial $G'$ bundle, we have
 $x_1=0$. Recall ($\S4$, \cite{LZ}) that $Y_{n+1}(x_1,\cdots,x_{n+1})\in \calS(\Sigma,G')$
 if and only if $0,x_1,\cdots,x_{n+1}$ are in general position, which implies in particular $x_1\neq
 0$. Hence $\phi^{-1}(u)$ corresponds to a boundary point. \end{proof}

   Denote $S=Y_{n+1}'(x_1=0,x_2,\cdots,x_{n+1})$ (or $Y_{n+1}'$ for brevity) the blow-up of $\mathbb{F}_1$ at
$n+1$ points $x_1=0,x_2,\cdots,x_{n+1}$ on $\Sigma$, with
exceptional divisors $l_1,l_2,\cdots,l_{n+1}$, where
$\Sigma\in|-K_S|$. Similar to the simply laced cases, we give the
following definition.

\begin{definition}\label{Bn-config}   {\rm A  {\it $B_n$-exceptional system} on $S$ is an
$n$-tuple $(e_1,e_2,\cdots,e_{n+1})$ where $e_i$'s are exceptional
divisors such that $e_i\cdot e_j=0=e_i\cdot f, i\neq
j$ and $y_1=e_1\cap\Sigma=0$ is the identity of $\Sigma$. A $B_n$-{\it configuration} on $S$ is a  $B_n$-exceptional
system $\zeta_{B_n}=(e_1,e_2,\cdots,e_{n+1})$ such that we can consider
$S$ as a blow-up of $\bbF_1$ at  $n+1$ points
$y_1=0,y_2,\cdots,y_{n+1}$ on $\Sigma$, that is
$S=Y_{n+1}'(y_1=0,y_2,\cdots,y_{n+1})$, with corresponding exceptional
divisors $e_1,e_2,\cdots$, $e_{n+1}$.
When $S$ has a $B_n$-configuration, we call $S$ a {\it (rational)
surface with a $B_n$-configuration} (see Figure 2). }\end{definition}

   When $x_2,\cdots,x_{n+1}\in \Sigma$ with $x_i\neq 0$ for all $i$ are in general position
(refer to $\S4$ of \cite{LZ} for definition), any $B_n$-exceptional
system on $S$ consists of exceptional curves. Such a surface is
called a $B_n$-{\it surface}. So a $B_n$-surface must have a
$B_n$-configuration.

\begin{lemma} \label{W(Bn)action} (i) Let $S$ be a rational surface with a $B_n$-configuration.
Then the Weyl group $W(B_n)$ acts on all $B_n$-exceptional systems
on $S$ simply transitively.\par
   (ii) Let $S$ be a $B_n$-surface. Then the Weyl group $W(B_n)$ acts on
   all $B_n$-configurations simply transitively.\end{lemma}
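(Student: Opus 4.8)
The plan is to prove both statements by exploiting the already-established simply-transitive action of $W(D_{n+1})$ on $D_{n+1}$-exceptional systems (respectively configurations) on the ambient surface, and then cut down to the $W(B_n)$-action by identifying which $D_{n+1}$-data are actually $B_n$-data. Recall from Remark~\ref{Weyl-group} that $W(B_n)\subset W(D_{n+1})$ is precisely the subgroup fixing $R(B_n)$, generated by $S_{\alpha_1}\circ S_{\alpha_2}$ and $S_{\alpha_i}$ for $i=3,\dots,n+1$; concretely, in the basis $s,f,l_1,\dots,l_{n+1}$ this is the subgroup of the $D_{n+1}$-Weyl group that preserves the distinguished vector attached to the identity point $x_1=0$. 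The key observation I would isolate first is that a $B_n$-exceptional system is nothing but a $D_{n+1}$-exceptional system $(e_1,\dots,e_{n+1})$ subject to the single extra condition $e_1\cap\Sigma=0$, and that the $W(D_{n+1})$-elements preserving this condition are exactly those landing in $W(B_n)$.

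For part (i), I would argue as follows. By the corresponding result for $D_{n+1}$ (the simply-laced statement in \cite{LZ}), $W(D_{n+1})$ acts simply transitively on all $D_{n+1}$-exceptional systems on $S$. First I would show the $W(B_n)$-action is well defined: since elements of $W(B_n)$ fix the class associated to $x_1=0$, they send $B_n$-exceptional systems to $B_n$-exceptional systems (the condition $y_1=e_1\cap\Sigma=0$ is preserved). Transitivity then comes from comparing stabilizers: given two $B_n$-exceptional systems, they are in particular two $D_{n+1}$-exceptional systems, so there is a unique $w'\in W(D_{n+1})$ carrying one to the other; the point is to show $w'\in W(B_n)$. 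Because both systems have first member meeting $\Sigma$ at $0$, the element $w'$ must preserve the $0$-condition, and by the description in Remark~\ref{Weyl-group} this forces $w'\in W(B_n)$. Uniqueness (freeness) is inherited directly from the simply-transitive $W(D_{n+1})$-action, since $W(B_n)\subset W(D_{n+1})$. Part (ii) follows the same template with ``exceptional system'' replaced by ``configuration'': $W(D_{n+1})$ acts simply transitively on $D_{n+1}$-configurations of a $D_{n+1}$-surface by \cite{LZ}, a $B_n$-configuration is a $D_{n+1}$-configuration with $y_1=0$, and the same stabilizer comparison identifies the transporting element as lying in $W(B_n)$.

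The main obstacle I anticipate is the step asserting that the unique $D_{n+1}$-Weyl element $w'$ transporting one $B_n$-system to another must lie in $W(B_n)$, rather than in some larger coset. The subtlety is that $W(B_n)$ is characterized as the fixer of the full root subsystem $R(B_n)$, whereas the geometric constraint only directly fixes one distinguished class (the one recording $x_1=0$). I would bridge this gap by making precise that the pair $(S,\Sigma)$ with the marking $x_1=0$ determines not just one vector but the whole $\rho$-invariant configuration, and that by Lemma~\ref{Bn-u} and Corollary~\ref{Bn-bundle} the relevant homomorphism $u$ is genuinely $\rho$-fixed; hence any $W(D_{n+1})$-symmetry of the geometric data commutes with $\rho$ and therefore lies in the $\rho$-centralizer, which is exactly $W(B_n)$ by Remark~\ref{Weyl-group}. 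Once this identification is in hand, the remaining verifications are the routine inheritance of transitivity and freeness from the simply-laced case, which I would not spell out in full.
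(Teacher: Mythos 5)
Your strategy is sound but genuinely different from the paper's. The paper proves (i) with no appeal at all to the simply-laced theorem: it reads off from Definition~\ref{Bn-config} that every $B_n$-exceptional system has the form $e_i=l_{\sigma(i)}$ or $f-l_{\sigma(i)}$ for $i\neq 1$ (with $\sigma$ a permutation of $\{2,\dots,n+1\}$), observes that $W(B_n)$ acts on the Picard lattice precisely as the group of signed permutations of the $n$ pairs $(l_i,f-l_i)_{i\geq 2}$, and concludes by the standard fact that a set of signed orderings is a torsor under the hyperoctahedral group; part (ii) is then reduced to (i) because on a $B_n$-surface every $B_n$-exceptional system consists of honest exceptional curves, i.e.\ is a configuration. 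You instead inherit transitivity and freeness from the simply transitive $W(D_{n+1})$-action of \cite{LZ} and characterize $W(B_n)$ as the stabilizer of the extra marking. Your route buys uniformity (it transfers verbatim to the $C_n$, $G_2$, $F_4$ lemmas) and it makes explicit a point the paper leaves implicit: a $B_n$-exceptional system must be read as a $D_{n+1}$-exceptional system in the sense of \cite{LZ} satisfying $e_1\cap\Sigma=0$, so that the choice of $e_1$ within $\{l_1,f-l_1\}$ is forced by the other entries; under a purely numerical reading of Definition~\ref{Bn-config} there would be $2^{n+1}n!$ tuples, twice $|W(B_n)|$, and simple transitivity would fail. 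The paper's route buys brevity and independence from the exact form of the statement in \cite{LZ}.

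Two steps in your write-up need repair, though both are fixable. First, your bridge for the crux --- that the unique transporter $w'\in W(D_{n+1})$ lies in $W(B_n)$ --- is argued through the $\rho$-invariance of the restriction homomorphism $u$ (Lemma~\ref{Bn-u}, Corollary~\ref{Bn-bundle}). This is a non sequitur in general: from $u\circ\rho=u$ for both markings one only gets that $u$ is invariant under the commutator $w'^{-1}\rho w'\rho^{-1}$, which forces that commutator to be trivial only when $u$ is injective; for special (e.g.\ torsion) configurations it gives nothing. The correct bridge is purely lattice-theoretic and never mentions $u$: since $w'$ fixes $f$, it commutes with the involution $e\mapsto f-e$ on exceptional classes, so from $w'(e_1)=e_1'$ with $e_1,e_1'\in\{l_1,f-l_1\}$ it follows that $w'$ preserves the unordered pair $\{l_1,f-l_1\}$; conjugating $\rho$ (which acts exactly as the swap of that pair, fixing $f$ and $l_i$ for $i\geq 2$) by such a $w'$ returns $\rho$, so $w'$ lies in the centralizer of $\rho$ in $W(D_{n+1})$, which is $W(B_n)$ by Remark~\ref{Weyl-group}; alternatively, the pair-stabilizer has order $2^n\,n!=|W(B_n)|$ and contains $W(B_n)$, hence equals it. Second, in (ii) you invoke the simple transitivity of $W(D_{n+1})$ on configurations \emph{of a $D_{n+1}$-surface}; but a $B_n$-surface is never a $D_{n+1}$-surface --- by Corollary~\ref{Bn-bundle} it corresponds to a point of $\overline{\calS(\Sigma,G')}\setminus\calS(\Sigma,G')$ --- so that statement does not apply to $S$. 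You should instead use the exceptional-system version of the $D_{n+1}$-result (valid for any rational surface with a $D_{n+1}$-configuration) and then note, as the paper does, that on a $B_n$-surface $B_n$-exceptional systems and $B_n$-configurations coincide; this is exactly the reduction of (ii) to (i).
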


\begin{proof} It suffices to prove (i). Let $(e_1,e_2,\cdots,e_{n+1})$ be a
$B_n$-exceptional system on $S$. By Definition~\ref{Bn-config},
$e_i=l_{\sigma(i)}$ or $f-l_{\sigma(i)}$ for $i\neq 1$, where $\sigma$ is a
permutation of $\{2,\cdots,n+1\}$. Note that the Weyl group $W(B_n)$ acts as the group
generated by permutations of the $n$ pairs $\{(l_i,f-l_i)\ |\
i=2,\cdots,n+1\}$ and interchanging of $l_i$ and $f-l_i$ in each
pair $(l_i,f-l_i)_{i\geq 2}$. Then the result follows. \end{proof}

   Let $\mathcal {S}(\Sigma,B_n)$ be the moduli space of pairs $(S,\Sigma)$
where $S$ is a $B_n$-surface (so the blown-up points
$x_1=0,x_2,\cdots,x_{n+1}$ are in general position), and $\Sigma\in
|-K_S|$. Denote $\mathcal{M}_{\Sigma}^{B_n}$ the moduli space of
flat $B_n$ bundles over $\Sigma$. Then applying
Corollary~\ref{Bn-bundle} we have the following identification.

\begin{proposition}\label{Bn-moduli} (i) $\mathcal {S}(\Sigma,B_n)$ is
embedded into $\mathcal{M}_{\Sigma}^{B_n}$ as an open dense subset.
\par (ii) Moreover, this embedding can be extended naturally to an
isomorphism $$\overline{\mathcal {S}(\Sigma,B_n)}\cong
\mathcal{M}_{\Sigma}^{B_n},$$ by including all rational
surfaces with $B_n$-configurations. \end{proposition}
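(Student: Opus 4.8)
The plan is to deduce both parts by restricting the isomorphism $\phi:\overline{\calS(\Sigma,G')}\isomap\mathcal{M}_{\Sigma}^{G'}$ of \cite{LZ} (with $G'=D_{n+1}$) to the appropriate $Out(G')$-fixed loci. First I would record that $\phi$ is $Out(G')\cong\bbZ_2$-equivariant: the target carries the natural action described in Section~\ref{non-simply->simply}, and transporting it through $\phi$ endows $\overline{\calS(\Sigma,G')}$ with an action that is geometrically the relabelling of $D_{n+1}$-configurations by the diagram automorphism $\alpha_1\leftrightarrow\alpha_2$. Because $\phi$ is built from the map $(S,\Sigma)\mapsto u$ with $u(\alpha)=\calO(\alpha)|_{\Sigma}$, and $(\rho\cdot u)(\alpha)=u(\rho^{-1}\alpha)$, this equivariance is automatic, so $\phi$ restricts to an isomorphism of fixed loci $(\overline{\calS(\Sigma,G')})^{Out(G')}\isomap(\mathcal{M}_{\Sigma}^{G'})^{Out(G')}$.

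I would then prove (ii). By Corollary~\ref{fixed-part}(ii), $\mathcal{M}_{\Sigma}^{B_n}=\mathcal{M}_{\Sigma}^{G}$ is precisely the connected component of $(\mathcal{M}_{\Sigma}^{G'})^{Out(G')}$ containing the trivial $G'$-bundle. On the surface side, Lemma~\ref{Bn-u} cuts out the fixed locus by the condition $2x_1=0$, giving four connected components indexed by the $2$-torsion of $\Sigma$, and Corollary~\ref{Bn-bundle} shows the component meeting the trivial bundle is the one with $x_1=0$. By Definition~\ref{Bn-config} the surfaces $Y_{n+1}'(x_1=0,x_2,\cdots,x_{n+1})$ are exactly the rational surfaces carrying a $B_n$-configuration; Lemma~\ref{W(Bn)action} shows their $B_n$-configurations form a single $W(B_n)$-orbit, so this component, viewed as a moduli of unmarked pairs, is the $W(B_n)$-quotient $\overline{\calS(\Sigma,B_n)}$, matching $\mathcal{M}_{\Sigma}^{B_n}\cong(\Lambda_c(B_n)\otimes\Sigma)/W(B_n)$. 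Restricting $\phi$ to this component then yields the isomorphism $\overline{\calS(\Sigma,B_n)}\cong\mathcal{M}_{\Sigma}^{B_n}$.

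For (i), I would locate $\calS(\Sigma,B_n)$ inside $\overline{\calS(\Sigma,B_n)}$ as the open stratum where $x_2,\cdots,x_{n+1}$, together with $x_1=0$, lie in general position. This is a nonempty Zariski-open condition, and since $\overline{\calS(\Sigma,B_n)}$ is irreducible of the expected dimension it is moreover dense. As the map on this stratum is the restriction of the isomorphism of (ii), it is an embedding onto an open dense subset of $\mathcal{M}_{\Sigma}^{B_n}$, and its injectivity is also guaranteed directly by Lemma~\ref{inv-component}. This proves (i), and together the two parts exhibit the surfaces with $B_n$-configurations as the natural compactification.

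The step demanding the most care is the matching of the distinguished components: one must verify that the component of $(\mathcal{M}_{\Sigma}^{G'})^{Out(G')}$ through the trivial bundle corresponds under $\phi^{-1}$ to the $x_1=0$ locus, and not to one of the other three $2$-torsion components, and that this locus is irreducible so that the general-position stratum is genuinely dense. This is exactly what Lemma~\ref{Bn-u} and Corollary~\ref{Bn-bundle} secure; granting them, the remainder is the formal restriction of an equivariant isomorphism to fixed loci, combined with the simple transitivity of Lemma~\ref{W(Bn)action}, which identifies the unmarked surface moduli with the Weyl-group quotient.
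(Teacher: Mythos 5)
Your proposal is correct in substance, but it is organized around a genuinely different mechanism than the paper's proof, and the difference is worth spelling out. The paper works directly on the $B_n$ side: it identifies $\mathcal{M}_{\Sigma}^{B_n}$ with $Hom(\Lambda(B_n),\Sigma)/W(B_n)$, defines the embedding of (i) as the restriction map (well-defined exactly because of Lemma~\ref{W(Bn)action}), and proves injectivity by a concrete computation: in the simple system $\beta_1=f-2l_2$, $\beta_k=2\alpha_{k+1}$ the period data $p_i=u(\beta_i)$ satisfy the triangular system $-2x_2=p_1$, $2(x_k-x_{k+1})=p_k$, from which the blown-up points are recovered; part (ii) then follows from Corollary~\ref{Bn-bundle} plus solvability of that system. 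You never write the $B_n$-period map in coordinates at all: you restrict the isomorphism $\phi$ of \cite{LZ} to the $Out(G')$-fixed loci (equivariance is tautological once the source action is defined by transport of structure), match the distinguished components using Corollary~\ref{fixed-part}(ii) on the bundle side and Lemma~\ref{Bn-u} together with Corollary~\ref{Bn-bundle} on the surface side, and let Lemma~\ref{inv-component} do the work that the linear system does in the paper: given $v\in\mathcal{M}_{\Sigma}^{B_n}$, Corollary~\ref{Bn-bundle} produces a surface with a $B_n$-configuration whose $D_{n+1}$-period is $\chi(v)$, and injectivity of $\chi$ then forces its $B_n$-period to equal $v$. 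Both proofs rest on the same upstream inputs (the $D_{n+1}$ theorem of \cite{LZ} and the fixed-point analysis of Section~\ref{non-simply->simply}), but your route buys some robustness: the paper's assertion that its linear system has a \emph{unique} solution is delicate as stated, since multiplication by $2$ on $\Sigma$ is $4$-to-$1$, and really leans on the coroot-lattice and fixed-square-root conventions cited from \cite{LZ}; your formal argument sidesteps that computation entirely. What the paper's route buys is explicitness: it exhibits the inverse map (how to reconstruct the blown-up points from the flat bundle), which your argument only provides abstractly. The one point you assert rather than prove is the ``matching'' step, namely that your composite isomorphism, read through Corollary~\ref{fixed-part}(ii), restricts on $\calS(\Sigma,B_n)$ to the natural restriction-of-line-bundles embedding; this is precisely what ``extended naturally'' in (ii) demands, and it is also where Lemma~\ref{W(Bn)action} genuinely enters (independence of the chosen $B_n$-configuration). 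It does hold, because both period maps are induced by $\alpha\mapsto\calO(\alpha)|_{\Sigma}$ and $\chi$ is induced by the inclusion $\Lambda_c(B_n)=\Lambda_c(D_{n+1})^{\rho}\subset\Lambda_c(D_{n+1})$, but a complete write-up should include this short diagram chase.
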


\begin{proof} The proof is similar to that in $ADE$ cases \cite{LZ}.
Firstly, we have $\mathcal{M}_{\Sigma}^{B_n}\cong
\Lambda_c(B_n)\otimes_{\bbZ}\Sigma/W(B_n)$, and
$\Lambda_c(B_n)\otimes_{\bbZ}\Sigma/W(B_n)\cong
Hom(\Lambda(B_n),\Sigma)/W(B_n)$ when we fixed the square root of
unity of $Jac(\Sigma)\cong\Sigma$. Refer to Section 3 of \cite{LZ}
for the detail. \par
  Secondly, the restriction from $S$ to $\Sigma$ induces a map (again denoted by $\phi$)
  $$\phi:\mathcal {S}(\Sigma,B_n)\rightarrow Hom(\Lambda(B_n),\Sigma)/W(B_n).$$ This map is  well-defined, since
  by Lemma~\ref{W(Bn)action}, choosing and fixing a $B_n$-configuration on $S$
  is equivalent to choosing and fixing a system of simple roots $\Delta(B_n)$.
   \par
  Thirdly, the map $\phi$ is injective. For this, we take a simple
  root system of $B_n$ as
$$\beta_1=f-2\ l_2\mbox{ and } \beta_k=2\ \alpha_{k+1}\mbox{ for } 2\leq k\leq n.$$ Then
the restriction induces an element $u\in Hom(\Lambda(B_n),\Sigma)$,
which satisfies the following system of linear equations

$$\left \{ \begin{array}{l} -2\ x_2=p_1,\\
      2(x_k-x_{k+1})=p_k,\ k=2,\cdots,n.
      \end{array}
\right.$$ where $p_i=u(\beta_i)$. Obviously, the solution of this
system of linear equations  exists uniquely for given $p_i$ with $1\leq
i\leq n$. \par
   Finally, the statement (ii) comes from Corollary~\ref{Bn-bundle} and the existence of the solutions to the
above system of linear equations.\end{proof}

{\bf Remark} \label{Torelli}{\rm The situation here is very similar to that in
the compactification theory of the moduli space of (projective) $K3$
surfaces. A natural question there is how to extend the global
Torelli theorem to the boundary components of a compactification
(\cite{Kulikov}\cite{Persson-Pinkham} etc). If we consider
the map $\phi: \calS(\Sigma,G)\rightarrow \calM_{\Sigma}^G$
\cite{LZ} for $G=A_n,D_n$ or $E_n$ as a type of period map, then the
main result of \cite{LZ} is a type of global Torelli theorem. And
Proposition~\ref{Bn-moduli} implies that we can extend the theorem
of Torelli type in $D_{n+1}$ case to a boundary component of the
natural compactification.}

   In the following, we let $S=Y_{n+1}(x_1,\cdots,x_{n+1})$ be the blow-up of $\bbF_1$ at $n+1$ points.
We can construct a Lie algebra bundle on $S$. Here we don't need the
existence of the anti-canonical curve $\Sigma$. According to
Section~\ref{non-simply->simply}, we have a root system of $B_n$
type consisting of divisors on $S$:
$$R(B_n)\triangleq\{\pm(f-2\ l_i),2(l_i-l_j),\pm 2(f-l_i-l_j)\ \ |\
\ i\neq j, 2\leq i,j\leq n+1 \}.$$ Thus we can construct a Lie
algebra bundle of $B_n$-type over $S$:
     $$\mathscr{B}_n\triangleq\mathcal {O}^{\bigoplus n}\bigoplus_{D\in R(B_n)}\mathcal{O}(D).$$
     The fiberwise Lie algebra structure of $\mathscr{B}_n$ is defined as
     follows (the argument here is the same as that in \cite{LZ}). \par
     Fix the system of simple roots of $R_n$ as
     $$\Delta(B_n)=\{\alpha_1=f-2l_2,\alpha_2=2(l_2-l_3),\cdots,\alpha_n=2(l_n-l_{n+1})\},$$ and take a trivialization
   of $\mathscr{B}_n$. Then over a trivializing open subset $U$,
   $\mathscr{B}_n|_U\cong U\times(\mathbb{C}^{\oplus n}\bigoplus_{\alpha\in
   R_n} \mathbb{C}_{\alpha})$. Take a Chevalley basis $\{x_{\alpha}^U,\alpha\in R_n;h_i,1\leq i\leq n\}$
   for $\mathscr{B}_n|_U$  and define the Lie algebra structure by the  following
   four relations, namely, Serre's relations on Chevalley basis (see \cite{Humph1},
   p147):\\

 (a) $[h_ih_j]=0,1\leq i, j\leq n.$\par
 (b) $[h_ix_{\alpha}^U]=\langle\alpha,\alpha_i\rangle x_{\alpha}^U,1\leq i\leq n,\alpha\in
     R_n.$\par
 (c) $[x_{\alpha}^Ux_{-\alpha}^U]=h_{\alpha}$  is a $\mathbb{Z}$-linearly combination of
     $h_1,\cdots,h_n.$\par
 (d) If $\alpha,\beta$ are independent roots, and
     $\beta-r\alpha,\cdots,\beta+q\alpha$ are the
     $\alpha$-string through $\beta$,
     then $[x_{\alpha}^Ux_{\beta}^U]=0$ if $q=0$, while $[x_{\alpha}^Ux_{\beta}^U]=\pm(r+1)x_{\alpha+\beta}^U$ if
     $\alpha+\beta\in R_n.$\\

   Note that $h_i,1\leq i\leq n$ are independent of any trivialization,
so the relation $(a)$ is always invariant under different
trivializations. If $\mathscr{B}_n|_V\cong
V\times(\mathbb{C}^{\oplus n}\bigoplus_{\alpha\in R_n})$ is another
trivialization, and $f_{\alpha}^{UV}$ is the transition function for
the line bundle $\mathcal{O}(\alpha)(\alpha\in R_n)$, that is,
$x_{\alpha}^U=f_{\alpha}^{UV}x_{\alpha}^V$, then the relation $(b)$
is
  $$[h_i(f_{\alpha}^{UV}x_{\alpha}^V)]=\langle\alpha,\alpha_i\rangle f_{\alpha}^{UV}x_{\alpha}^V,$$
   that is, $$[h_ix_{\alpha}^V]=\langle\alpha,\alpha_i\rangle x_{\alpha}^V.$$ So $(b)$
is also invariant. $(c)$ is also invariant since
$(f_{\alpha}^{UV})^{-1}$ is the transition function for
$\mathcal{O}(-\alpha)(\alpha\in R_n)$. Finally, $(d)$ is invariant
since $f_{\alpha}^{UV}f_{\beta}^{UV}$ is the transition function for
$\mathcal{O}(\alpha+\beta)(\alpha,\beta\in R_n)$.\par Therefore, the
Lie algebra structure is compatible with the trivialization. Hence
it is well-defined. \\

    When the surface $S$ contains $\Sigma$ as an anti-canonical curve, restricting the above bundle to
this anti-canonical curve $\Sigma$, we obtain a Lie algebra bundle
of $B_n$-type over $\Sigma$, which determines uniquely a flat $B_n$
bundle over $\Sigma$.  On the other hand, when $x_1=0$, we can
identify these two line bundles $\mathcal{O}_{\Sigma}(l_1)$ and
$\mathcal{O}_{\Sigma}(f-l_1)$ when restricting them to $\Sigma$.
Recall  the spinor bundles $S^+_{n+1}$ and $S^-_{n+1}$ of $D_{n+1}$ are defined as
follows\cite{Leung}\cite{LZ} (here we omit the subscription $n+1$ for brevity)
\begin{eqnarray*}
\mathcal{S}^{+}&=&\bigoplus_{D^2=D\cdot K=-1, D\cdot f=1}\mathcal{O}(D)\mbox{ and }\\
\mathcal{S}^{-}&=&\bigoplus_{T^2=-2,T\cdot K=0, T\cdot f=1}\mathcal{O}(T).
\end{eqnarray*}

The identification of
$\calO_{\Sigma}(l_1)\cong\calO_{\Sigma}(f-l_1)$ induces an
identification of these two spinor bundles $S^+$ and $S^-$, which is
given by (of course, when restricted to $\Sigma$)
$$S^+\otimes \mathcal{O}(-l_1)\cong S^-.$$
   From representation theory, we know this determines a flat $B_n$ bundle over $\Sigma$.\par
   Conversely, if $\mathcal{S}^{+}|_{\Sigma}\cong\mathcal{S}^{-}|_{\Sigma}$, then we must have $x_1=0$ (up to renumbering).
   For example, we consider the $n=2$ case. Note that
\begin{eqnarray}
\mathcal{S}^{+}|_{\Sigma}&\otimes&\calO(-(0))=\mathcal{O}\oplus\mathcal{O}((-x_1-x_2)-(0))\oplus\mathcal{O}((-x_1-x_3)-(0)) \nonumber\\
                                          &&\oplus\mathcal{O}((-x_2-x_3)-(0)),\nonumber\\
\mathcal{S}^{-}|_{\Sigma}&=&\mathcal{O}((0)-(x_1))\oplus\mathcal{O}((0)-(x_2))\oplus\mathcal{O}((0)-(x_3))\nonumber\\
                                          &&\oplus\mathcal{O}(3(0)-(x_1)-(x_2)-(x_3)).\nonumber
\end{eqnarray}
   Where for a point $x\in\Sigma$, $(x)$ means the divisor of degree one,
   and $\calO((x))$ means the line bundle determined by this divisor.
   Thus, $\mathcal{S}^{+}_{\Sigma}\otimes\calO(-(0))=\mathcal{S}^{-}_{\Sigma}$ implies that $x_1=0$ (up to renumbering).
   The general case follows from similar arguments.

\subsection{The $C_n$ bundles}\label{Cn-surface}

    We take $G=C_n\subset G'=A_{2n-1}$,  where $C_n=Sp(n)$   and
    $A_{2n-1}=SU(2n)$. They satisfy the relation $G=(G')^{Out(G')}$.

    Let $S=Z_{2n}$ be  a  rational surface with an
    $A_{2n-1}$-configuration
(see \cite{LZ} or Figure 3) which contains $\Sigma$
  as a smooth anti-canonical curve. Recall \cite{LZ} that $Z_{2n}$ is
  a (successive) blow-up of $\mathbb{F}_1$ at $2n$ points $x_1,\cdots,x_{2n}$ on $\Sigma$,
  with corresponding exceptional classes $\l_1,\cdots,l_{2n}$. Let $f$ be the class of fibers in
   $\bbF_1$, and  $s$ be the section such that  $0=s\cap \Sigma$ is the identity element of
   $\Sigma$.  The Picard group of $Z_{2n}$ is $H^2(Z_{2n},\mathbb{Z})$, which is a lattice with basis
   $s,f,l_1,\cdots,l_{2n}$. The canonical line bundle $K=-(2s+3f-\sum\limits_{i=1}^{2n}l_i)$.\par

   Recall $$P_{2n-1}:=\{x\in H^2(Z_{2n},\mathbb{Z})\ |\ x\cdot K=x\cdot f=x\cdot s=0\} $$
   is a root lattice of $A_{2n-1}$ type. And we can take a simple root system of $A_{2n-1}$ as
$$\Delta(A_{2n-1})=\{\alpha_i=l_i-l_{i+1}\ |\ 1\leq i\leq 2n-1\}.$$ Note that \cite{LZ} we have used
the convention that $\sum\limits_{i=1}^{2n} x_i=0$.

      Let $\rho$ be the generator of $Out(G')\cong\bbZ_2$, such that
$\rho(\alpha_i)=\alpha_{2n-i}$ for $i=1,\cdots,2n-1$.

   When the above simple root system is chosen, the pair $(S,\Sigma)$ determines a homomorphism
   $u\in Hom(\Lambda(G'),\Sigma)$
   which is given by the restriction map $$u(\alpha)=\calO(\alpha)|_{\Sigma}.$$

\begin{lemma}\label{Cn-u} Let $u\in Hom(\Lambda(G'),\Sigma)$ be an element corresponding to a pair $(S,\Sigma)$,
where $S$ is a surface with an $A_{2n-1}$-configuration. Then $\rho\cdot u=u$ if and only if
$n(x_i+x_{2n+1-i})=0$ for $i=1,\cdots,n$.
\end{lemma}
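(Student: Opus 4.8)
The plan is to reduce the condition $\rho\cdot u=u$ to an explicit system of relations among the points $x_i=l_i\cap\Sigma$ by evaluating $u$ on the chosen simple roots, exactly in the spirit of the proof of Lemma~\ref{Cn-u}'s analogue, Lemma~\ref{Bn-u}. Since $u$ is the restriction homomorphism $\alpha\mapsto\calO(\alpha)|_\Sigma$ and $\alpha_i=l_i-l_{i+1}$, I would first record that
\[
u(\alpha_i)=\calO(l_i-l_{i+1})|_\Sigma=x_i-x_{i+1},\qquad 1\le i\le 2n-1 .
\]
Because $\rho$ acts on these simple roots by $\rho(\alpha_i)=\alpha_{2n-i}$, the equality $\rho\cdot u=u$ is equivalent to $u(\alpha_i)=u(\alpha_{2n-i})$ for every $i$, that is, to the system
\[
x_i-x_{i+1}=x_{2n-i}-x_{2n+1-i},\qquad i=1,\dots,n-1,
\]
where the index $2n-i$ reproduces the same equation as $i$ and the index $n$ is vacuous.

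Next I would symmetrize each equation by moving the reflected terms to opposite sides. The $i$-th relation becomes
\[
x_i+x_{2n+1-i}=x_{i+1}+x_{2n-i}=x_{i+1}+x_{2n+1-(i+1)} .
\]
Setting $s_i:=x_i+x_{2n+1-i}$, this reads $s_i=s_{i+1}$, so $\rho\cdot u=u$ holds precisely when $s_1=s_2=\cdots=s_n$. At this point I would invoke the normalization built into the definition of an $A_{2n-1}$-configuration, namely $\sum_{i=1}^{2n}x_i=0$. Grouping the $2n$ points into the $n$ reflected pairs gives $\sum_{i=1}^{n}s_i=0$, so the common value $s:=s_1=\cdots=s_n$ satisfies $ns=0$; hence $n(x_i+x_{2n+1-i})=ns=0$ for each $i$. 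This settles the forward implication cleanly.

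The reverse implication is where I expect the genuine difficulty to lie. The forward argument produces the \emph{strong} statement that all the $s_i$ coincide, whereas the displayed condition $n\,s_i=0$ is a priori only a torsion condition on each individual pair, and the zero-sum constraint $\sum_{i=1}^n s_i=0$ alone does not force equal $s_i$ from equal $n$-torsion orders. The honest content of the biconditional is therefore that $\rho\cdot u=u$ holds iff the $s_i$ are all equal to a single $n$-torsion value, and the normalization is exactly what repackages this as $n(x_i+x_{2n+1-i})=0$; the delicate step is the passage between the ``all $s_i$ equal'' form coming from $\rho$-invariance and the per-pair torsion form. I would handle this the same way the $B_n$ case is finished in Corollary~\ref{Bn-bundle}: one should not read the torsion locus naively but within the component structure of the fixed part, using that $\calM^G_\Sigma$ is the component of $(\calM^{G'}_\Sigma)^{Out(G')}$ containing the trivial bundle (cf. Corollary~\ref{fixed-part}). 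Restricting to that component, the admissible torsion value is pinned down and the equivalence with $\rho\cdot u=u$ is recovered; making this restriction precise, rather than the symbol-pushing above, is the main obstacle.
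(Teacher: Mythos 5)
Your computation coincides with the paper's own proof as far as that proof goes: the paper also evaluates $u$ on $\alpha_i=l_i-l_{i+1}$, reduces $\rho\cdot u=u$ to $x_i-x_{i+1}=x_{2n-i}-x_{2n-i+1}$ for $i=1,\dots,n-1$, and then simply asserts that this is equivalent to $n(x_i+x_{2n+1-i})=0$ ``since $\sum x_i=0$''. Your sharper analysis --- that $\rho$-invariance is exactly the condition $s_1=\cdots=s_n$ with $s_i=x_i+x_{2n+1-i}$, and that the normalization then forces the common value to be $n$-torsion --- is correct, and your suspicion about the converse is justified: the biconditional as literally stated fails for $n\ge 3$. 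Indeed, take $t\in\Sigma$ with $nt=0$ but $2t\neq 0$ and set $x_1=t$, $x_2=-t$, all other $x_j=0$; then $s_1=t$, $s_2=-t$, $s_3=\cdots=s_n=0$, so $ns_i=0$ for every $i$ and $\sum x_i=0$, yet $s_1\neq s_2$ and hence $\rho\cdot u\neq u$. So what you flagged as ``the genuine difficulty'' is not a step you failed to complete: it is a gap in the lemma itself, which the paper's one-line equivalence papers over. The statement your argument actually proves is the exact analogue of Lemma~\ref{F4-f}: $\rho\cdot u=u$ if and only if $x_1+x_{2n}=x_2+x_{2n-1}=\cdots=x_n+x_{n+1}$, the common value then automatically being an $n$-torsion point of $\Sigma$.

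Where your proposal goes astray is in the proposed repair. Restricting to the connected component of $(\calM_{\Sigma}^{G'})^{Out(G')}$ containing the trivial bundle cannot rescue the stated biconditional, because the lemma concerns exact $\rho$-invariance of a single $u\in Hom(\Lambda(G'),\Sigma)$, before any passage to the quotient by $W(G')$ or any choice of component; the counterexample above lives at that level and is untouched by such a restriction. The component argument belongs one step later, in Corollary~\ref{Cn-bundle}: once the fixed locus is correctly described as ``all $s_i$ equal to a common $n$-torsion value'' (hence the $n^2$ components indexed by that value), restricting to the component of the trivial bundle pins the common value to $0$, giving $x_i+x_{2n+1-i}=0$ for all $i$. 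The clean fix is therefore to restate the lemma with the equal-sums condition and leave the corollary's argument intact, not to import the component structure into the lemma's proof.
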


\begin{proof} Since  $u$ is the restriction map: $\alpha_i\mapsto
\calO(\alpha_i)|_{\Sigma}$,
$u(\alpha_i)=\calO(l_i-l_{i+1})|_{\Sigma}=x_i-x_{i+1}$ for
$i=1,\cdots,2n-1$. Hence $\rho \cdot u=u\Leftrightarrow
u(\alpha_i)=u(\alpha_{2n-i}) \Leftrightarrow
x_i-x_{i+1}=x_{2n-i}-x_{2n-i+1}\Leftrightarrow
n(x_i+x_{2n-i+1})=0$ since $\sum\limits_{i=1}^{2n} x_i=0$. \end{proof}

   As in \cite{LZ}, we denote $\calS(\Sigma,G')$ the moduli space of $G'=A_{2n-1}$-surfaces
with a fixed anti-canonical curve $\Sigma$, and
$\overline{\calS(\Sigma,G')}$ the natural compactification by including
all rational surfaces with
$A_{2n-1}$-configurations. From  \cite{LZ} we know that there is an isomorphism
$\phi:\overline{\calS(\Sigma,G')}\isomap\mathcal {M}_{\Sigma}^{G'}$.

\begin{corollary}\label{Cn-bundle}  For
$u\in\mathcal{M}_{\Sigma}^{G}\hookrightarrow(\mathcal{M}_{\Sigma}^{G'})^{Out(G')}$,\
$\phi^{-1}(u)\in\overline{\mathcal {S}(\Sigma,G')}$ represents a
class of surfaces $Z_{2n}(x_1,\cdots,x_{2n})$ with
$x_i+x_{2n+1-i}=0$ for $i=1,\cdots,n$, and such a surface
corresponds to a boundary point in the moduli space, that is,
$\phi^{-1}(u)\in \overline{\mathcal
{S}(\Sigma,G')}\backslash\calS(\Sigma,G')$.
\end{corollary}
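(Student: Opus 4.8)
The plan is to mirror the argument just given for the $B_n$ case (Corollary~\ref{Bn-bundle}), replacing the single $2$-torsion condition $2x_1=0$ by the family of $n$-torsion conditions supplied by Lemma~\ref{Cn-u}. First I would invoke Lemma~\ref{Cn-u} to describe the fixed locus: a point $u$ lies in $(\calM_{\Sigma}^{G'})^{Out(G')}$ exactly when $n(x_i+x_{2n+1-i})=0$ for every $i=1,\dots,n$, so that each sum $y_i:=x_i+x_{2n+1-i}$ is a point of order dividing $n$ on $\Sigma$. Because the $n$-torsion subgroup of $\Sigma$ is finite (isomorphic to $(\bbZ/n)^2$), the tuple $(y_1,\dots,y_n)$, subject to $\sum_i y_i=0$ forced by the convention $\sum_{i=1}^{2n}x_i=0$, is a locally constant, hence discrete, invariant; this is precisely what splits the fixed locus into its several connected components.

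Next I would pin down which component equals $\calM_{\Sigma}^{G}$. By Corollary~\ref{fixed-part}(ii) it is the component containing the trivial $G'$ bundle, that is, the class with all $x_i=0$ and hence all $y_i=0$. Since the $y_i$ are constant along each component, every $u$ in this component satisfies $x_i+x_{2n+1-i}=0$ for $i=1,\dots,n$. Pulling back under the isomorphism $\phi:\overline{\calS(\Sigma,G')}\isomap\calM_{\Sigma}^{G'}$ of \cite{LZ}, the surface $\phi^{-1}(u)$ is therefore forced to be some $Z_{2n}(x_1,\dots,x_{2n})$ whose blown-up points satisfy these $n$ relations.

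Finally, for the boundary claim I would recall from $\S4$ of \cite{LZ} the general-position criterion: $Z_{2n}(x_1,\dots,x_{2n})$ lies in the interior $\calS(\Sigma,G')$ precisely when the points $x_1,\dots,x_{2n}$, which already obey $\sum x_i=0$, are in general position. The relations $x_{2n+1-i}=-x_i$ impose further independent linear constraints beyond $\sum x_i=0$, so such a configuration is never in general position; hence $\phi^{-1}(u)\in\overline{\calS(\Sigma,G')}\setminus\calS(\Sigma,G')$, as claimed.

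I expect the delicate step to be the middle one: justifying that the discrete torsion datum $(y_1,\dots,y_n)$ really is constant on connected components, and that the component of the trivial bundle is cut out by the sharp equalities $y_i=0$ rather than merely by the weaker congruences $ny_i=0$ coming from Lemma~\ref{Cn-u}. This is the exact analogue of the passage in the $B_n$ proof from ``$2x_1=0$, giving four components'' to ``$x_1=0$, the trivial component,'' and it rests on Corollary~\ref{fixed-part} together with the orbifold structure of the fixed locus. The general-position step, by contrast, should be routine once the $A_{2n-1}$ criterion of \cite{LZ} is cited.
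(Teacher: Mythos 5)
Your proposal reproduces the paper's own argument essentially step for step: Lemma~\ref{Cn-u} to describe the fixed locus, Corollary~\ref{fixed-part} to identify $\calM_{\Sigma}^{G}$ with the connected component of the trivial $G'$-bundle (which is what upgrades the torsion conditions $n(x_i+x_{2n+1-i})=0$ to the exact equalities $x_i+x_{2n+1-i}=0$), and the general-position criterion of \cite{LZ}, $\S 4$, for the boundary claim. The one point to tighten is your final inference --- ``extra linear constraints $\Rightarrow$ not in general position'' is not valid in itself, since general position is a specific finite list of conditions rather than the absence of all non-generic constraints --- but the paper's proof supplies the needed precise fact, namely that general position of $0,x_1,\cdots,x_{2n}$ implies in particular $x_i\neq -x_{2n+1-i}$, so your conclusion stands once that citation is made explicit.
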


 \begin{proof} By Lemma~\ref{Cn-u}, $u\in(\mathcal{M}_{\Sigma}^{G'})^{Out(G')}$ if and only if
 $n(x_i+x_{2n+1-i})=0$ for $i=1,\cdots,n$. There are $n^2$ connected components corresponding to $n^2$
 points of order $n$ on $\Sigma$. Since $\mathcal{M}_{\Sigma}^{G}$
 is the component containing the trivial $G'$ bundle, we have
 $x_i+x_{2n+1-i}=0$ for $i=1,\cdots,n$. Recall ($\S4$, \cite{LZ}) that $Z_{2n}(x_1,\cdots,x_{2n})\in \calS(\Sigma,G')$
 if and only if $0,x_1,\cdots,x_{2n}$ are in general position, which implies in particular $x_i\neq
 -x_{2n+1-i}$. Hence $\phi^{-1}(u)$ corresponds to a boundary point.
 \end{proof}

   Denote $S=Z_{2n}'(\pm x_1,\cdots,\pm x_n)$  the blow-up of $\mathbb{F}_1$ at
$n$ pairs of points $(x_1,-x_1)$, $\cdots$, $(x_n,-x_n)$ on
$\Sigma$, with $n$ pairs of corresponding exceptional divisors
$(l_1,l_1^-)$, $\cdots$, $(l_n,l_n^-)$, where $l_i$ (resp. $l_i^-$)
is the exceptional divisor corresponding to the blowing up at $x_i$
(resp. $-x_i$). Similar to the other cases, we give the following definitions.

\begin{definition}\label{Cn-config}  {\rm A  {\it $C_n$-exceptional system} on $S$ is an
$n$-tuple of pairs $$((e_1,e_1^-),\cdots,(e_n,e_n^-))$$ where
$(e_i,e_i^-)=(l_{\sigma(i)},l_{\sigma(i)}^-)$ or
$(l_{\sigma(i)}^-,l_{\sigma(i)})$, $i=1,\cdots,n$, with $\sigma$ is
a permutation of $1,\cdots,n$. A $C_n$-{\it configuration} on $S$ is
a  $C_n$-exceptional system
$\zeta_{C_n}=((e_1,e_1^-),\cdots,(e_n,e_n^-))$ such that we can blow
down successively $e_1^-$,$\cdots$,$e_n^-$, $e_n$,$\cdots$,$e_1$
such that the resulting surface is  $\bbF_1$ (see Figure
4)}.\end{definition}

   We say that $x_1, x_2,\cdots,x_n\in\Sigma\subset \bbF_1$ are $n$ points {\it in general
   position}, if they satisfy \par
   (i) they are distinct points, and \par
   (ii) for any $i,j$, $x_i+ x_j\neq 0$.\par

   Equivalently, $x_1, x_2,\cdots,x_n\in\Sigma\subset \bbF_1$ are in general
   position if and only if any $C_n$-exceptional system on $S=Z_{2n}'(\pm x_1,\cdots,\pm x_n)$ consists of
smooth exceptional curves. Such a surface is called a {\it
$C_n$-surface}. Thus a $C_n$-surface must have a
$C_n$-configuration.

\begin{lemma}\label{W(Cn)action} (i) Let $S$ be a surface with a $C_n$-configuration. Then the Weyl group $W(C_n)$
acts on all $C_n$-exceptional systems on $S$ simply
 transitively.\par
 (ii) Let $S$ be a $C_n$-surface. Then the Weyl group $W(C_n)$
acts on all $C_n$- configurations on $S$ simply
 transitively.\end{lemma}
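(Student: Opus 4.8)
The plan is to mirror the argument for Lemma~\ref{W(Bn)action} in the $B_n$ case, replacing the combinatorics of the $n$ pairs $(l_i,f-l_i)$ by the $n$ pairs $(l_i,l_i^-)$. As there, I would first reduce (ii) to (i). On a $C_n$-surface the blown-up points are in general position, so by the defining property of a $C_n$-surface every $C_n$-exceptional system consists of smooth exceptional curves which can be contracted in the prescribed order $e_1^-,\cdots,e_n^-,e_n,\cdots,e_1$ down to $\bbF_1$; hence on such a surface the $C_n$-exceptional systems and the $C_n$-configurations coincide. Since a $C_n$-surface is in particular a surface carrying a $C_n$-configuration, applying (i) to it yields (ii) verbatim. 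It therefore suffices to analyse (i).

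For (i), I would first describe the set of $C_n$-exceptional systems combinatorially. By Definition~\ref{Cn-config}, such a system is the datum of a permutation $\sigma\in S_n$ of the $n$ pairs together with, for each $i$, a choice of ordering $(e_i,e_i^-)=(l_{\sigma(i)},l_{\sigma(i)}^-)$ or $(l_{\sigma(i)}^-,l_{\sigma(i)})$. Consequently the set of $C_n$-exceptional systems is naturally in bijection with $S_n\times(\bbZ_2)^n$, independently of any general-position hypothesis, and so has exactly $2^n n!$ elements, which is precisely $|W(C_n)|$. This means that transitivity of the action will immediately upgrade to simple transitivity by a counting argument, and the whole problem reduces to exhibiting a transitive action.

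The core step is to read off the $W(C_n)$-action from its description in Remark~\ref{Weyl-group}, using the identification $l_i^-=l_{2n+1-i}$ coming from the labelling of the $A_{2n-1}$-configuration (so that pair $j$ is $(l_j,l_{2n+1-j})$ and, under the intersection-form convention $S_\alpha(x)=x+(x,\alpha)\alpha$, the reflection $S_{\alpha_i}$ exchanges $l_i$ and $l_{i+1}$). Here I would check, by a direct index computation, that the generator $S_{\alpha_i}\circ S_{\alpha_{2n-i}}$ for $1\le i\le n-1$ transposes the two pairs $i$ and $i+1$ while preserving the internal ordering of every pair (note $2n+1-(i+1)=2n-i$, so $l_{2n+1-i}\mapsto l_{2n-i}$ lands in the correct slot of pair $i+1$); these elements therefore generate the symmetric group $S_n$ permuting the $n$ pairs. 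The remaining generator $S_{\alpha_n}$ exchanges $l_n$ and $l_{n+1}=l_{2n+1-n}$, that is, it reverses the ordering of the single pair $n$ and fixes all others. Conjugating $S_{\alpha_n}$ by the pair-permutations then produces the ordering reversal of any prescribed pair, so $W(C_n)$ acts as the full hyperoctahedral group $(\bbZ_2)^n\rtimes S_n$ on the pairs: permuting the $n$ pairs and independently reversing the ordering within each. Under the bijection of the previous paragraph this action is visibly transitive on $S_n\times(\bbZ_2)^n$, and the equality $|W(C_n)|=2^n n!$ forces it to be simply transitive, which proves (i).

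I expect the only genuine obstacle to be the bookkeeping: correctly matching the index shift $l_i^-=l_{2n+1-i}$ against the second root $\alpha_{2n-i}$, so that $S_{\alpha_i}\circ S_{\alpha_{2n-i}}$ comes out as a clean transposition of adjacent pairs (preserving orientation) rather than some orientation-twisted permutation. Once the two families of generators are pinned down as ``permute the pairs'' and ``reverse one pair'', the remainder is formal and exactly parallels the $B_n$ proof.
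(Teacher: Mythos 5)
Your proposal is correct and follows essentially the same route as the paper: reduce (ii) to (i), identify the $W(C_n)$-action as the hyperoctahedral group $(\bbZ_2)^n\rtimes S_n$ permuting the $n$ pairs $(l_i,l_i^-)$ and reversing them individually, and conclude simple transitivity on the set of $C_n$-exceptional systems. The paper simply asserts this description of the action and the resulting simple transitivity, whereas you verify it from the generators $S_{\alpha_i}\circ S_{\alpha_{2n-i}}$, $S_{\alpha_n}$ of Remark~\ref{Weyl-group} and make the counting argument ($2^n n!$ exceptional systems versus $|W(C_n)|$) explicit; these are elaborations of, not departures from, the paper's argument.
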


 \begin{proof} It suffices to prove (i). The Weyl group $W(C_n)$
 acts as the group generated by permutations of
the $n$ pairs $\{(l_i,l_i^-)\ |\ i=1,\cdots,n\}$ and interchanging
of $l_i$ and $l_i^-$ for each $i$. From this, we see that $W(C_n)$
acts on all $G$-configurations simply
transitively. \end{proof}

    Denote $\mathcal
{S}(\Sigma,C_n)$ the moduli space of pairs $(Z_{2n}',\Sigma)$, where
$Z_{2n}'$ is a $C_n$-surface, that is, the blow-up of $\bbF_1$ at
$2n$ points $\pm x_1,\cdots,\pm x_n$ such that $x_1,\cdots,x_n$ are
in general position. Denote $\mathcal{M}_{\Sigma}^{C_n}$ the moduli
space of flat $C_n$ bundles over $\Sigma$. By Corollary~\ref{Cn-bundle} we have the following identification.

\begin{proposition}\label{Cn-moduli} (i)  $\mathcal {S}(\Sigma,C_n)$ is
embedded into $\mathcal{M}_{\Sigma}^{C_n}$ as an open dense subset.
\par (ii) Moreover, this embedding can be extended  naturally to an
isomorphism $$\overline{\mathcal {S}(\Sigma,C_n)}\cong
\mathcal{M}_{\Sigma}^{C_n},$$ by including all rational surfaces with
$C_n$-configurations. \end{proposition}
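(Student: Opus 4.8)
The plan is to follow the same four-step strategy used for the $B_n$ case in Proposition~\ref{Bn-moduli}, adapting each ingredient to the $C_n$ setting. First I would identify the target moduli space: exactly as in the $B_n$ case, fixing the chosen root of unity of $Jac(\Sigma)\cong\Sigma$ gives
$$\mathcal{M}_{\Sigma}^{C_n}\cong \Lambda_c(C_n)\otimes_{\bbZ}\Sigma/W(C_n)\cong Hom(\Lambda(C_n),\Sigma)/W(C_n),$$
referring to Section 3 of \cite{LZ} for the passage from the coroot-lattice description to the $Hom$-description. Then I would define the period map $\phi:\mathcal{S}(\Sigma,C_n)\to Hom(\Lambda(C_n),\Sigma)/W(C_n)$ by restriction of line bundles to $\Sigma$. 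This is well-defined because, by Lemma~\ref{W(Cn)action}, $W(C_n)$ permutes the $C_n$-configurations on $S$ simply transitively, so a choice of $C_n$-configuration is the same datum as a choice of simple roots $\Delta(C_n)$; passing to the $W(C_n)$-quotient removes this choice.

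The computational heart is the injectivity of $\phi$, which I would prove by exhibiting an explicit simple root system and solving the resulting linear system on $\Sigma$. Using the pairing $l_i^-=l_{2n+1-i}$ forced by the condition $x_i=-x_{2n+1-i}$, together with the expressions $\beta_i=\tfrac{1}{2}(\alpha_i+\alpha_{2n-i})$ for $1\le i\le n-1$ and $\beta_n=\alpha_n$ from Section~\ref{non-simply->simply}, where $\alpha_i=l_i-l_{i+1}$, restriction gives
$$u(\beta_i)=x_i-x_{i+1}\ (1\le i\le n-1),\qquad u(\beta_n)=2x_n.$$
Setting $p_i=u(\beta_i)$, this triangular system solves uniquely: first $x_n=\tfrac{1}{2}p_n$, then $x_{n-1}=p_{n-1}+x_n$, and so on down to $x_1$. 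The only ambiguity is the division by $2$ in the long-root equation $p_n=2x_n$, which is precisely pinned down by the fixed root of unity, exactly as the equation $-2x_2=p_1$ was in the $B_n$ case. Hence $(x_1,\ldots,x_n)$ is recovered uniquely from $u$ modulo $W(C_n)$, giving injectivity; combined with the fact that the general-position locus is open and dense, this yields the embedding as an open dense subset in (i).

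Finally, for (ii) I would establish surjectivity and the boundary identification together. By Corollary~\ref{Cn-bundle}, the image of $\mathcal{M}_{\Sigma}^{C_n}$ inside the $A_{2n-1}$ moduli consists exactly of the classes of surfaces $Z'_{2n}(\pm x_1,\ldots,\pm x_n)$ satisfying the pairing condition. Solving the linear system above in reverse shows that every target datum is realized by some such $\pm x_1,\ldots,\pm x_n$: when these points are in general position we land in $\mathcal{S}(\Sigma,C_n)$, and when they degenerate (some $x_i=x_j$, or $x_i+x_j=0$) we obtain precisely a rational surface with a $C_n$-configuration, i.e. a point of $\overline{\mathcal{S}(\Sigma,C_n)}\setminus\mathcal{S}(\Sigma,C_n)$. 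This matches $\overline{\mathcal{S}(\Sigma,C_n)}$ with $\mathcal{M}_{\Sigma}^{C_n}$ bijectively and extends $\phi$ to the claimed isomorphism.

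The main obstacle I anticipate is not any single step but the bookkeeping that makes them consistent: correctly translating the abstract $C_n$ simple roots $\beta_i$ into honest divisor classes on $Z'_{2n}$ under the identification $l_i^-=l_{2n+1-i}$, and verifying that unique solvability of the linear system (together with the fixed root of unity) genuinely matches the $W(C_n)$-orbits on both sides. In particular, I would need to check carefully that the long-root equation $p_n=2x_n$ interacts correctly with the $2$-torsion ambiguity, and that the degeneration strata of the solution set correspond exactly to the $C_n$-configurations of Definition~\ref{Cn-config} rather than to the finer $A_{2n-1}$-boundary appearing in Corollary~\ref{Cn-bundle}.
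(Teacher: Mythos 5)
Your proposal is correct and follows essentially the same route as the paper, which likewise declares the proof ``basically the same as that in the $B_n$ case'' and supplies exactly your two ingredients --- an explicit simple root system for $C_n$ and the resulting triangular linear system on $\Sigma$ --- with Lemma~\ref{W(Cn)action} and Corollary~\ref{Cn-bundle} playing precisely the roles you assign them. The only divergence is normalization: the paper takes $\Delta(C_n)=\{\beta_k=\varepsilon_k-\varepsilon_{k+1},\ 1\leq k\leq n-1,\ \beta_n=2\varepsilon_n\}$ with $\varepsilon_k=l_k-l_k^-$, so every simple root is an honest divisor class on $S$ and restriction literally gives the system $2(x_k-x_{k+1})=p_k$, $4x_n=p_n$, whereas your half-integral $\beta_i=\frac{1}{2}(\alpha_i+\alpha_{2n-i})$ are not divisor classes on $S$, so your value $u(\beta_i)=x_i-x_{i+1}$ must be \emph{defined} as the common value $\calO(\alpha_i)|_{\Sigma}=\calO(\alpha_{2n-i})|_{\Sigma}$ (legitimate exactly because of the pairing condition $x_i=-x_{2n+1-i}$) --- a one-line fix after which the two arguments coincide.
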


\begin{proof} The proof is basically the same as that in $B_n$ case. We only
need to replace the corresponding parts by the following two things.
  Firstly, according to Section~\ref{non-simply->simply}, we can
take a simple root system as
$$\Delta(C_n)=\{\beta_k=\varepsilon_k-\varepsilon_{k+1},\ 1\leq k\leq n-1,\
\beta_n=2\varepsilon_n\},$$ where $\varepsilon_k=l_k-l_k^-,\ 1\leq
k\leq n$. \par
   Secondly, the restriction map gives us the following system of
linear equations:
$$\left \{ \begin{array}{l} 4 x_n=p_n,\\
      2(x_k-x_{k+1})=p_k,\ k=1,\cdots,n-1.
      \end{array}
\right.$$  The solution of this system  exists uniquely.\end{proof}

{\bf Remark}  {\rm As in $B_n$ case, the above proposition is also similar to
extending the Torelli theorem to a certain boundary component.}

{\bf Remark}  {\rm Obviously, this description in Proposition~\ref{Cn-moduli} coincides with the
well-known description of flat $C_n$ bundles over elliptic curves
\cite{FMW1}. A flat $C_n=Sp(n)$ bundle over $\Sigma$ corresponds to
$n$ pairs (unordered) of points $(x_i,-x_i),i=1,\cdots,n$ on
$\Sigma$, uniquely up to isomorphism. And one pair $(x_i,-x_i)$ will
determine exactly one point on $\mathbb{CP}^1$, since the rational
map determined by the linear system $| 2(0) | $ induces a
double covering from $\Sigma$ onto $\mathbb{CP}^1$. So the moduli
space of flat $C_n$ bundles over $\Sigma$ is just isomorphic to
$S^n(\mathbb{CP}^1)=\mathbb{CP}^n$, the ordinary projective $n$
space.}

   As in $B_n$ case, we construct a Lie algebra bundle of $C_n$ type over
$Z_{2n}'$:
     $$\mathscr{C}_n=\mathcal {O}^{\bigoplus n}\bigoplus_{D\in R(C_n)}\mathcal{O}(D),$$
where $R(C_n)$ is the root system of $C_n$ according to
Section~\ref{non-simply->simply}:
$$R(C_n)=\{\pm 2(l_i-l_i^-),\pm ((l_i-l_i^-)\pm(l_j-l_j^-) )\ |\ i\neq j, 1\leq i,j\leq n \}.$$

      Recall \cite{LZ} the first fundamental representation bundle of $\mathscr{A}_{2n-1}$ is
$$\mathcal{V}_{2n-1}=\bigoplus\limits_{i=1}^{2n}\mathcal{O}(l_i).$$
      The condition that $x_i+x_{2n+1-i}=0,1\leq i\leq n$
is equivalent to an identification of the following two fundamental
representation bundles $\wedge^i(\mathcal{V}_{2n-1})$ and
$\wedge^{2n-i}(\mathcal{V}_{2n-1})$ with $i=1,\cdots,n-1$, which is given by (of course,
when restricted to $\Sigma$)
$$(\wedge^i(\mathcal{V}_{2n-1}))^*\otimes det(\mathcal{V}_{2n-1})\cong \wedge^{2n-i}(\mathcal{V}_{2n-1}).$$
Note that when restricted to $\Sigma$,  the line bundle
$det(\mathcal{V}_{2n-1})=\mathcal{O}(l_1+\cdots l_{2n})$ is
isomorphic to $\mathcal{O}(nf)|_{\Sigma}= \mathcal{O}_{\Sigma}(2n(0))$, by our assumption that $\sum x_i=0$. This identification
determines uniquely a flat $C_n$ bundle over $\Sigma$.

\subsection{The $G_2$ bundles}\label{G2}

    For $G=G_2$, we take $G'=D_4=Spin(8)$ such that $G=(G')^{Out(G')}$. \par
    Let $S=Y_4$ be a rational surface with a $D_4$-configuration
  \cite{LZ} which contains $\Sigma$
  as a smooth anti-canonical curve. Recall (\cite{LZ} or Figure 5) that $Y_4$ is
  a (successive) blow-up of $\mathbb{F}_1$ at $4$ points $x_1,\cdots,x_{4}$ on $\Sigma$, with corresponding
  exceptional classes $l_1,\cdots,l_4$. Let $f$ be the class of fibers in
   $\bbF_1$, and $s$ be the section such that  $0=s\cap \Sigma$ is the identity element of
   $\Sigma$.  The Picard group of $Y_4$ is $H^2(Y_4,\mathbb{Z})$, which is a lattice with basis
   $s,f,l_1,\cdots,l_{4}$. The canonical line bundle $K=-(2s+3f-\sum\limits_{i=1}^{4}l_i)$.\par

   Recall $$P_{4}:=\{x\in H^2(Y_4,\mathbb{Z})\ |\ x\cdot K=x\cdot f=0\} $$
   is a root lattice of $D_4$-type. And we can take a simple root system of $D_4$ as
$$\Delta(D_4)=\{\alpha_1=l_1-l_2,\alpha_2=f-l_1-l_2,\alpha_3=l_2-l_3,\alpha_4=l_3-l_4\}.$$
      Let $\rho\in Out(G')\cong S_3$ (the permutation group of $3$ letters ) be the triality automorphism of order $3$,
such that $\rho(\alpha_1)=\alpha_2$, $\rho(\alpha_2)=\alpha_4$,
$\rho(\alpha_4)=\alpha_1$, and $\rho(\alpha_3)=\alpha_3$.

   When the above simple root system is chosen, the pair $(S,\Sigma)$ determines a homomorphism
   $u\in Hom(\Lambda(G'),\Sigma)$
   which is given by the restriction map $$u(\alpha)=\calO(\alpha)|_{\Sigma}.$$

\begin{lemma}\label{G2-u} Let $u\in Hom(\Lambda(G'),\Sigma)$ correspond to the pair $(S,\Sigma)$,
where $S$ is a surface with a $D_4$-configuration. Then $\rho\cdot u=u$ if and only if
$2x_1=0$ and $x_1+x_4=x_2+x_3$.
\end{lemma}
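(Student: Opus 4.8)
The plan is to follow the same direct-computation strategy already used for $B_n$ (Lemma~\ref{Bn-u}) and $C_n$ (Lemma~\ref{Cn-u}): evaluate the homomorphism $u$ on the four simple roots $\alpha_1,\alpha_2,\alpha_3,\alpha_4$, and then rewrite the fixed-point condition $\rho\cdot u=u$ as a system of relations among the points $x_i\in\Sigma$. Since the simple roots form a $\bbZ$-basis of the root lattice $\Lambda(G')$, and both $u$ (a homomorphism) and $\rho$ (a lattice automorphism) respect the lattice structure, the equality $\rho\cdot u=u$ holds on all of $\Lambda(G')$ if and only if it holds on the four generators. Thus it suffices to compare $u(\rho(\alpha_i))$ with $u(\alpha_i)$ for $i=1,2,3,4$.

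First I would record the values of $u$. Using that the restriction map sends $\calO(l_i)|_{\Sigma}$ to the point $x_i$ and $\calO(f)|_{\Sigma}$ to $0$ (exactly as in the $B_n$ computation), one obtains $u(\alpha_1)=x_1-x_2$, $u(\alpha_2)=-x_1-x_2$, $u(\alpha_3)=x_2-x_3$, and $u(\alpha_4)=x_3-x_4$. Next I would impose $u(\rho(\alpha_i))=u(\alpha_i)$. The root $\alpha_3$ is fixed by $\rho$, so it contributes nothing; the remaining three generators form a single $\rho$-orbit $\alpha_1\mapsto\alpha_2\mapsto\alpha_4\mapsto\alpha_1$, giving the three equations $u(\alpha_2)=u(\alpha_1)$, $u(\alpha_4)=u(\alpha_2)$, and $u(\alpha_1)=u(\alpha_4)$. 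The first reads $-x_1-x_2=x_1-x_2$, i.e.\ $2x_1=0$; the third reads $x_1-x_2=x_3-x_4$, i.e.\ $x_1+x_4=x_2+x_3$. These are precisely the two asserted conditions.

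The one place needing any care — and the main (though mild) difference from the order-$2$ reductions — is that because $\rho$ has order $3$, the orbit produces three equations rather than one, so I must verify that they introduce no hidden extra constraint. The point is that the remaining (middle) equation $u(\alpha_4)=u(\alpha_2)$, namely $x_3-x_4=-x_1-x_2$, follows automatically from the other two: substituting $x_3-x_4=x_1-x_2$ and then using $2x_1=0$ gives $x_3-x_4=-x_1-x_2$. Hence the full system is equivalent to $2x_1=0$ together with $x_1+x_4=x_2+x_3$, establishing both directions of the claim. Everything beyond this consistency check is a routine substitution, identical in spirit to the simply-laced reductions carried out above.
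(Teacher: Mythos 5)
Your proposal is correct and follows essentially the same route as the paper: evaluate $u$ on the simple roots $\alpha_1,\alpha_2,\alpha_3,\alpha_4$, observe that $\rho$-invariance on the basis amounts to $u(\alpha_1)=u(\alpha_2)=u(\alpha_4)$, and translate this into $2x_1=0$ and $x_1+x_4=x_2+x_3$. The only addition is your explicit check that the third equation in the cyclic orbit is redundant, which the paper leaves implicit in its chain of equivalences; this is a harmless (and slightly more careful) elaboration, not a different method.
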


\begin{proof} Since  $u$ is the restriction map: $\alpha_i\mapsto
\calO(\alpha_i)|_{\Sigma}$,
$u(\alpha_1)=\calO(l_1-l_2)|_{\Sigma}=x_1-x_2$,
$u(\alpha_2)=-x_1-x_2$, $u(\alpha_4)=x_3-x_4$, and
$u(\alpha_3)=x_2-x_3$. Hence $\rho \cdot u=u$ $\Leftrightarrow$ $
u(\alpha_1)=u(\alpha_2)=u(\alpha_4)$  $\Leftrightarrow$ $
x_1-x_2=-x_1-x_2=x_3-x_4$ $\Leftrightarrow$ $
2x_1=0$ and $x_1+x_4=x_2+x_3$.\end{proof}

   Denote $\calS(\Sigma,G')$ the moduli space of $G'=D_4$-surfaces
with a fixed anti-canonical curve $\Sigma$, and
$\overline{\calS(\Sigma,G')}$ the natural compactification by including all rational surfaces with $D_4$-configurations.
From \cite{LZ} we know that
$\overline{\calS(\Sigma,G')}\isomap\mathcal {M}_{\Sigma}^{G'}$. Let
$\phi$ be the isomorphism.

\begin{corollary}\label{G2-bundle}  For
$u\in\mathcal{M}_{\Sigma}^{G}\hookrightarrow(\mathcal{M}_{\Sigma}^{G'})^{Out(G')}$,\
$\phi^{-1}(u)\in\overline{\mathcal {S}(\Sigma,G')}$ represents a
class of surfaces $Y_4(x_1,\cdots,x_4)$ with $x_1=0$ and
$x_4=x_2+x_3$, and such a surface corresponds to a boundary point in
the moduli space, that is, $\phi^{-1}(u)\in \overline{\mathcal
{S}(\Sigma,G')}\backslash\calS(\Sigma,G')$.
\end{corollary}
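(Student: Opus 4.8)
The plan is to follow the template already used for Corollary~\ref{Bn-bundle} and Corollary~\ref{Cn-bundle}: translate the triality-invariance condition of Lemma~\ref{G2-u} into constraints on the blown-up points $x_i = l_i\cap\Sigma$, pin down the correct connected component via Corollary~\ref{fixed-part}, and then compare with the general-position criterion from \cite{LZ}. Since $\phi:\overline{\calS(\Sigma,G')}\isomap\mathcal{M}_{\Sigma}^{G'}$ is an isomorphism, any $u$ transports back to an isomorphism class of surfaces $Y_4(x_1,\dots,x_4)$, so the whole argument reduces to reading off conditions on the $x_i$.

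First I would invoke Corollary~\ref{fixed-part}(i), which says that the fixed part $(\mathcal{M}_{\Sigma}^{G'})^{Out(G')}$ is cut out, up to $W(D_4)$-action, by the single condition $\rho(x)=x$ for the order-$3$ triality $\rho$. Applying Lemma~\ref{G2-u}, this condition is exactly $2x_1=0$ together with $x_1+x_4=x_2+x_3$. The torsion equation $2x_1=0$ is satisfied precisely by the four points of $\Sigma$ with $2x_1=0$ (namely $0$ and the three $2$-torsion points), and these separate the fixed part into connected components indexed by the choice of $x_1$; the remaining equation $x_1+x_4=x_2+x_3$ is affine-linear and hence does not disconnect anything further.

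Next I would select the correct component. By Corollary~\ref{fixed-part}(ii), $\mathcal{M}_{\Sigma}^{G}$ is precisely the component containing the trivial $G'$-bundle, which forces $x_1=0$; substituting into $x_1+x_4=x_2+x_3$ then gives $x_4=x_2+x_3$. This establishes the first assertion, that $\phi^{-1}(u)$ is represented by a surface $Y_4(x_1,\dots,x_4)$ with $x_1=0$ and $x_4=x_2+x_3$. Finally, to see that such a surface lies on the boundary I would recall from $\S4$ of \cite{LZ} that $Y_4(x_1,\dots,x_4)\in\calS(\Sigma,G')$ if and only if $0,x_1,\dots,x_4$ are in general position, which in particular requires $x_1\neq 0$. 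Since we have just shown $x_1=0$, the point $\phi^{-1}(u)$ cannot lie in $\calS(\Sigma,G')$, so $\phi^{-1}(u)\in\overline{\calS(\Sigma,G')}\backslash\calS(\Sigma,G')$.

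The only step I expect to demand genuine care is the component-counting in the second paragraph: because $Out(D_4)=S_3$ rather than $\bbZ_2$, I would want to double-check that reducing the full $S_3$-invariance to $\rho$-invariance (as licensed by Corollary~\ref{fixed-part}(i)) really does reproduce the same four components as in the $B_n$ case, and that the trivial bundle indeed sits in the $x_1=0$ component. Once that is confirmed, the remainder is a direct transcription of the $B_n$ argument and involves no new difficulty.
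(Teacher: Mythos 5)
Your proposal is correct and follows essentially the same route as the paper: apply Lemma~\ref{G2-u} to get the conditions $2x_1=0$ and $x_1+x_4=x_2+x_3$, identify the four connected components indexed by the $2$-torsion points, select the component of the trivial bundle (via Corollary~\ref{fixed-part}) to force $x_1=0$ and hence $x_4=x_2+x_3$, and conclude the boundary statement from the general-position criterion of \cite{LZ}. Your extra caution about $Out(D_4)=S_3$ versus $\bbZ_2$ is exactly what Corollary~\ref{fixed-part}(i) is there to dispose of, and the paper relies on it in the same implicit way.
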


 \begin{proof} By Lemma~\ref{G2-u}, $u\in(\mathcal{M}_{\Sigma}^{G'})^{Out(G')}$ if and only if
 $2x_1=0$ and $x_1+x_4=x_2+x_3$. There are $4$ connected components corresponding to $4$
 points of order $2$ on $\Sigma$. Since $\mathcal{M}_{\Sigma}^{G}$
 is the component containing the trivial $G'$ bundle, we have
 $x_1=0$  and $x_4=x_2+x_3$. Recall that $Y_4(x_1,\cdots,x_4)\in \calS(\Sigma,G')$
 if and only if $0,x_1,\cdots,x_4$ are in general position, which implies in particular $x_1\neq 0$.
 Hence $\phi^{-1}(u)$ corresponds to a boundary point.
 \end{proof}

   Denote $S=Y_4'(x_1,\cdots,x_4)$  the blow-up of $\mathbb{F}_1$ at
$4$ points $x_1,\cdots,x_4$ on $\Sigma$, with $x_1=0$  and
$x_4=x_2+x_3$. Let $l_1,\cdots,l_4$ be the corresponding exceptional
classes. We give the following definition.

\begin{definition}\label{G2-config}   {\rm A  {\it $G_2$-exceptional system} on $S$ is
an ordered triple  $(e_1,e_2,e_3,e_4)$ of exceptional divisors such that
$e_i\cdot e_j=0=e_i\cdot f,i\neq j$ and $y_1=0,\ y_4=y_2+y_3$
where $y_i=e_i\cdot
 \Sigma$. A $G_2$-{\it configuration} on $S$ is a $G_2$-exceptional
system $\zeta_{G_2}=(e_1,e_2,e_3,e_4)$ such that we can consider $S$ as
a blow-up of $\bbF_1$ at these $4$ points $y_1=0,y_2,y_3,y_4$ on
$\Sigma$, that is $S=Y_4'(y_1=0,y_2,y_3,y_4)$, with corresponding
exceptional divisors $e_1,e_2,e_3,e_4$.
When $S$ has a $G_2$-configuration (of course $\Sigma\in |-K_S|$),
we call $S$ a {\it (rational) surface with a $G_2$-configuration}.
  For $S=Y_4'(x_1,\cdots,x_4)$ with $x_1=0$ and $x_4=x_2+x_3$, when
$x_1,\pm x_2,\pm x_3,\pm x_4$ are distinct points on $\Sigma$, any
$G_2$-exceptional system on $S$ consists of exceptional curves. Such
a surface is called a $G_2$-{\it surface}. So a $G_2$-surface must
have a $G_2$-configuration. These four points
$x_1,x_2,x_3,x_4\in\Sigma$ are said to be {\it in general position}.
}\end{definition}

   A $G_2$-configuration is illustrated in Figure 6.

\begin{lemma}\label{W(G2)action} (i) Let $S=Y_4'(x_1,\cdots,x_4)$ with $x_1=0$ and $x_4=x_2+x_3$ be a surface with a
$G_2$-configuration. Then the Weyl group $W(G_2)$ acts on all
$G_2$-exceptional systems on $S$ simply
 transitively.\par
 (ii) Let $S$ be a $G_2$-surface. Then the Weyl group $W(G_2)$
acts on all $G_2$-config-urations on $S$ simply
 transitively.\end{lemma}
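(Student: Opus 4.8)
The plan is to prove~(i) by direct combinatorics and to deduce~(ii), just as in Lemmas~\ref{W(Bn)action} and~\ref{W(Cn)action}. First I would record that the only exceptional classes $e$ with $e\cdot f=0$ are $l_j$ and $f-l_j$ for $j=1,\dots,4$, and that their restrictions to $\Sigma$ are $l_j\cap\Sigma=x_j$ and $(f-l_j)\cap\Sigma=-x_j$; the latter uses $\calO_\Sigma(f)\cong\calO_\Sigma(2(0))$, i.e.\ that the ruling restricts to the double cover attached to $|2(0)|$, exactly as in Subsection~\ref{Bn-surface}. Because $l_j\cdot(f-l_j)=1\neq0$, the orthogonality requirement forces a $G_2$-exceptional system $(e_1,e_2,e_3,e_4)$ to contain exactly one class from each pair $\{l_j,f-l_j\}$, the four pairs distributed one per slot. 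The condition $y_1=0$ then assigns the pair $\{l_1,f-l_1\}$ to slot~$1$ (as $x_2,x_3,x_4\neq0$), while the condition $y_4=y_2+y_3$ together with the defining relation $x_4=x_2+x_3$ picks out the admissible orderings-with-signs of $\{l_2,f-l_2\},\{l_3,f-l_3\},\{l_4,f-l_4\}$ on slots $2,3,4$.

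Next I would make the $W(G_2)$-action explicit. By Remark~\ref{Weyl-group} we have $W(G_2)=\langle\,\tau,\,S_{\alpha_3}\,\rangle$ with $\tau=S_{\alpha_1}\circ S_{\alpha_2}\circ S_{\alpha_4}$, and a direct computation on the lattice basis using $S_\alpha(x)=x+(x,\alpha)\alpha$ gives, as permutations of the eight classes above (all fixing $f$),
$$\tau=(l_1,f-l_1)(l_2,f-l_2)(l_3,l_4)(f-l_3,f-l_4),\qquad S_{\alpha_3}=(l_2,l_3)(f-l_2,f-l_3).$$
One checks $\tau^2=S_{\alpha_3}^2=1$ and that $\tau\circ S_{\alpha_3}$ has order $6$, so $W(G_2)$ is the dihedral group of order $12$. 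A direct check on these two generators confirms that they carry $G_2$-exceptional systems to $G_2$-exceptional systems (this is the geometric shadow of $W(G_2)$ being the triality-fixed subgroup of $W(D_4)$, so that it preserves the $\rho$-invariance conditions of Lemma~\ref{G2-u}); hence $W(G_2)$ genuinely acts on the set of $G_2$-exceptional systems.

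Finally I would start from the standard admissible system $\zeta_0=(l_1,l_2,l_3,l_4)$, which satisfies $y_1=x_1=0$ and $y_4=x_4=x_2+x_3=y_2+y_3$, and compute its $W(G_2)$-orbit. The step I expect to be the real obstacle is the exact count: a priori slot~$1$ accepts both $l_1$ and $f-l_1$, so the naive number of admissible systems is twice $|W(G_2)|$. The resolution is the parity invariant $\epsilon(\zeta)=\prod_i\operatorname{sign}(e_i)$ with $\operatorname{sign}(l_j)=+1$, $\operatorname{sign}(f-l_j)=-1$: both generators toggle an even number of signs, so $\epsilon$ is $W(G_2)$-invariant, and the admissible systems split into two classes of twelve, interchanged by the flip $l_1\leftrightarrow f-l_1$ and identified once one uses $\calO_\Sigma(l_1)\cong\calO_\Sigma(f-l_1)$ (valid since $x_1=0$). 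Computing the orbit of $\zeta_0$ and verifying that the twelve elements of $W(G_2)$ yield twelve distinct systems exhausting the class of $\zeta_0$, one gets transitivity on a twelve-element set by a group of order twelve, which is automatically free and hence simply transitive; this is~(i). For~(ii), on a $G_2$-surface the points are in general position, so every class in a $G_2$-exceptional system is an honest irreducible $(-1)$-curve and ``configuration'' coincides with ``system'', whence~(ii) follows at once from~(i).
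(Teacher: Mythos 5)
Your route is the same as the paper's: realize $W(G_2)$ inside $W(D_4)$ as $\langle S_{\alpha_1}S_{\alpha_2}S_{\alpha_4},\,S_{\alpha_3}\rangle$, compute its action on the eight classes $l_j$, $f-l_j$, enumerate the admissible tuples, and verify simple transitivity by direct computation; your explicit permutations and the identification of $W(G_2)$ as the dihedral group of order $12$ are correct, and they are exactly what lies behind the paper's terse ``there are $12$ $G_2$-configurations \dots we can directly check''. Where you differ is that you face the count honestly: Definition~\ref{G2-config}, read literally, admits $24$ exceptional systems (slot $1$ may carry $l_1$ or $f-l_1$ independently of the rest), while $|W(G_2)|=12$, and your parity $\epsilon$ correctly splits them into two free orbits of size $12$. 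The paper never addresses this factor of $2$ --- the examples it lists all have $\epsilon=+1$ --- so on this point your write-up is more careful than the source.

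The genuine gap is in how you dispose of the second orbit, and it surfaces in your proof of (ii). The two parity classes cannot be ``identified'' via $\calO_\Sigma(l_1)\cong\calO_\Sigma(f-l_1)$: they consist of different tuples of classes in the Picard group of $S$, and, more to the point, they are geometrically inequivalent. A $G_2$-exceptional system is a $G_2$-configuration only if blowing down its four classes returns $\bbF_1$. Now a section class of the ruling disjoint from the four blown-down classes has the form $s+af-\sum_{j\in J}l_j$, where $J$ is the set of indices with $f-l_j$ in the tuple; its self-intersection is $2a-1-|J|$, so the blow-down is $\bbF_m$ with $m\equiv 1+|J|\pmod 2$, and blow-down to $\bbF_1$ forces $|J|$ even, i.e.\ $\epsilon=+1$. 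Concretely, $(f-l_1,l_2,l_3,l_4)$ blows down to $\bbF_2$ (the curve in the class $s-l_1$ becomes a section of square $-2$; here one uses $x_1=0\in s$), and $(l_1,l_4,f-l_2,l_3)$ blows down to $\bbF_0$; both are exceptional systems consisting of honest $(-1)$-curves on a $G_2$-surface, yet neither is a configuration. Hence your claim that on a $G_2$-surface ``configuration coincides with system'' is false --- it even contradicts your own count, since it would give $24$ configurations against $|W(G_2)|=12$. The correct conclusion, which your computation already contains, is that the configurations are exactly the $\epsilon=+1$ systems, i.e.\ exactly the $W(G_2)$-orbit of $(l_1,l_2,l_3,l_4)$; that gives (ii), and it is also the precise sense in which (i) must be read (each of your ``identified pairs'' contains exactly one configuration).
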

\begin{proof} It suffices to prove (i). By an explicit computation, there
are $12$ $G_2$-configurations: $(l_1,l_2,l_3,l_4)$, $(f-l_1,f-l_2,f-l_3,f-l_4)$,
$(f-l_1,f-l_2,l_4,l_3)$, $(f-l_1,l_4,f-l_2,l_3)$, and so on. The rule is keeping
the relation $x_2+x_3=x_4$ fixed. The Weyl group $W(G_2)$ is the
automorphism group of the sub-root system $A_2$ with simple roots
$\{3(l_2-l_3),3(l_3-(f-l_4))\}$, so $W(G_2)\cong \mathbb{Z}_2
\rtimes W(A_2)=\mathbb{Z}_2 \rtimes S_3$. We can also consider
$W(G_2)$ as the subgroup of $W(D_4)$ generated by two elements
$S_{\alpha_1}S_{\alpha_2}S_{\alpha_4}$ and $S_{\alpha_3}$, where
$S_{\alpha}$ means the reflection with respect to a root $\alpha$ of
$D_4$. Thus we can directly
check that $W(G_2)$ acts on all $G_2$-exceptional systems simply
transitively. \end{proof}

\begin{proposition}\label{G2-moduli} Let $\mathcal {S}(\Sigma,G_2)$ be the moduli space
of pairs $(Y_4',\Sigma)$ where $Y_4'$ is a $G_2$-surface, and
$\mathcal{M}_{\Sigma}^{G_2}$ be the moduli space of flat $G_2$
bundles over $\Sigma$. Then we have \par (i) $\mathcal
{S}(\Sigma,G_2)$ is embedded into $\mathcal{M}_{\Sigma}^{G_2}$ as an
open dense subset. \par
 (ii) Moreover, this embedding
can be extended naturally to an isomorphism $$\overline{\mathcal
{S}(\Sigma,G_2)}\cong \mathcal{M}_{\Sigma}^{G_2},$$ by including all
 rational surfaces with $G_2$-configurations.
\end{proposition}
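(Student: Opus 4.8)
The plan is to follow verbatim the strategy used for $B_n$ and $C_n$ (Propositions~\ref{Bn-moduli} and~\ref{Cn-moduli}), since all the structural ingredients are already in place. First I would record the identification
$$\mathcal{M}_{\Sigma}^{G_2}\cong \Lambda_c(G_2)\otimes_{\bbZ}\Sigma/W(G_2)\cong Hom(\Lambda(G_2),\Sigma)/W(G_2),$$
the second isomorphism holding once the relevant root of unity of $Jac(\Sigma)\cong\Sigma$ has been fixed, exactly as in Section 3 of \cite{LZ}. The restriction from $S$ to $\Sigma$ then defines a map $\phi:\mathcal{S}(\Sigma,G_2)\rightarrow Hom(\Lambda(G_2),\Sigma)/W(G_2)$, and this map is well-defined because, by Lemma~\ref{W(G2)action}, fixing a $G_2$-configuration on a $G_2$-surface $S$ is the same as fixing a system of simple roots $\Delta(G_2)$.

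The computational heart is the injectivity of $\phi$, which I would verify through an explicit linear system just as before. Using the simple roots $\beta_1=\frac{1}{3}(\alpha_1+\alpha_2+\alpha_4)$ and $\beta_2=\alpha_3$ from Section~\ref{non-simply->simply}, together with the defining constraints $x_1=0$ and $x_4=x_2+x_3$ of a surface $Y_4'$, I compute $u(\alpha_i)=\calO(\alpha_i)|_{\Sigma}$ exactly as in Lemma~\ref{G2-u} and obtain
$$\left\{\begin{array}{l} u(\beta_1)=-x_2=p_1,\\ u(\beta_2)=x_2-x_3=p_2. \end{array}\right.$$
For prescribed $p_1,p_2$ this has the unique solution $x_2=-p_1,\ x_3=-(p_1+p_2)$, whence $x_4=-(2p_1+p_2)$; so the blown-up points are recovered uniquely from $u$, and $\phi$ is injective. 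Since a $G_2$-surface is precisely the case in which $x_1,\pm x_2,\pm x_3,\pm x_4$ are in general position, the image is the open dense locus, which gives the embedding in (i).

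For part (ii), I would invoke Corollary~\ref{G2-bundle}: every $u\in\mathcal{M}_{\Sigma}^{G_2}\hookrightarrow(\mathcal{M}_{\Sigma}^{G'})^{Out(G')}$ has $\phi^{-1}(u)$ equal to a surface $Y_4(x_1,\ldots,x_4)$ with $x_1=0$ and $x_4=x_2+x_3$, i.e.\ a rational surface with a $G_2$-configuration. Combined with the unique solvability of the linear system above --- which produces such a surface for every value of $(p_1,p_2)$ --- this extends $\phi$ to a bijection $\overline{\mathcal{S}(\Sigma,G_2)}\to\mathcal{M}_{\Sigma}^{G_2}$, and matching of the orbifold structures (both sides have dimension $2=r(G_2)$) upgrades it to an isomorphism.

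The step I expect to require the most care is the set-up of the identification $\mathcal{M}_{\Sigma}^{G_2}\cong Hom(\Lambda(G_2),\Sigma)/W(G_2)$ and its embedding into $(\mathcal{M}_{\Sigma}^{G'})^{Out(G')}$. Because the triality $\rho$ has order $3$, the short root $\beta_1$ carries the factor $\frac{1}{3}$, so one must ensure that $u$ is genuinely defined on the full $G_2$ lattice $\Lambda(G_2)$ --- making $u(\beta_1)=-x_2$ unambiguous --- rather than only on the sublattice pulled back from $D_4$, where one would see merely $u(3\beta_1)=-3x_2$ with a residual $3$-torsion ambiguity. Selecting the correct connected component, namely the one containing the trivial bundle and forcing $x_1=0$, is exactly what Corollaries~\ref{fixed-part} and~\ref{G2-bundle} supply, so the difficulty is bookkeeping around the triality rather than a genuinely new idea; the remainder is a transcription of the $B_n$ argument.
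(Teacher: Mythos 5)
Your proposal is correct and follows the paper's strategy: the paper's own proof of Proposition~\ref{G2-moduli} is literally a two-item delta against the $B_n$ argument (Proposition~\ref{Bn-moduli}), namely (i) a choice of $\Delta(G_2)$ and (ii) the resulting linear system, supported by the same ingredients you invoke (Lemma~\ref{W(G2)action} for well-definedness, Corollaries~\ref{fixed-part} and~\ref{G2-bundle} for selecting the component containing the trivial bundle in part (ii)). The one substantive divergence is the normalization of the simple roots. The paper takes honest divisor classes $\beta_1=f-2l_2+l_3-l_4$ and $\beta_2=3(l_2-l_3)$ (three times your roots), so its $u$ is literally a restriction of line bundles, but its linear system is $3x_2=-p_1$, $3(x_2-x_3)=p_2$, whose solutions are unique only up to $3$-torsion of $\Sigma$; the paper carries over the phrase ``the solution exists uniquely'' from the $B_n$ case without addressing this. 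You instead use the folded roots $\beta_1=\frac{1}{3}(\alpha_1+\alpha_2+\alpha_4)$, $\beta_2=\alpha_3$ of Section~\ref{non-simply->simply}, which are not classes in $\mathrm{Pic}(S)$; but, as your computation via Lemma~\ref{G2-u} shows, the configuration conditions $x_1=0$ and $x_4=x_2+x_3$ force $u(\alpha_1)=u(\alpha_2)=u(\alpha_4)=-x_2$, so the restriction map descends canonically to the finer $G_2$ lattice (this equality of the three restrictions, rather than the component selection, is what really removes the $3$-torsion ambiguity you flag), and your system $-x_2=p_1$, $x_2-x_3=p_2$ is then uniquely solvable on the nose. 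In short, your variant buys genuine unique solvability --- hence injectivity and the extension in (ii) without torsion caveats --- at the cost of a short descent argument, whereas the paper's choice buys a literal restriction map at the cost of a torsion ambiguity left to the normalization conventions of \cite{LZ}; for $G_2$ your route is arguably the cleaner one, since $\Lambda(G_2)=\Lambda_w(G_2)$ makes the identification $Hom(\Lambda(G_2),\Sigma)\cong\Lambda_c(G_2)\otimes\Sigma$ canonical with no auxiliary root of unity.
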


\begin{proof} We just  note that only the following two things are different
from their counterparts of the proofs in $B_n,C_n$ cases. \par (i) Take
a simple root system of $G_2$ as
$$\Delta(G_2)=\{\beta_1=f-2l_2+l_3-l_4,\ \beta_2=3(l_2-l_3)\}.$$\par
(ii) Then the restriction to $\Sigma$ gives us  the following system
of linear equations:
$$\left \{ \begin{array}{l}  3x_2=-p_1,\\
      3(x_2-x_3)=p_2.
      \end{array}
\right.$$
\end{proof}

  As before, we construct a Lie algebra bundle of $G_2$-type over $S=Y_4'$.
For brevity, denote $\varepsilon_1=l_2,\ \varepsilon_2=l_3,\mbox{
and } \varepsilon_3=f-l_4$. Then
     $$\mathscr{G}_2=\mathcal {O}^{\bigoplus 2}\bigoplus_{D\in R(G_2)}\mathcal{O}(D),$$
where $R(G_2)$ is the root system of $G_2$:
$$R(G_2)=\{\pm 3(\varepsilon_i-\varepsilon_j),\pm (2\varepsilon_i-\varepsilon_j-\varepsilon_k)\ |\ i\neq j\neq k, 1\leq i,j,k\leq 3
\}.$$

   Recall \cite{Leung} the $3$ fundamental representation bundles of rank $8$ of $D_4$ are
defined as:

  $$\left \{ \begin{array}{l}  \mathscr{W}_4=\bigoplus\limits_{C^2=C\cdot K=-1,C\cdot f=0}\calO(C),\\
      \mathcal{S}^{+}_4=\bigoplus\limits_{D^2=D\cdot K=-1, D\cdot
      f=1}\mathcal{O}(D),\\
    \mathcal{S}^{-}_4=\bigoplus\limits_{T^2=-2,T\cdot K=0, T\cdot f=1}\mathcal{O}(T).
      \end{array}
  \right. $$\\

     These conditions $x_1=0,\ x_4=x_2+x_3$ enable us to identify $S^+_4,S^-_4$ and $\mathscr{W}_4$ when restricted to
 $\Sigma$, by
  $$S^+_4\otimes\mathcal{O}(-l_1)\cong S^-_4\mbox{ and }S^+_4\cong \mathscr{W}_4\otimes\mathcal{O}(s).$$
   And these identifications determine uniquely a flat $G_2$ bundle
   over $\Sigma$. Conversely, the identification of these three bundles restricted to $\Sigma$
   implies the conditions $x_1=0$ and $x_4=x_2+x_3$ (up to renumbering). Note that
    \begin{eqnarray}&&\mathscr{W}_4|_{\Sigma}=\bigoplus\calO_{\Sigma}(l_i)\bigoplus\calO_{\Sigma}(f-l_i)=\bigoplus\calO((x_i))\bigoplus\calO((-x_i)),\nonumber\\
    &&\mathcal{S}^{-}_4|_{\Sigma}=\bigoplus\limits_{i}\calO((0)-(x_i))\bigoplus\limits_{j}\calO(3(0)-\sum\limits_{i\neq j} (x_i)),\mbox{ and }\nonumber\\
    &&\mathcal{S}^{+}_4|_{\Sigma}=\calO((0))\bigoplus\limits_{i\neq j}\calO((-x_i-x_j))\bigoplus\calO((-\sum x_i)).\nonumber\end{eqnarray}
     So $\mathscr{W}_4|_{\Sigma}=\mathcal{S}^{-}_4$ implies $x_1=0$, and $\mathscr{W}_4|_{\Sigma}=\mathcal{S}^{+}_4$
     implies $x_4=x_2+x_3$.

\subsection{The $F_4$ bundles}\label{F4}

        First we recall some fundamental facts on $E_6$ root systems and cubic surfaces, which are of
    independent interest.
\subsubsection{The root system of $E_6$, revisited }\label{E6}

 The relation between the root system of $E_6$-type and  smooth cubic
surfaces in $\mathbb{CP}^3$ has been studied for a very long time
\cite{Hart}\cite{Demazure}\cite{Manin}. There are $27$ {\it lines} on such
a cubic surface $S$ (a curve on $S$ is a line if and only if it is
an exceptional curve). And every $E_6$-{\it exceptional system} on $S$
is an ordered $6$-tuples of lines $(e_1,\cdots,e_6)$ which are
pairwise disjoint.  The Weyl group $W(E_6)$ is the symmetry group of
all $E_6$-exceptional systems, that is, $W(E_6)$ acts simply
transitively on the set of all $E_6$-exceptional systems. Now we
consider the unordered $6$-tuple $L=\{e_1,\cdots,e_6\}$. There are
$72$ such $6$-tuples. This corresponds to $36$ {\it Schl$\ddot{a}$fli's
double-sixes} $\{L;L'\}$ \cite{Hart}. In the following we consider a
cubic surface $S$ as the blow-up of $\bbP^2$ at $6$ points
$x_1,\cdots,x_6$ in general position, that is
$S=X_6(x_1,\cdots,x_6)$, with corresponding exceptional curves
$l_1,\cdots,l_6$. Fix a simple root system of $E_6$ as
$$\Delta(E_6)=\{\alpha_1,\cdots,\alpha_6\},$$ where
$\alpha_1=l_1-l_2$, $\alpha_2=l_2-l_3$, $\alpha_3=h-l_1-l_2-l_3$,
and $\alpha_i=l_{i-1}-l_i$, for $i=4,5,6$ \cite{LZ}.

\begin{lemma} One double-six $\{L;L'\}$ corresponds to exactly one positive
root of $E_6$.\end{lemma}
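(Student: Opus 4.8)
The plan is to set up an explicit bijection between the set of $36$ double-sixes $\{L;L'\}$ and the set of $36$ positive roots of $E_6$, by exploiting the fact (recalled in the excerpt) that $W(E_6)$ acts simply transitively on ordered $E_6$-exceptional systems and that the root lattice is realized inside $H^2(X_6,\mathbb{Z})$ as $P_6 = \{x \mid x\cdot K = 0\}$. First I would fix the reference system $L_0 = \{l_1,\dots,l_6\}$, corresponding to the blow-up data, together with the simple roots $\Delta(E_6)$ given in the statement. The key geometric input is the classical structure of a double-six: for each positive root $\alpha$ I want to produce a distinguished unordered pair $\{L;L'\}$ of six-tuples of pairwise-disjoint lines, where $L'$ is the "partner" six-six whose lines each meet all but one line of $L$.

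The cleanest route is the dimension/counting match. There are exactly $36$ positive roots in $E_6$ (half of the $72$ roots), and exactly $36$ double-sixes; so it suffices to write down a well-defined map in one direction and check injectivity, or equivalently to exhibit a $W(E_6)$-equivariant correspondence. Concretely, the plan is: to a double-six $\{L;L'\}$ associate the lattice vector $\alpha = \tfrac{1}{3}\bigl(\sum_{e\in L} e - \sum_{e'\in L'} e'\bigr)$ (or a suitable integral normalization thereof), and verify that this lies in $P_6$, has self-intersection $-2$, and satisfies $\alpha\cdot K = 0$, hence is a root; the unordered nature of $\{L;L'\}$ matches the sign ambiguity $\alpha \leftrightarrow -\alpha$, so passing to the unordered pair picks out a positive root after a choice of positivity. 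I would verify this on the standard double-six $\{(l_1,\dots,l_6);(h-l_2-\dots-\hat{l_i}-\dots,\,\dots)\}$, i.e. the six-six $L' = \{2h - \sum_{j\neq i} l_j\}_{i}$, and compute that the associated vector is indeed a root; the general case then follows by transporting via $W(E_6)$, which permutes both double-sixes and roots compatibly.

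The main steps, in order, are: (1) recall the explicit description of the $36$ double-sixes in terms of the $27$ lines $\{l_i,\ 2h-\textstyle\sum_{j\neq i}l_j,\ h-l_i-l_j\}$, following Hartshorne; (2) for the reference double-six compute the difference vector and confirm it is a root of $E_6$ using the intersection form and the identification $P_6 \cong \Lambda(E_6)$; (3) check that the construction is $W(E_6)$-equivariant, so that since $W(E_6)$ acts transitively on double-sixes (as it does on exceptional systems) the map is surjective onto $W(E_6)$-orbits of roots, i.e. onto all roots; and (4) count to conclude bijectivity between unordered double-sixes and positive roots, $36 = 36$.

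The hard part will be step (2)–(3): pinning down the precise integral normalization so that the associated vector is genuinely a root (self-intersection $-2$, orthogonal to $K$) rather than merely a lattice element, and checking that the unordered/ordered and $\pm$ ambiguities line up correctly so that the map descends to a clean bijection with \emph{positive} roots. Once the reference case is computed explicitly and equivariance under the generators $S_{\alpha_i}$ of $W(E_6)$ is verified, the transitivity of the Weyl action (already used for exceptional systems in the excerpt) does the rest, and the counting argument closes the proof.
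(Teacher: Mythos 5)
Your proposal is correct in substance but takes a genuinely different route from the paper. The paper defines the correspondence through the reflection that interchanges the two sixes: writing an arbitrary double-six as $\{g(L_0);g(L_0')\}$ with $g\in W(E_6)$, where $L_0'=S_{\alpha_0}(L_0)$ and $\alpha_0=2h-\sum_i l_i$, it deduces from $g(L_0')=S_{\alpha}(g(L_0))$ that $S_{\alpha}=gS_{\alpha_0}g^{-1}=S_{g(\alpha_0)}$, hence $\alpha=\pm g(\alpha_0)$; uniqueness/injectivity is then proved with no counting at all, via the stabilizer theorem (Humphreys, p.~44): if $g(\alpha_0)=\alpha_0$ then $g$ lies in the $S_6$ permuting $l_1,\dots,l_6$, so the double-six is unchanged. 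You instead attach to $\{L;L'\}$ an explicit lattice vector, verify it is a root on the reference double-six, and conclude by equivariance, transitivity on double-sixes, and the count $36=36$; this trades the stabilizer theorem for Schl\"afli's enumeration of the $36$ double-sixes (recorded in the paper just before the lemma), and what it buys is a formula manifestly independent of all orderings together with an explicit surjectivity statement. Two repairs are needed, both of which you yourself flagged as the delicate points. First, the normalization: for the reference double-six one computes $\sum_{e'\in L_0'}e'-\sum_{e\in L_0}e=\bigl(12h-5\sum_j l_j\bigr)-\sum_j l_j=6\alpha_0$, so the correct factor is $\tfrac16$, not $\tfrac13$; your vector as written equals $\pm 2\alpha_0$, which has self-intersection $-8$ and is not a root. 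Second, transitivity of $W(E_6)$ on double-sixes needs the classical fact (Hartshorne) that a six of pairwise disjoint lines determines its partner six uniquely, since otherwise $g(L)=M$ need not give $g(L')=M'$; the paper's own proof uses the same fact implicitly, so citing it suffices, but it should be said.
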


\begin{proof} First take $L_0=\{l_1,\cdots,l_6\}$, then
$L_0'=\{l_1',\cdots,l_6'\}=s_{\alpha_0}(L_0)$ where
$\alpha_0=2h-\sum l_i$ is a positive root and
$l_i'=s_{\alpha_0}(l_i)=2h-\sum_{j\neq i}l_j$. $\{L_0;L_0'\}$ forms
a double-six and $\alpha_0(\succ 0)$ is uniquely determined by
$\{L_0;L_0'\}$, since $W(E_6)$ acts simply and transitively. If
$L=g(L_0)$ with $g\in W(E_6)$, then $\{g(L_0);g(L_0')\}$ is also a
double-six. Let $g(L_0')=S_{\alpha}(g(L_0))$, then
$L_0'=(g^{-1}S_{\alpha}g)(L_0)$. So
$g^{-1}S_{\alpha}g=S_{\alpha_0}$. Then $S_{\alpha}=gS_{\alpha_0}
g^{-1}=S_{g(\alpha_0)}$. This implies $\alpha=\pm g(\alpha_0)$. Take
$\alpha\succ 0$. Now if $\alpha=\alpha_0$, then by a result in page 44 of
\cite{Humph2}, $g\in S_6$, that is, $g$ is a permutation of the six lines $l_i$'s.  Thus
$\{L;L'\}\mbox{
and }\{L_0;L_0'\}$ are the same one.      \end{proof}

{\bf Remark}  {\rm Let $\rho$ be an outer automorphism of $E_6$ of order
$2$, such that $\rho(\alpha_1)=\alpha_6,
\rho(\alpha_2)=\alpha_5\mbox{ and }\rho \mbox{ fixes other simple
roots} $. Consider $F_4$ as the fixed part of $E_6$ by $\rho$. Then
the coroot lattice $\Lambda_c(F_4)$ of $F_4$ is
\begin{eqnarray} \Lambda_c(F_4)&=& \Lambda_c(E_6)^{\rho} \nonumber \\
\quad &=& \Lambda(E_6)^{\rho} \nonumber \\
\quad &=& \{a h+\sum a_i l_i\ |\ a_1+a_6=a_2+a_5=a_3+a_4=-a\} \nonumber \\
\quad &=& \mathbb{Z}\langle h-l_1-l_2-l_3,l_1-l_6,l_2-l_5,l_3-l_4\rangle \nonumber \\
\quad &=& \Lambda(D_4). \nonumber \end{eqnarray}

And the Weyl group of $F_4$ is
\begin{eqnarray} W(F_4)&=&\{w\in
W(E_6)\ |\ w \mbox{ preserves }
\Lambda_c(F_4)= \Lambda(D_4)\}\nonumber\\ &=& Aut(\Lambda(D_4))\nonumber\\
&=& S_3 \rtimes  W(D_4).\nonumber\end{eqnarray} }

{\bf Remark} {\rm If $3$ lines $e_1,e_2,e_3$ pairwise intersect, we say that
they form a {\it triangle}. Denote by $\Delta=\{e_1,e_2,e_3\}$ a
(unordered) triangle, and by $\overrightarrow{\Delta}=(e_1,e_2,e_3)$
an ordered triangle. Every line belongs to $5$ triangles, so there are
$27\cdot 5/3=45$ triangles. And if $\{e_1,e_2,e_3\}$ is a triangle,
then $-K=e_1+e_2+e_3$. $W(E_6)$ acts on all these  $45$ triangles
transitively, and $W(F_4)$ is the isotropy subgroup of the triangle
$\Delta_0=\{h-l_1-l_6,h-l_2-l_5,h-l_3-l_4\}$. Moreover $W(D_4)$ is
the isotropy subgroup of the ordered triangle
$\overrightarrow{\Delta}_0=(h-l_1-l_6,h-l_2-l_5,h-l_3-l_4)$. The
reason is the following: \par
   Let $\Delta=\{e_1,e_2,e_3\}$ and
$\Delta'=\{f_1,f_2,f_3\}$ be any two triangles. Since $K^2=3$, the
position of these two triangles must be one of the following two
cases. (1) They have a common edge and other edges don't intersect.
(2) Each edge of $\Delta$ intersects with exactly one edge of
$\Delta'$. So we just check two special triangles in above cases.
what remains to do is a direct checking. \par
   From above we can easily write down the $45$
(left or right) cosets of $W(F_4)$ in $W(E_6)$.}

\subsubsection{$F_4$ bundles and rational
surfaces}\label{F4-surface}
For $G=F_4$ we take $G'=E_6$, such that $F_4=(E_6)^{Out(E_6)}$.\par
   Let $S=X_6(x_1,\cdots,x_6)$ be a surface
with an $E_6$-configuration (Figure 7), that is, $S$ is a blow-up of
$\bbP^2$ at $6$ points $x_1,\cdots,x_6\in\Sigma$, where
$\Sigma\in|-K_S|$. Take the simple root system $\Delta(E_6)$ and
$\rho\in Out(E_6)$ just as in Section~\ref{E6}. \par
   Once a simple root system is fixed, the restriction from $S$ to
   $\Sigma$ induces a homomorphism $u\in Hom(\Lambda(E_6),\Sigma)$.

\begin{lemma}\label{F4-f} Let $u\in Hom(\Lambda(E_6),\Sigma)$ be an element corresponding to a pair $(S,\Sigma)$,
where $S$ is a surface with an $E_6$-configuration. Then $\rho
\cdot u=u$ if and only if $x_1+x_6=x_2+x_5=x_3+x_4$.\end{lemma}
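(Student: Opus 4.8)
The plan is to follow verbatim the template established in the proofs of Lemmas~\ref{Bn-u}, \ref{Cn-u} and \ref{G2-u}. Since $u\in Hom(\Lambda(E_6),\Sigma)$ is a group homomorphism, it is determined by its values on the simple roots $\alpha_1,\dots,\alpha_6$, and since $\rho$ permutes these simple roots, $\rho$-invariance $\rho\cdot u=u$ is equivalent to the finite system of equations $u(\alpha_i)=u(\rho(\alpha_i))$ for $i=1,\dots,6$. The automorphism $\rho$ fixes $\alpha_3$ and $\alpha_4$, so those two indices impose no condition; all the content comes from the two swapped pairs $\{\alpha_1,\alpha_6\}$ and $\{\alpha_2,\alpha_5\}$.

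First I would record the four relevant restrictions. Because $\alpha_1=l_1-l_2$, $\alpha_2=l_2-l_3$, $\alpha_5=l_4-l_5$ and $\alpha_6=l_5-l_6$ are all differences of exceptional classes, their images under $u$ involve none of the hyperplane class $h$: explicitly $u(\alpha_1)=\calO(l_1-l_2)|_\Sigma=x_1-x_2$, $u(\alpha_2)=x_2-x_3$, $u(\alpha_5)=x_4-x_5$ and $u(\alpha_6)=x_5-x_6$, where $x_i=l_i\cap\Sigma$. This is the pleasant feature of the computation: one never needs to evaluate $\calO(h)|_\Sigma$, so no auxiliary relation on $\Sigma$ enters, in contrast to what one might fear from the presence of $h$ in $\alpha_3$.

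Then I would translate the two equations. The pair $\{\alpha_1,\alpha_6\}$ gives $x_1-x_2=x_5-x_6$, i.e. $x_1+x_6=x_2+x_5$, and the pair $\{\alpha_2,\alpha_5\}$ gives $x_2-x_3=x_4-x_5$, i.e. $x_2+x_5=x_3+x_4$. Chaining these two equalities yields exactly $x_1+x_6=x_2+x_5=x_3+x_4$; conversely this chain plainly forces both equations, which establishes the stated equivalence.

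There is no serious obstacle here. The only points that merit a line of justification are the reduction of $\rho$-invariance to the simple-root check (just linearity of $u$ together with the fact that the simple roots generate $\Lambda(E_6)$), and the convention $(\rho\cdot u)(\alpha)=u(\rho^{-1}\alpha)$, which is harmless since $\rho$ is an involution, so that $u(\alpha_i)=u(\rho(\alpha_i))$ is symmetric in the two members of each swapped pair. I would keep the write-up to the display of the four restrictions and the two resulting equalities, exactly mirroring the earlier lemmas in this section.
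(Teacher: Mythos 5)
Your proof is correct and takes essentially the same route as the paper's: both reduce $\rho$-invariance to the two equalities $u(\alpha_1)=u(\alpha_6)$ and $u(\alpha_2)=u(\alpha_5)$, compute the same four restrictions $u(\alpha_1)=x_1-x_2$, $u(\alpha_2)=x_2-x_3$, $u(\alpha_5)=x_4-x_5$, $u(\alpha_6)=x_5-x_6$, and chain the resulting identities into $x_1+x_6=x_2+x_5=x_3+x_4$. The only difference is that you make explicit the reduction of invariance to a check on simple roots and the involution convention for $\rho\cdot u$, which the paper leaves implicit.
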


\begin{proof} Since $u$ is induced by the restriction to $\Sigma$,
$u(\alpha_1)=\calO(l_1-l_2)|_{\Sigma}=x_1-x_2$,
$u(\alpha_2)=x_2-x_3$, $u(\alpha_5)=x_4-x_5$, $u(\alpha_6)=x_5-x_6$.
Therefore $\rho\cdot u=u$ $\Leftrightarrow$ $
u(\alpha_1)=u(\alpha_6),u(\alpha_2)=u(\alpha_5)$ $\Leftrightarrow$ $
 x_1+x_6=x_2+x_5=x_3+x_4$. \end{proof}

Denote $\calS(\Sigma,E_6)$ the moduli space of $G'=E_6$-surfaces
\cite{LZ} with a fixed anti-canonical curve $\Sigma$, and
$\overline{\calS(\Sigma,E_6)}$ the natural compactification by
including all rational surfaces with
$E_6$-configurations. From  \cite{LZ} we know that there is an isomorphism
$\phi:\overline{\calS(\Sigma,E_6)}\isomap\mathcal {M}_{\Sigma}^{E_6}$. Thus we have

\begin{corollary}\label{F4-bundle}  For $u\in\mathcal{M}_{\Sigma}^{F_4}\subset (\mathcal {M}_{\Sigma}^{E_6})^{Out(E_6)}$,
$\phi^{-1}(u)\in\overline{\mathcal {S}(\Sigma,E_6)}$ represents a
class of surfaces $X_6(x_1,\cdots,x_6)$ with
$x_1+x_6=x_2+x_5=x_3+x_4$. \end{corollary}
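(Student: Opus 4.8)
The plan is to follow verbatim the template already established for the $B_n$, $C_n$ and $G_2$ corollaries (Corollaries~\ref{Bn-bundle}, \ref{Cn-bundle}, \ref{G2-bundle}), feeding in the $E_6$-specific input supplied by Lemma~\ref{F4-f}. First I would take a point $u\in\mathcal{M}_{\Sigma}^{F_4}$, which by Corollary~\ref{fixed-part}(ii) sits inside the fixed part $(\mathcal{M}_{\Sigma}^{E_6})^{Out(E_6)}$ as the connected component containing the trivial $E_6$ bundle. Since $\phi:\overline{\calS(\Sigma,E_6)}\isomap\mathcal{M}_{\Sigma}^{E_6}$ is an isomorphism by \cite{LZ}, applying $\phi^{-1}$ lands $u$ at a well-defined surface class $X_6(x_1,\cdots,x_6)\in\overline{\calS(\Sigma,E_6)}$, where the points $x_i=l_i\cap\Sigma$ are read off from the restriction homomorphism $u(\alpha)=\calO(\alpha)|_\Sigma$.

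Next I would translate the defining condition of the fixed part into a relation among the $x_i$. By Corollary~\ref{fixed-part}(i), after passing to a suitable simple root system the fixed part is cut out by $\rho(x)=x$ on the nose, and Lemma~\ref{F4-f} then reads off precisely $\rho\cdot u=u\Leftrightarrow x_1+x_6=x_2+x_5=x_3+x_4$. Combining this with the identification of $\mathcal{M}_{\Sigma}^{F_4}$ as the trivial-bundle component yields that every $u$ in the $F_4$ moduli corresponds under $\phi^{-1}$ to a surface $X_6(x_1,\cdots,x_6)$ satisfying $x_1+x_6=x_2+x_5=x_3+x_4$, which is the assertion.

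The point requiring the most care is \emph{not} a forced degeneration, as in the earlier non-simply laced cases, but rather recognizing that the $F_4$ condition does not push $\phi^{-1}(u)$ onto the boundary. In the $B_n$, $C_n$, $G_2$ settings the fixity condition carried a torsion factor (e.g. $2x_1=0$ or $n(x_i+x_{2n+1-i})=0$), so selecting the trivial-bundle component forced an incidence ($x_1=0$, $x_i+x_{2n+1-i}=0$) that violated general position and produced a boundary surface. Here, by contrast, the reflections in the relevant simple roots carry no such factor: the conditions $u(\alpha_1)=u(\alpha_6)$ and $u(\alpha_2)=u(\alpha_5)$ unwind to the purely linear relation $x_1+x_6=x_2+x_5=x_3+x_4$, with no torsion constraint. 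Consequently six points in general position can still satisfy it (geometrically, the lines $L_{16},L_{25},L_{34}$ are merely required to be concurrent), so $\phi^{-1}(u)$ remains a genuine $E_6$-surface inside $\calS(\Sigma,E_6)$ and the corollary correctly makes no boundary-point claim. The remaining bookkeeping is to confirm, via Corollary~\ref{fixed-part}, that the Weyl-normalization achieving $\rho(x)=x$ is compatible with the chosen labeling of the $l_i$, so that the resulting incidence is exactly $x_1+x_6=x_2+x_5=x_3+x_4$ and not a Weyl-conjugate permutation of it.
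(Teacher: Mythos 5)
Your proposal is correct and follows essentially the same route as the paper, which states this corollary without a separate proof as an immediate consequence of Lemma~\ref{F4-f}, Corollary~\ref{fixed-part}, and the isomorphism $\phi:\overline{\calS(\Sigma,E_6)}\isomap\mathcal{M}_{\Sigma}^{E_6}$ from \cite{LZ}. One small caveat: your closing claim that $\phi^{-1}(u)$ \emph{remains} a genuine $E_6$-surface in $\calS(\Sigma,E_6)$ holds only for generic $u$ (special $u$ in $\mathcal{M}_{\Sigma}^{F_4}$ can still land on boundary configurations); the accurate point, which you also make, is that the relation $x_1+x_6=x_2+x_5=x_3+x_4$ involves no torsion constraint and therefore does not \emph{force} a boundary point, which is exactly why this corollary, unlike Corollaries~\ref{Bn-bundle}, \ref{Cn-bundle} and \ref{G2-bundle}, asserts no boundary statement.
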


   Denote $S=X_6'(x_1,\cdots,x_6)$  the blow-up of $\bbP^2$ at
$6$  points $x_1,\cdots,x_6$ on $\Sigma$ which satisfies the
condition  $x_1+x_6=x_2+x_5=x_3+x_4$, with corresponding exceptional
classes $l_1,\cdots,l_6$.
 The condition $x_1+x_6=x_2+x_5=x_3+x_4:=p$ implies that the three lines
$L_{16},L_{25}\mbox{ and }L_{34}$ in $\bbP^2$ intersect at one
points $-p\in \Sigma$, where $L_{ij}$ means the line in $\bbP^2$
passing through these two points $x_i$ and $x_j$. So after blowing
up $\mathbb{P}^2$ at $x_i\in\Sigma,1\leq i\leq 6$, the three $(-1)$
curves $h-l_1-l_6,h-l_2-l_5$ and $h-l_3-l_4$ intersect at one points
$-p\in \Sigma$. So they form a special triangle (see
Section~\ref{E6}). As before, we give the following definition.

\begin{definition}\label{F4-config}{\rm An {\it $F_4$-exceptional system} on
$S=X_6'$ is a $6$-tuple $(e_1,\cdots,e_6)$ consisting of $6$
exceptional divisors which are pairwise disjoint, such that
$y_1+y_6=y_2+y_5=y_3+y_4$, where
$\calO_{\Sigma}(y_i)=\calO(e_i)|_{\Sigma}$. And an {\it
$F_4$-configuration} $\zeta_{F_4}=(e_1,\cdots,e_6)$ just means an
$F_4$-exceptional system on $S$ such that we can consider $S$ as a
blow-up of $\bbP^2$ at $6$ points $y_1,\cdots,y_6$ with
corresponding exceptional divisors $e_1,\cdots,e_6$. For
$S=X_6'(x_1,\cdots,x_6)$, when $x_1,\cdots,x_6$ are in general
position, any $F_4$-exceptional system on $S$ consists of
exceptional curves. Such a surface is called an {\it
$F_4$-surface}.}\end{definition}

 So an $F_4$-surface is automatically an $E_6$-surface (namely, a del Pezzo surface of degree $3$).
 And any $F_4$-exceptional system on an $F_4$-surface is always an
 $F_4$-configuration. See Figure 8 for an $F_4$-configuration. \par
 According to the discussions in Section~\ref{E6},
 the Weyl group $W(F_4)$ is the automorphism group of the sub-root
system of type $D_4$ with simple roots
$\{l_1-l_6,l_2-l_5,l_3-l_4,h-l_1-l_2-l_3\}$, and $W(F_4)\cong
S_3\rtimes W(D_4)$. Therefore we have

\begin{lemma}\label{W(F4)action} (i) Let $S=X_6'$ be a surface with an
$F_4$-configuration. Then the Weyl group $W(F_4)$ acts on all
$F_4$-exceptional systems on $S$ simply transitively.\par (ii)
Moreover, if $S$ is an $F_4$-surface, then the Weyl group $W(F_4)$
acts on all $F_4$-configurations on $S$ simply transitively.\end{lemma}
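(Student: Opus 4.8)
The plan is to reduce both statements to the transitivity of the $W(E_6)$-action on the set of $E_6$-exceptional systems, which is already available from the discussion in Section~\ref{E6}, and then cut down to the $W(F_4)$-orbit using the triangle $\overrightarrow{\Delta}_0$. As noted, it suffices to prove (i); statement (ii) follows because an $F_4$-surface is in particular an $E_6$-surface, so every $F_4$-exceptional system consists of genuine exceptional curves and is therefore automatically an $F_4$-configuration, whence the two notions coincide and (i) applies verbatim.

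For (i), first I would recall from Section~\ref{E6} that the three pairwise-intersecting $(-1)$-curves $h-l_1-l_6$, $h-l_2-l_5$, $h-l_3-l_4$ form the special triangle $\Delta_0$, and that by the Remark the stabilizer in $W(E_6)$ of the unordered triangle $\Delta_0$ is exactly $W(F_4)\cong S_3\rtimes W(D_4)$, while the stabilizer of the ordered triangle $\overrightarrow{\Delta}_0$ is $W(D_4)$. The next step is to show that an $E_6$-exceptional system $(e_1,\dots,e_6)$ on $S=X_6'$ is an $F_4$-exceptional system precisely when it is compatible with this triangle, i.e.\ when the condition $y_1+y_6=y_2+y_5=y_3+y_4$ holds. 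Geometrically this condition says that the three $(-1)$-curves $e_1+e_6+K$, $e_2+e_5+K$, $e_3+e_4+K$ (equivalently the curves through the pairs $\{e_1,e_6\}$, etc.) meet at a common point of $\Sigma$, which is exactly the incidence defining the triangle $\overrightarrow{\Delta}_0$ up to the $W(E_6)$-transformation carrying $(l_1,\dots,l_6)$ to $(e_1,\dots,e_6)$. Thus the $F_4$-exceptional systems are exactly the images of the reference system under elements of $W(E_6)$ that preserve the triangle, namely under $W(F_4)$.

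Concretely, I would argue as follows. Since $W(E_6)$ acts simply transitively on all $E_6$-exceptional systems, for any two $F_4$-exceptional systems $\zeta,\zeta'$ there is a unique $w\in W(E_6)$ with $w(\zeta)=\zeta'$; it remains to check $w\in W(F_4)$. The defining incidence $x_1+x_6=x_2+x_5=x_3+x_4$ attached to $\zeta$ (and to $\zeta'$) encodes exactly that $\zeta$ determines the fixed triangle $\Delta_0$, so $w$ carries $\Delta_0$ to $\Delta_0$ and hence lies in the stabilizer $W(F_4)$. Conversely any $w\in W(F_4)$ preserves the triangle and therefore sends one $F_4$-exceptional system to another, so $W(F_4)$ does act on this set. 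Transitivity is then immediate, and simplicity follows from the simple transitivity of $W(E_6)$ on the larger set: the stabilizer in $W(F_4)$ of $\zeta$ is contained in the stabilizer in $W(E_6)$, which is trivial.

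The main obstacle I anticipate is the faithful translation between the \emph{lattice/root-theoretic} description of the $F_4$-condition (that $w$ fixes the triangle $\Delta_0$, equivalently preserves $\Lambda_c(F_4)=\Lambda(D_4)$) and the \emph{pointwise/geometric} description on $\Sigma$ (that $y_1+y_6=y_2+y_5=y_3+y_4$). One must verify that these two conditions match exactly and not merely up to the ambiguity of how $W(E_6)$ permutes the six classes within the triangle's three pairs; this is precisely where the distinction between the unordered stabilizer $W(F_4)$ and the ordered stabilizer $W(D_4)$ enters, and where a careful bookkeeping of which elements of $W(E_6)$ preserve the incidence is required. Once this dictionary is in place, everything else is a formal consequence of the simple transitivity of $W(E_6)$ recorded in Section~\ref{E6}.
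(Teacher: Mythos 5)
Your strategy coincides with the paper's: the paper gives no proof of this lemma beyond the sentence preceding it, which identifies $W(F_4)\cong S_3\rtimes W(D_4)$ with the automorphism group of the $D_4$ sub-root system, i.e.\ (by the remarks in Section~\ref{E6}) with the stabilizer in $W(E_6)$ of the triangle $\Delta_0=\{h-l_1-l_6,\,h-l_2-l_5,\,h-l_3-l_4\}$; your reduction of (ii) to (i), your transporter argument, and the derivation of simplicity from simple transitivity of $W(E_6)$ are exactly the intended fleshing-out of that sentence. But as a proof, your argument has a genuine gap, and it sits precisely at the step you yourself flag: the assertion that the incidence $y_1+y_6=y_2+y_5=y_3+y_4$ ``encodes exactly that $\zeta$ determines the fixed triangle $\Delta_0$.'' What the incidence actually says is that the triangle $\Delta_\zeta$ attached to $\zeta$ --- whose edges are $h'-e_1-e_6$, $h'-e_2-e_5$, $h'-e_3-e_4$ with $h'=\frac{1}{3}(-K+\sum_i e_i)$; your classes $e_i+e_j+K$ have square $-3$ and are not these edges --- has its three edges concurrent at a point of $\Sigma$. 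To conclude that the transporter $w$ lies in $W(F_4)$ you need $\Delta_\zeta=\Delta_0$, i.e.\ that $\Delta_0$ is the \emph{unique} one of the $45$ triangles whose edges are concurrent on $\Sigma$. That is not formal bookkeeping, and without a genericity hypothesis it is false.

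Concretely: let $\tau\in\Sigma$ be a nonzero $3$-torsion point, $a,q\in\Sigma$ generic, and blow up $x_1=a$, $x_2=q+\tau-a$, $x_3=a+\tau$, $x_4=q-a-\tau$, $x_5=a-\tau$, $x_6=q-a$. Then $x_1+x_6=x_2+x_5=x_3+x_4=q$, so $(l_1,\dots,l_6)$ is an $F_4$-configuration and $S$ is a smooth cubic surface, hence an $F_4$-surface under the paper's stated criterion; but also $x_1+x_2=x_3+x_6=x_4+x_5=q+\tau$ (using $3\tau=0$), so $(l_1,l_3,l_4,l_5,l_6,l_2)$ is a second $F_4$-exceptional system, with triangle $\{h-l_1-l_2,\,h-l_3-l_6,\,h-l_4-l_5\}\neq\Delta_0$. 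The unique element of $W(E_6)$ carrying the first system to the second fixes $h$ and $l_1$ and cycles $l_2\mapsto l_3\mapsto l_4\mapsto l_5\mapsto l_6\mapsto l_2$; it sends $l_1-l_6$ to $l_1-l_2\notin\Lambda(D_4)$, so it is not in $W(F_4)$, and simple transitivity fails. Worse, on such a surface $W(F_4)$ does not even preserve the set of $F_4$-exceptional systems (apply $S_{l_1-l_6}\in W(F_4)$ to the second system: the incidence condition is destroyed for generic $a,q$), which also undercuts your ``conversely'' sentence. So to complete the proof you must (a) make explicit a genericity assumption on $x_1,\dots,x_6$, beyond the del Pezzo condition, ruling out such torsion coincidences, and (b) under that assumption actually prove uniqueness of the concurrent triangle, e.g.\ by running through the $45$ triangles and checking that concurrency of any second one forces a torsion relation among the $x_i$. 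The same caveat applies to the paper's own unproved statement, but your write-up makes the triangle dictionary the load-bearing step, and it cannot stand without this.
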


 \begin{proposition}\label{F4-moduli} Let $\mathcal {S}(\Sigma,F_4)$ be the moduli space
of pairs $(X_6',\Sigma)$ where $X_6'$ is an $F_4$-surface containing
$\Sigma$ as an anti-canonical curve, and
$\mathcal{M}_{\Sigma}^{F_4}$ be the moduli space of flat $F_4$
bundles over $\Sigma$. Then we have \par

(i) $\mathcal {S}(\Sigma,F_4)$ is embedded into
$\mathcal{M}_{\Sigma}^{F_4}$ as an open dense subset. \par

(ii) Moreover, this embedding can be extended naturally to an
isomorphism $$\overline{\mathcal {S}(\Sigma,F_4)}\cong
\mathcal{M}_{\Sigma}^{F_4},$$ by including all  rational
surfaces with $F_4$-configurations.
\end{proposition}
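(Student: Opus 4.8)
The plan is to run exactly the three-step argument that proved the $B_n$, $C_n$ and $G_2$ cases (Propositions~\ref{Bn-moduli}, \ref{Cn-moduli}, \ref{G2-moduli}), now fed by the $F_4$-specific inputs Corollary~\ref{F4-bundle} and Lemma~\ref{W(F4)action}. First I would record the target: as in \cite{LZ} we have $\mathcal{M}_{\Sigma}^{F_4}\cong \Lambda_c(F_4)\otimes_{\bbZ}\Sigma/W(F_4)$, and after fixing the primitive cube root of $Jac(\Sigma)\cong\Sigma$ inherited from the $E_6$ normalization (recall $d=9-6=3$) this is identified with $Hom(\Lambda(F_4),\Sigma)/W(F_4)$. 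The restriction of line bundles from $S=X_6'$ to $\Sigma$ then defines a map $\phi:\mathcal{S}(\Sigma,F_4)\to Hom(\Lambda(F_4),\Sigma)/W(F_4)$, and this map is well-defined precisely because, by Lemma~\ref{W(F4)action}, choosing and fixing an $F_4$-configuration on $S$ is the same as choosing and fixing a system of simple roots $\Delta(F_4)$; the remaining ambiguity is exactly the simply transitive $W(F_4)$-action, which is quotiented out on the target.

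The computational heart is to show $\phi$ is injective by writing down an explicit simple root system and solving the resulting linear system over $\Sigma$. Following Section~\ref{non-simply->simply} and the labeling of Section~\ref{E6}, I take the simple roots of $F_4$ to be the $\rho$-symmetrizations of the $E_6$ simple roots, namely $\beta_1=\frac{1}{2}(\alpha_1+\alpha_6)$, $\beta_2=\frac{1}{2}(\alpha_2+\alpha_5)$, $\beta_3=\alpha_3=h-l_1-l_2-l_3$ and $\beta_4=\alpha_4=l_3-l_4$. The point of symmetrizing is that for a $\rho$-invariant $u$ Lemma~\ref{F4-f} gives $u(\alpha_1)=u(\alpha_6)$ and $u(\alpha_2)=u(\alpha_5)$, so no genuine halving occurs and the restriction produces the honest system
$$\left\{\begin{array}{l} p_1=x_1-x_2,\\ p_2=x_2-x_3,\\ p_3=-(x_1+x_2+x_3),\\ p_4=x_3-x_4,\end{array}\right.$$
where $p_i=u(\beta_i)$ and I use the del Pezzo convention $\calO(h)|_{\Sigma}=\calO(3(0))$. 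The first three equations give $3x_2=p_2-p_1-p_3$; the fixed cube root of $Jac(\Sigma)$ resolves this division by $3$ uniquely, after which $x_1,x_3$, then $x_4$, and finally $p=x_3+x_4$ together with $x_5=p-x_2$, $x_6=p-x_1$ are all determined. Hence the configuration is recovered uniquely from $u$ and $\phi$ is injective.

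Finally I would assemble the global statement. Statement (i) follows because $F_4$-surfaces are exactly those $X_6'$ whose six points lie in general position, an open dense condition inside the locus $x_1+x_6=x_2+x_5=x_3+x_4$, so $\phi$ embeds $\mathcal{S}(\Sigma,F_4)$ as an open dense subset. For (ii), Corollary~\ref{F4-bundle} shows that every $u\in\mathcal{M}_{\Sigma}^{F_4}\subset(\mathcal{M}_{\Sigma}^{E_6})^{Out(E_6)}$ is realized by a possibly degenerate surface $X_6(x_1,\cdots,x_6)$ with $x_1+x_6=x_2+x_5=x_3+x_4$, i.e. a rational surface with an $F_4$-configuration, while the solvability just established shows conversely that every such configuration occurs. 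Combining this with the $E_6$-isomorphism $\phi:\overline{\calS(\Sigma,E_6)}\isomap\mathcal{M}_{\Sigma}^{E_6}$ of \cite{LZ} extends the embedding to an isomorphism $\overline{\mathcal{S}(\Sigma,F_4)}\cong\mathcal{M}_{\Sigma}^{F_4}$.

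I expect the main obstacle to be the middle step, namely pinning down the correct simple root system and the exact normalization so that the restriction system is genuinely uniquely solvable. The delicate points are that $\rho$-invariance is what removes the apparent factors of $\frac{1}{2}$, so that the only true indeterminacy is the division by $3$, and that this division by $3$ is exactly absorbed by the fixed primitive cube root of $Jac(\Sigma)$ supplied by the $E_6$ theory through Corollary~\ref{fixed-part}. Getting the bookkeeping of these torsion normalizations right, and matching "general position'' with "$F_4$-surface'' and the boundary with degenerate $F_4$-configurations, is where the argument could most easily go wrong.
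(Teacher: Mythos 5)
Your proposal follows the paper's proof skeleton exactly -- the same three-step $B_n$-style argument (identify $\mathcal{M}_{\Sigma}^{F_4}$ with $Hom(\Lambda(F_4),\Sigma)/W(F_4)$, get well-definedness of $\phi$ from Lemma~\ref{W(F4)action}, prove injectivity by an explicit simple root system and a linear system over $\Sigma$, then extend via Corollary~\ref{F4-bundle}) -- but it diverges from the paper at the computational core, namely the choice of simple roots. The paper takes the \emph{integral} classes $\beta_1=l_1-l_2+l_5-l_6$, $\beta_2=l_2-l_3+l_4-l_5$, $\beta_3=2(h-l_1-l_2-l_3)$, $\beta_4=2(l_3-l_4)$ (i.e.\ $\alpha_1+\alpha_6$, $\alpha_2+\alpha_5$, $2\alpha_3$, $2\alpha_4$), so that each $p_i=u(\beta_i)$ is literally the restriction of a line bundle on $S$; the price is a $6\times 6$ system (four restriction equations plus the two constraints $x_1+x_6=x_2+x_5=x_3+x_4$) whose determinant works out to $\pm 48$, and the proof concludes by noting the determinant is nonzero and invoking the $B_n$ argument. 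You instead take the folded half-roots $\frac{1}{2}(\alpha_1+\alpha_6)$, $\frac{1}{2}(\alpha_2+\alpha_5)$, $\alpha_3$, $\alpha_4$; these are \emph{not} divisor classes on $S$, so your $p_i$ cannot be defined as a line-bundle restriction and must instead be defined through the $\rho$-invariance identities $u(\alpha_1)=u(\alpha_6)$, $u(\alpha_2)=u(\alpha_5)$ of Lemma~\ref{F4-f} -- a legitimate but genuinely different bookkeeping, which you should state as a definition rather than as ``no genuine halving occurs.'' What your choice buys is an almost triangular system whose only indeterminacy is the single relation $3x_2=p_2-p_1-p_3$, matching the $d=9-6=3$ normalization inherited from the $E_6$ theory; what the paper's choice buys is a period map defined purely by restricting honest line bundles, with no appeal to invariance. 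Note that both arguments wave their hands at the same spot: neither division by $3$ nor a determinant of $48$ is injective on an elliptic curve (solutions differ by torsion, e.g.\ translating all six points by a common $3$-torsion point leaves every $p_i$ unchanged), and both you and the paper discharge this ambiguity onto the normalization fixed in \cite{LZ} rather than resolving it explicitly -- your ``fixed cube root resolves the division by $3$'' is no more and no less justified than the paper's ``since the determinant is non-zero, the result follows by the same argument as in the $B_n$ case.'' So your proof is correct at the paper's own level of rigor, with a different, and arguably cleaner, coordinate choice at the decisive step.
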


\begin{proof} Firstly, we can take the simple root system of $F_4$ as
$$\Delta(F_4)=\{\beta_1,\beta_2,\beta_3,\beta_4\},$$ where $\beta_1=l_1-l_2+l_5-l_6$, $\beta_2=l_2-l_3+l_4-l_5$,
$\beta_3=2(h-l_1-l_2-l_3)$, and $\beta_4=2(l_3-l_4)$.\par
  Secondly, the restriction to $\Sigma$ induces the following system of linear equations:
  $$\left \{ \begin{array}{l}  x_1-x_2+x_5-x_6=p_1,\\
       x_2-x_3+x_4-x_5=p_2,\\
       2(-x_1-x_2-x_3)=p_3,\\
       2(x_3-x_4)=p_4,\\
       x_1+x_6=x_2+x_5=x_3+x_4.
       \end{array}
\right. $$
  Since the determinant is non-zero, the result follows by the same
argument as in $B_n$ case. \end{proof}

   The Lie algebra bundle of type $F_4$ over $X_6'$ can be
   constructed as (for brevity, we denote
   $\varepsilon_1=l_2-l_3+l_4-l_5$, $\varepsilon_2=l_2+l_3-l_4-l_5$, $\varepsilon_3=2h-2l_1-l_2-l_3-l_4-l_5$, and
   $\varepsilon_4=2h-2l_6-l_2-l_3-l_4-l_5$)

     $$\mathscr{F}_4=\mathcal {O}^{\bigoplus 4}\bigoplus_{D\in R(F_4)}\mathcal{O}(D),$$
where $R(F_4)$ is the root system of $F_4$:
$$R(F_4)=\{\pm \varepsilon_i,\ \pm (\varepsilon_i\pm \varepsilon_j),\ \pm {1\over 2}(\varepsilon_1
\pm \varepsilon_2\pm \varepsilon_3\pm \varepsilon_4)\ |\ i\neq j\}.$$

{\bf Remark} {\rm The $27$ lines determine the $27$-dimensional
fundamental representation of $E_6$. Restricted to $\Sigma$, they
give us a representation bundle of rank $27$ (of $\mathscr{F}_4$)
over $\Sigma$. The weights associated to the $3$ special lines
$h-l_1-l_6,h-l_2-l_5,h-l_3-l_4$ restrict to zero and these $3$
weights add to zero before restriction (since
$(h-l_1-l_6)+(h-l_2-l_5)+(h-l_3-l_4)=-K$). The remaining $24$
weights associated to other $24$ lines restrict to the $24$ short
roots of $\mathscr{F}_4$. The $24$ lines  and a rank $2$ bundle $V$
determine the $26$-dimensional irreducible fundamental
representation $U$ of $\mathscr{F}_4$. Here $V$ is determined as
follows. Since
$\mathcal{O}_{\Sigma}(h-l_1-l_6)=\mathcal{O}_{\Sigma}(h-l_2-l_5)=\mathcal{O}_{\Sigma}(h-l_3-l_4)=\mathcal{O}_{\Sigma}((-p))$,
taking the trace, we have the following exact sequence:
$$0\rightarrow ker(tr)\rightarrow \mathcal{O}_{\Sigma}((-p))^{\bigoplus 3}\rightarrow \mathcal{O}_{\Sigma}((-p))\rightarrow 0.$$
Then we take $V=ker(tr)$.}

   For more details on the $26$-dimensional fundamental representation of $F_4$, one can consult \cite{Adams}.

\section{Conclusion}\label{end}

   We summarize our results in \cite{LZ} and this paper as follows.  Let $\Sigma$ be a fixed elliptic curve with
   identity $0\in \Sigma$. Let $G$ be any compact, simple and simply connected Lie groups, simply laced or not.
    Denote $\calS(\Sigma,G)$ the moduli space of $G$-surfaces containing a fixed anti-canonical curve $\Sigma$.
   Denote $\calM_{\Sigma}^{G}$ the moduli space of flat $G$ bundles over $\Sigma$. Then we have

\begin{theorem}(i) We can construct Lie algebra $Lie(G)$-bundles over
each $G$-surface.\par
        (ii) The restriction of these Lie algebra bundles to the anti-canonical curve $\Sigma$ induces an embedding of
 $\calS(\Sigma,G)$  into $\calM_{\Sigma}^{G}$   as an open dense subset.\par
       (iii) This embedding can be extended to an isomorphism from $\overline{\calS(\Sigma,G)}$
 onto $\calM_{\Sigma}^{G}$, where $\overline{\calS(\Sigma,G)}$ is a natural and explicit compactification
 of $\calS(\Sigma,G)$, by including all  rational surfaces
 with $G$-configurations.\end{theorem}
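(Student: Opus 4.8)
The plan is to deduce this concluding theorem by assembling the case-by-case results already established, organized according to the seven types listed in Section~\ref{non-simply->simply}. First I would split $G$ into the simply laced types $A_n,D_n,E_n$ and the non-simply laced types $B_n,C_n,F_4,G_2$. For the simply laced types all three assertions are precisely the main theorem of \cite{LZ}, so nothing new is needed there. For the non-simply laced types, parts (ii) and (iii) are exactly Propositions~\ref{Bn-moduli}, \ref{Cn-moduli}, \ref{G2-moduli} and \ref{F4-moduli}, while part (i) is furnished by the explicit bundles $\mathscr{B}_n$, $\mathscr{C}_n$, $\mathscr{G}_2$, $\mathscr{F}_4$ constructed in the respective subsections. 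Thus the proof amounts to checking that these four propositions, taken together, exhaust every non-simply laced $G$ and deliver statements (i)--(iii) in the uniform form asserted here.

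For part (i) I would point to the common construction: in each case one sets $Lie(G)=\calO^{\oplus r(G)}\bigoplus_{D\in R(G)}\calO(D)$, where $R(G)$ is realized as a set of divisor classes on the surface via the root-system descriptions of Section~\ref{non-simply->simply}, and one equips the fibers with the Chevalley--Serre relations. The verification carried out in the $B_n$ subsection---that relations (a)--(d) are preserved under a change of trivialization because the transition functions of $\calO(\alpha)$, $\calO(-\alpha)$ and $\calO(\alpha+\beta)$ multiply compatibly---is formally identical in the $C_n$, $G_2$ and $F_4$ cases. Hence part (i) reduces to this single trivialization-independence check, together with the $ADE$ constructions of \cite{LZ}.

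For parts (ii) and (iii) the uniform mechanism, modeled on the proof of Proposition~\ref{Bn-moduli}, is: (1) identify $\calM_{\Sigma}^{G}\cong Hom(\Lambda(G),\Sigma)/W(G)$ after fixing a root of $Jac(\Sigma)$; (2) use the simply transitive action of $W(G)$ on $G$-configurations (Lemmas~\ref{W(Bn)action}, \ref{W(Cn)action}, \ref{W(G2)action}, \ref{W(F4)action}) to see that the restriction map $\phi$ is well defined, since fixing a configuration is the same as fixing a system of simple roots; (3) prove $\phi$ injective by solving the explicit linear system obtained by restricting a chosen simple root system to $\Sigma$, whose coefficient matrix has nonzero determinant; and (4) extend $\phi$ to an isomorphism on the compactification using the boundary-point descriptions in Corollaries~\ref{Bn-bundle}, \ref{Cn-bundle}, \ref{G2-bundle}, \ref{F4-bundle} and the surjectivity provided by solvability of that same linear system.

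I expect the genuine obstacle, were one to prove the non-simply laced cases from scratch rather than quote them, to lie in matching the two moduli descriptions correctly at the level of connected components. The delicate input is Corollary~\ref{fixed-part}, which identifies $\calM_{\Sigma}^{G}$ with the particular component of $(\calM_{\Sigma}^{G'})^{Out(G')}$ containing the trivial $G'$-bundle; this rests on the injectivity of $\chi$ in Lemma~\ref{inv-component}, i.e. on the fact that $W(G')$-conjugacy of lattice-valued points forces $W(G)$-conjugacy. Once that component identification is secured, the remainder is the routine linear algebra of the restriction maps, and the theorem follows by collecting the four non-simply laced propositions with the simply laced results of \cite{LZ}.
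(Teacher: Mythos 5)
Your proposal matches the paper's own treatment: this concluding theorem is proved there simply by assembling the simply laced results of \cite{LZ} with Propositions~\ref{Bn-moduli}, \ref{Cn-moduli}, \ref{G2-moduli}, \ref{F4-moduli} and the explicit bundle constructions $\mathscr{B}_n$, $\mathscr{C}_n$, $\mathscr{G}_2$, $\mathscr{F}_4$, exactly as you describe. Your additional remarks on the uniform restriction-map mechanism and on the role of Lemma~\ref{inv-component} and Corollary~\ref{fixed-part} correctly reflect how those cited propositions were established.
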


   {\bf Remark}  {\rm (i) The result is known for $G=E_n (n=6,7,8)$ case (see \cite{Donagi}\cite{Donagi2}\cite{FMW1}).\par
   (ii) We have mentioned in the beginning of  $\S$ 1 that there is another reduction of the
   non-simply laced cases to simply laced cases. In fact, using this reduction, we will obtain the same result,
   just following the steps as above.}

   According to a theorem of
   \cite{Looijenga}, the moduli space
   $\calS(\Sigma,G)$ is a weighted projective space. Thus the compactification
   $\overline{\calS(\Sigma,G)}$ is a weighted projective space.
   Conversely, we believe that the above identification between $\calS(\Sigma,G)$ and
   $\overline{\calS(\Sigma,G)}$ will give us another proof for
   this theorem. This is already done in $E_n$ case by
   \cite{Donagi}\cite{Donagi2}\cite{FMW1} and so on.\\

\noindent{\bf Acknowledgements}. The second author would like to
express his gratitude to the Institute of Mathematics of Johannes
Gutenberg University Mainz, Germany and Kang Zuo for their support.
 He would also like to express his gratitude to Changzheng Li and Xiaowei Wang.

\end{document}